\theoremstyle{plain}
\newtheorem{thm}{Theorem}[section]
\newtheorem{cor}[thm]{Corollary}
\newtheorem{lem}[thm]{Lemma}
\newtheorem{prop}[thm]{Proposition}
\newtheorem{claim}[thm]{Claim}
\theoremstyle{definition}
\newtheorem{defi}[thm]{Definition}
\theoremstyle{remark}
\newtheorem{rem}[thm]{Remark}
\numberwithin{equation}{section}
\newcommand{\average}{{\mathchoice {\kern1ex\vcenter{\hrule height.4pt
width 6pt depth0pt} \kern-9.7pt} {\kern1ex\vcenter{\hrule
height.4pt width 4.3pt depth0pt} \kern-7pt} {} {} }}
\def\R{\mathbb{R}}
\begin{document}

\title[The Dirichlet problem for the fractional Laplacian]{The Dirichlet problem for the fractional Laplacian: regularity up to the boundary}

\author{Xavier Ros-Oton}

\address{Universitat Polit\`ecnica de Catalunya, Departament de Matem\`{a}tica  Aplicada I, Diagonal 647, 08028 Barcelona, Spain}
\email{xavier.ros.oton@upc.edu}

\thanks{The authors were supported by grants MTM2008-06349-C03-01, MTM2011-27739-C04-01 (Spain), and 2009SGR345 (Catalunya)}

\author{Joaquim Serra}

\address{Universitat Polit\`ecnica de Catalunya, Departament de Matem\`{a}tica  Aplicada I, Diagonal 647, 08028 Barcelona, Spain}

\email{joaquim.serra@upc.edu}

\keywords{Fractional Laplacian, Dirichlet problem, regularity, boundary Harnack inequality}

\maketitle

\begin{abstract} We study the regularity up to the boundary of solutions to the Dirichlet problem for the fractional Laplacian. We prove that if $u$ is a solution of
$(-\Delta)^s u =g$ in $\Omega$,
$u\equiv0$ in $\mathbb R^n\backslash\Omega$,
for some $s\in(0,1)$ and $g\in L^\infty(\Omega)$,
then $u$ is $C^s(\R^n)$ and $u/\delta^s|_{\Omega}$ is $C^{\alpha}$ up to the boundary $\partial\Omega$ for some $\alpha\in(0,1)$, where $\delta(x)={\rm dist}(x,\partial\Omega)$.
For this, we develop a fractional analog of the Krylov boundary Harnack method.

Moreover, under further regularity assumptions on $g$ we obtain higher order H\"older estimates for $u$ and $u/\delta^s$. Namely, the $C^\beta$ norms of $u$ and $u/\delta^s$ in the sets $\{x\in\Omega:\delta(x)\geq\rho\}$ are controlled by $C\rho^{s-\beta}$ and $C\rho^{\alpha-\beta}$, respectively.

These regularity results are crucial tools in our proof of the Pohozaev identity for the fractional Laplacian \cite{RS-CRAS,RS}.
\end{abstract}

\section{Introduction and results}

Let $s\in(0,1)$ and $g\in L^\infty(\Omega)$, and consider the fractional elliptic problem
\begin{equation}\label{eqlin}
\left\{ \begin{array}{rcll} (-\Delta)^s u &=&g&\textrm{in }\Omega \\
u&=&0&\textrm{in }\mathbb R^n\backslash\Omega,\end{array}\right.
\end{equation}
in a bounded domain $\Omega\subset\mathbb R^n$, where
\begin{equation}
\label{laps}(-\Delta)^s u (x)= c_{n,s}{\rm PV}\int_{\R^n}\frac{u(x)-u(y)}{|x-y|^{n+2s}}dy
\end{equation}
and $c_{n,s}$ is a normalization constant.

Problem \eqref{eqlin} is the Dirichlet problem for the fractional Laplacian.
There are classical results in the literature dealing with the interior regularity of $s$-harmonic functions, or more generally for equations of the type \eqref{eqlin}.
However, there are few results on regularity up to the boundary. This is the topic of study of the paper.

Our main result establishes the H\"older regularity up to the boundary $\partial\Omega$ of the function $u/\delta^s|_\Omega$, where
\[\delta(x)={\rm dist}(x,\partial\Omega).\]
For this, we develop an analog of the Krylov \cite{Krylov} boundary Harnack method for problem \eqref{eqlin}.
As in Krylov's work, our proof applies also to operators with ``bounded measurable coefficients'',
more precisely those of the type \eqref{bmc}.
This will be treated in a future work \cite{RS-K}.
In this paper we only consider the constant coefficient operator $(-\Delta)^s$, since in this case we
can establish more precise regularity results.
Most of them will be needed in our subsequent work \cite{RS}, where we find and prove the
Pohozaev identity for the fractional Laplacian, announced in \cite{RS-CRAS}.
For \eqref{eqlin}, in addition to the H\"older regularity up to the boundary for $u/\delta^s$, we prove that any solution $u$ is $C^s(\R^n)$.
Moreover, when $g$ is not only bounded but H\"older continuous, we obtain better interior H\"older estimates for $u$ and $u/\delta^s$.

The Dirichlet problem for the fractional Laplacian \eqref{eqlin} has been studied from the point of view of probability, potential theory, and PDEs.
The closest result to the one in our paper is that of Bogdan \cite{B}, establishing a boundary Harnack inequality for nonnegative $s$-harmonic functions. It will be described in more detail later on in the Introduction (in relation with Theorem \ref{thm:v-is-Calpha}).
Related regularity results up to the boundary have been proved in \cite{KL} and \cite{CRS}.
In \cite{KL} it is proved that $u/\delta^s$ has a limit at every boundary point when $u$ solves the homogeneous fractional heat equation.
The same is proven in \cite{CRS} for a free boundary problem for the fractional Laplacian.

Some other results dealing with various aspects concerning the Dirichlet problem are the following:
estimates for the heat kernel (of the parabolic version of this problem) and for the Green function, e.g.,  \cite{BGR,CKS};
an explicit expression of the Poisson kernel for a ball \cite{L};
and the explicit solution to problem \eqref{eqlin} in a ball for $g\equiv1$ \cite{G}.
In addition, the interior regularity theory for viscosity solutions to nonlocal equations with ``bounded measurable coefficients'' is developed in \cite{CS}.

The first result of this paper gives the optimal H\"older regularity for a solution $u$ of \eqref{eqlin}.
The proof, which is given in Section \ref{sec3}, is based on two ingredients: a suitable upper barrier, and the  interior regularity results for the fractional Laplacian.
Given $g\in L^\infty(\Omega)$, we say that $u$ is a solution of \eqref{eqlin} when $u\in H^s(\R^n)$ is a weak  solution (see Definition \ref{weak}). When $g$ is continuous, the notions of weak solution and of viscosity solution agree; see Remark \ref{remviscosity}.

We recall that a domain $\Omega$ satisfies the exterior ball condition if there exists a positive radius $\rho_0$ such that all the points on $\partial \Omega$ can be touched by some exterior ball of radius $\rho_0$.

\begin{prop}\label{prop:u-is-Cs}
Let $\Omega$ be a bounded Lipschitz domain satisfying the exterior ball condition, $g\in L^\infty(\Omega)$, and $u$ be a solution of \eqref{eqlin}. Then, $u\in C^s(\R^n)$ and
\[\|u\|_{C^{s}(\R^n)}\le C \|g\|_{L^\infty (\Omega)},\]
where $C$ is a constant depending only on $\Omega$ and $s$.
\end{prop}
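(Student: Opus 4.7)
The plan is to combine a pointwise boundary decay bound $|u(x)| \le C\|g\|_{L^\infty(\Omega)}\delta(x)^s$, obtained from a barrier argument, with the known interior regularity of $(-\Delta)^s$. These two ingredients are then patched via a dichotomy on $|x-y|$ versus $\delta(x)$.

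\emph{Pointwise boundary bound.} For each $x_0\in\partial\Omega$ use the exterior ball condition to find a tangent exterior ball $B_{\rho_0}(y_0)\subset\R^n\setminus\Omega$, and take as barrier a scaled and truncated version of
\[
\phi_{y_0}(x) \;=\; \bigl((|x-y_0|^2-\rho_0^2)_+\bigr)^{s},
\]
which vanishes on $B_{\rho_0}(y_0)$. Using the one-dimensional identity $(-\Delta)^s\bigl((t_+)^s\bigr)=0$ for $t>0$ together with the rotational symmetry of $\phi_{y_0}$, an explicit computation (truncating $\phi_{y_0}$ outside a large ball containing $\Omega$ to handle the nonlocal tail) shows $(-\Delta)^s\phi_{y_0} \ge c_0>0$ throughout $\Omega$. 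The weak comparison principle applied to $\pm u$ and $(\|g\|_{L^\infty}/c_0)\phi_{y_0}$ gives $|u|\le C\|g\|_{L^\infty}\phi_{y_0}$ in $\Omega$. For any $x\in\Omega$, choosing $x_0$ to be a closest boundary point, so that $|x-x_0|=\delta(x)$ and $\phi_{y_0}(x)\le C\delta(x)^s$, yields
\[
|u(x)| \le C\|g\|_{L^\infty(\Omega)}\,\delta(x)^s \qquad \text{for every } x\in\R^n.
\]

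\emph{Interior rescaling.} Fix $x_0\in\Omega$, set $d=\delta(x_0)$, and let $v(y)=u(x_0+(d/2)y)$. Then $(-\Delta)^s v = (d/2)^{2s}\,g(x_0+(d/2)y)$ in $B_1$; the boundary bound above gives $\|v\|_{L^\infty(B_1)}\le Cd^s\|g\|_{L^\infty}$ and the tail control $|v(y)|\le Cd^s\|g\|_{L^\infty}(1+|y|)^s$ for $|y|>1$. The standard interior $C^s$ estimate for the fractional Laplacian (e.g.\ from \cite{CS}) then produces $[v]_{C^s(B_{1/2})} \le Cd^s\|g\|_{L^\infty}$; undoing the rescaling gives
\[
[u]_{C^s(B_{d/4}(x_0))} \le C\|g\|_{L^\infty(\Omega)},
\]
with constant independent of $d$.

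\emph{Patching and main obstacle.} For $x,y\in\R^n$ assume $\delta(x)\ge\delta(y)$. If $|x-y|\ge\delta(x)/4$, the boundary bound gives $|u(x)-u(y)| \le C\|g\|_{L^\infty}(\delta(x)^s+\delta(y)^s) \le C\|g\|_{L^\infty}|x-y|^s$ (using $\delta(x)\le 4|x-y|$). If instead $|x-y|<\delta(x)/4$, then $y\in B_{\delta(x)/4}(x)\subset\Omega$ and the interior estimate at $x$ gives the same bound. The $L^\infty$ bound is immediate from the pointwise decay and boundedness of $\Omega$. The only nontrivial step is producing the barrier: one must compute $(-\Delta)^s$ of $((|x-y_0|^2-\rho_0^2)_+)^s$ and verify both the lower bound $\ge c_0>0$ on $\Omega$ and the upper bound $\le C\delta^s$, with the truncation-at-infinity adjustment needed to tame the nonlocal tail being the delicate part. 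Once that barrier is in hand the remaining rescaling and dichotomy arguments are routine.
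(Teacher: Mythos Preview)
Your strategy---barrier for $|u|\le C\|g\|_{L^\infty}\delta^s$, then interior rescaling, then a dichotomy on $|x-y|$ versus $\delta(x)$---is exactly the paper's strategy, and your patching argument is in fact simpler than the one in the paper (which flattens the boundary and uses a dyadic chain where your direct two-case split already suffices).

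The one soft spot is the barrier. Your heuristic ``$(-\Delta)^s((t_+)^s)=0$ in 1-D plus rotational symmetry implies $(-\Delta)^s\phi_{y_0}\ge c_0$'' does not actually do the job: the 1-D identity concerns $(t_+)^s$ on a half-line, whereas $\bigl((|x-y_0|^2-\rho_0^2)_+\bigr)^s$ restricted to a radial line is $(t^2-\rho_0^2)_+^s$, not $(t-\rho_0)_+^s$, and in $\R^n$ the radial fractional Laplacian picks up curvature terms that are not captured by the 1-D computation. Moreover, without truncation this function grows like $|x|^{2s}$ at infinity, so $(-\Delta)^s\phi_{y_0}$ is not even defined; after truncation one must still produce the positive lower bound on all of $\Omega$ uniformly in the tangent point $x_0$, and this is a genuine computation, not a consequence of the 1-D fact. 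The paper sidesteps this by building its supersolution $\varphi_1$ from the Kelvin transform of the explicit ball solution $(1-|x|^2)_+^s$ (Proposition~\ref{prop:frac-kelvin} and Lemma~\ref{prop:supersolution}), which gives $(-\Delta)^s\varphi_1\ge 1$ in an annulus $B_4\setminus B_1$ essentially for free, and then uses the barrier only in a $\rho_0$-neighborhood of $\partial\Omega$ combined with a separate crude $L^\infty$ bound (Claim~\ref{Linftybound}) away from the boundary. If you want to keep your candidate barrier you will need to supply an honest computation; otherwise the Kelvin-transform construction is the cleaner route.
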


This $C^s$ regularity is optimal, in the sense that a solution to problem \eqref{eqlin} is not in general $C^\alpha$ for any $\alpha>s$.
This can be seen by looking at the problem
\begin{equation}\label{explicitsolution1}
\left\{ \begin{array}{rcll} (-\Delta)^s u &=&1&\textrm{in }B_r(x_0) \\
u&=&0&\textrm{in }\mathbb R^n\backslash B_r(x_0),\end{array}\right.\end{equation}
for which its solution is explicit. For any $r>0$ and $x_0\in\R^n$, it is given by \cite{G,BGR}
\begin{equation}\label{explicitsolution2}
u(x)=\frac{2^{-2s}\Gamma(n/2)}{\Gamma\left(\frac{n+2s}{2}\right)\Gamma(1+s)}
\left(r^2-|x-x_0|^2\right)^s\qquad\textrm{in}\ \ B_r(x_0).
\end{equation}
It is clear that this solution is $C^s$ up to the boundary but it is not $C^\alpha$ for any $\alpha>s$.

Since solutions $u$ of \eqref{eqlin} are $C^s$ up to the boundary, and not better, it is of importance to study the regularity of $u/\delta^s$ up to $\partial\Omega$.
For instance, our recent proof \cite{RS,RS-CRAS} of the Pohozaev identity for the fractional Laplacian uses in a crucial way that $u/\delta^s$ is H\"older continuous up to $\partial\Omega$.
This is the main result of the present paper and it is stated next.

For local equations of second order with bounded measurable coefficients and in non-divergence form, the analog result is given by a theorem of N. Krylov \cite{Krylov}, which states that
$u/\delta$ is $C^\alpha$ up to the boundary for some $\alpha\in (0,1)$.
This result is the key ingredient in the proof of the $C^{2,\alpha}$ boundary regularity of solutions to fully nonlinear elliptic equations $F(D^2u)=0$ ---see \cite{Kazdan,CaffC}.

For our nonlocal equation \eqref{eqlin}, the corresponding result is the following.

\begin{thm}\label{thm:v-is-Calpha}
Let $\Omega$ be a bounded $C^{1,1}$ domain, $g\in L^\infty(\Omega)$, $u$ be a solution of \eqref{eqlin}, and $\delta(x)={\rm dist}(x,\partial\Omega)$. Then, $u/\delta^s|_\Omega$ can be continuously extended to $\overline\Omega$. 
Moreover,  we have $u/\delta^s\in C^{\alpha}(\overline{\Omega})$ and
\[ \|u/\delta^s\|_{C^{\alpha}(\overline \Omega)}\le C \|g\|_{L^\infty(\Omega)}\]
for some $\alpha>0$ satisfying $\alpha<\min\{s,1-s\}$. 
The constants $\alpha$ and $C$ depend only on $\Omega$ and $s$.
\end{thm}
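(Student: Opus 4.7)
The strategy is to mimic Krylov's boundary oscillation-decay method in the nonlocal setting. The starting point is the pointwise bound
\[|u(x)|\leq C\|g\|_{L^\infty(\Omega)}\,\delta(x)^s \qquad \text{in }\Omega,\]
which strengthens Proposition \ref{prop:u-is-Cs}. It is obtained by placing a tangent exterior ball at each $x_0\in\partial\Omega$ (available since $\Omega$ is $C^{1,1}$) and using the explicit solution \eqref{explicitsolution2} to build a supersolution; the comparison principle then yields the bound. In particular $u/\delta^s\in L^\infty(\Omega)$, so the remaining task is H\"older continuity together with the continuous extension to $\overline\Omega$.

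After normalizing $\|g\|_{L^\infty}\leq 1$, the aim is to show, for each boundary point $x_0\in\partial\Omega$, the existence of monotone sequences $m_k$ nondecreasing and $M_k$ nonincreasing with $M_k-m_k\leq K\rho^{\alpha k}$ for some fixed $\rho\in(0,1)$ and $\alpha\in(0,\min\{s,1-s\})$, such that
\[m_k\,\delta^s \leq u \leq M_k\,\delta^s \qquad \text{in } B_{\rho^k}(x_0)\cap\Omega.\]
This provides both the continuous extension of $u/\delta^s$ at $x_0$ (as the common limit of $m_k$ and $M_k$) and a $C^\alpha$ modulus of continuity at $x_0$, with constants uniform along $\partial\Omega$ by translation of the argument. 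Interior $C^\alpha$ regularity of $u/\delta^s$ in sets $\{\delta\geq\rho\}$ follows separately from standard interior estimates for $(-\Delta)^s$ applied on balls of radius comparable to $\delta$, combined with the smoothness of $\delta^s$ away from $\partial\Omega$; the two regimes match to give $u/\delta^s\in C^\alpha(\overline{\Omega})$ with the stated bound.

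The inductive step is the heart of the argument. Given the bounds at scale $k$, set $v = u - m_k\phi$, where $\phi$ is a fixed barrier comparable to $\delta^s$ with $|(-\Delta)^s\phi|$ bounded on $B_{\rho^k}(x_0)\cap\Omega$. Then $v\geq 0$, its $L^\infty$ norm on $B_{\rho^k}(x_0)$ is controlled by $(M_k-m_k)\rho^{sk}$ modulo tail terms, and $(-\Delta)^s v$ is bounded. A nonlocal weak Harnack inequality (in the spirit of \cite{CS}) then produces a dichotomy in the smaller ball $B_{\rho^{k+1}}(x_0)$: either $v$ is quantitatively positive on a set of positive density, in which case the positivity propagates up to $\partial\Omega$ through the barrier and yields $m_{k+1}\geq m_k+\theta(M_k-m_k)$ for some $\theta>0$ independent of $k$; or the symmetric statement for $M_k\phi - u$ improves $M_{k+1}\leq M_k-\theta(M_k-m_k)$. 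In either case the oscillation decreases by a definite factor, closing the induction.

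The main obstacle is the nonlocality of $(-\Delta)^s$: upon rescaling at scale $\rho^k$ around $x_0$ to apply interior or measure estimates to $v$, the equation acquires tail contributions from the values of $u$ far from $x_0$ which must be controlled uniformly in $k$. Here the preliminary bound $|u|\leq C\delta^s$ plays a crucial role, as it makes these tails summable and correctly scaled. A second delicate point is the construction of the barrier $\phi$ with $\phi\asymp\delta^s$ and $|(-\Delta)^s\phi|\in L^\infty(\Omega)$: in a half-space one can take $\phi(x)=(x_n)_+^s$ and compute explicitly, but in a general $C^{1,1}$ domain the barrier must be patched from such half-space profiles along the normal, with the $C^{1,1}$ regularity of $\partial\Omega$ precisely what is required to keep $(-\Delta)^s\phi$ bounded. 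The restriction $\alpha<1-s$ stems from the regularity limitations of such barriers near $\partial\Omega$, while $\alpha<s$ is dictated by the $C^s$ regularity of $u$ itself from Proposition \ref{prop:u-is-Cs}.
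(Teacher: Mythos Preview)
Your approach is essentially the same as the paper's: both implement Krylov's iterative oscillation-decay scheme, with the barrier taken to be $\delta_0^s$ itself (the paper's Lemma~\ref{lapsdeltas} shows $(-\Delta)^s\delta_0^s\in L^\infty$ near $\partial\Omega$ via a boundary-flattening argument that matches your ``patching from half-space profiles'' description), and with the improvement step driven by Harnack-type information away from $\partial\Omega$ propagated back to the boundary by a subsolution. The paper splits this into two lemmas (Lemma~\ref{lemA} and Lemma~\ref{lemB}) and applies them to both $u-m_k\delta_0^s$ and $M_k\delta_0^s-u$ simultaneously rather than framing it as a dichotomy, but this is a cosmetic difference.

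There is, however, one genuine gap in your tail control. You write that $v=u-m_k\phi\geq 0$ and that the global bound $|u|\leq C\delta^s$ ``makes these tails summable and correctly scaled''. But $v\geq 0$ holds only in $B_{R_k}(x_0)$ by the inductive hypothesis, not in all of $\R^n$, and for a nonlocal Harnack inequality you need global nonnegativity. The paper handles this by working with $v^+$ and bounding the contribution of $v^-$ (supported in $\R^n\setminus B_{R_k}$) to $(-\Delta)^s v^+$. Crucially, the bound $|u|\leq C\delta^s$ alone is \emph{not} sufficient for this: it only gives $v^-\leq C\delta_0^s$, whose tail at scale $R_k$ is of order $R_k^{-s}$, far too large compared to the $R_k^{\alpha-s}$ needed to close the iteration. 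What the paper uses instead is the full inductive hypothesis at \emph{every} previous scale $j<k$, namely $u\geq m_j\delta_0^s$ in $B_{R_j}$, which gives
\[v\geq (m_j-m_k)\delta_0^s\geq -(4^{-\alpha j}-4^{-\alpha k})\delta_0^s \quad\text{in }B_{R_j}.\]
Integrating the resulting pointwise bound on $v^-$ against $|y|^{-n-2s}$ yields a tail contribution $\varepsilon_0(\alpha)R_k^{\alpha-s}$ with $\varepsilon_0(\alpha)\to 0$ as $\alpha\to 0$; it is precisely this smallness (not just boundedness) that allows one to choose $\alpha$ small enough to absorb the error and close the induction. Your sketch needs to incorporate this refinement.
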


To prove this result we use the method of Krylov (see \cite{Kazdan}). It consists of trapping the solution between two multiples of $\delta^s$ in order to control the oscillation of the quotient $u/\delta^s$ near the boundary.
For this, we need to prove, among other things, that $(-\Delta)^s\delta_0^s$ is bounded in $\Omega$, where $\delta_0(x)={\rm dist}(x,\R^n\setminus\Omega)$ is the distance function in $\Omega$ extended by zero outside. This will be guaranteed by the assumption that $\Omega$ is $C^{1,1}$.

To our knowledge, the only previous results dealing with the regularity up to the boundary for solutions to \eqref{eqlin} or its parabolic version were the ones by K. Bogdan \cite{B} and S. Kim and K. Lee \cite{KL}.
The first one \cite{B} is the boundary Harnack principle for nonnegative $s$-harmonic functions, which reads as follows: assume that $u$ and $v$ are two nonnegative functions in a Lipschitz domain $\Omega$, which  satisfy $(-\Delta)^su\equiv0$ and $(-\Delta)^sv\equiv0$ in $\Omega\cap B_r(x_0)$ for some ball $B_r(x_0)$ centered at $x_0\in \partial\Omega$. Assume also that $u\equiv v\equiv 0$ in $B_r(x_0)\setminus\Omega$. Then, the quotient $u/v$ is $C^\alpha(\overline{B_{r/2}(x_0)})$ for some $\alpha\in(0,1)$.
In \cite{BKK} the same result is proven in open domains $\Omega$, without any regularity assumption.

While the result in \cite{BKK} assumes no regularity on the domain, we need to assume $\Omega$ to be $C^{1,1}$. This assumption is needed to compare the solutions with the function $\delta^s$.
As a counterpart, we allow nonzero right hand sides $g\in L^\infty(\Omega)$ and also changing-sign solutions.
In $C^{1,1}$ domains,
our results in Section \ref{sec4} (which are local near any boundary point) extend Bogdan's result.
For instance, assume that $u$ and $v$  satisfy $(-\Delta)^s u = g$ and $(-\Delta)^s v = h$ in $\Omega$, $u\equiv v\equiv 0$ in $\R^n\setminus \Omega$, and that $h$ is positive in $\Omega$. Then, by Theorem \ref{thm:v-is-Calpha} we have that $u/\delta^s$ and $v/\delta^s$ are $C^\alpha(\overline{\Omega})$ functions. In addition, by the Hopf lemma for the fractional Laplacian we find that $v/\delta^s\ge c>0$ in $\Omega$. Hence, we obtain that the quotient $u/v$ is $C^\alpha$ up to the boundary, as in Bogdan's result for $s$-harmonic functions.

As in Krylov's result, our method can be adapted
to the case of nonlocal elliptic equations with ``bounded measurable coefficients''.
Namely, in another paper \cite{RS-K} we will prove the boundary Harnack principle for solutions to $\mathcal{L}u=g$ in $\Omega$, $u\equiv0$ in $\R^n\setminus\Omega$, where $g\in L^\infty(\Omega)$,
\begin{equation}\label{bmc}
\mathcal{L}u(x)=\int_{\R^n}\frac{2u(x)-u(x+y)-u(x-y)}{\left|y^TA(x)y\right|^{\frac{n+2s}{2}}}dy,
\end{equation}
and $A(x)$ is a symmetric matrix, measurable in $x$, and with $0<\lambda{\rm Id}\leq A(x)\leq \Lambda{\rm Id}$.

A second result (for the parabolic problem) related to ours is contained in \cite{KL}. The authors show that any solution of $\partial_t u + (-\Delta)^s u =0$ in $\Omega$, $u\equiv 0$ in $\R^n\setminus \Omega$, satisfies the following property: for any $t>0$ the function $u/\delta^s$ is continuous up to the boundary $\partial\Omega$.

Our results were motivated by the study of nonlocal semilinear problems $(-\Delta)^su=f(u)$ in $\Omega$, $u\equiv0$ in $\R^n\setminus\Omega$,
more specifically, by the Pohozaev identity that we establish in \cite{RS}.
Its proof requires the precise regularity theory up to the boundary developed in the present paper (see Corollary \ref{krylov} below).
Other works treating the fractional Dirichlet semilinear problem, which deal mainly with existence of solutions
and symmetry properties, are \cite{SV,ServV,FW,BMW}.

In the semilinear case, $g=f(u)$ and therefore $g$ automatically becomes more regular than just bounded. When $g$ has better regularity,
the next two results improve the preceding ones.
The proofs of these results require the use of the following weighted H\"older norms, a slight modification of the ones in Gilbarg-Trudinger \cite[Section 6.1]{GT}.

Throughout the paper, and when no confusion is possible, we use the notation $C^\beta(U)$ with $\beta>0$ to refer to the space $C^{k,\beta'}(U)$, where $k$ is the is greatest integer such that $k<\beta$ and where $\beta'=\beta-k$. This notation is specially appropriate when we work with $(-\Delta)^s$ in order to avoid the splitting of different cases in the statements of regularity results.
According to this, $[\,\cdot\,]_{C^{\beta}(U)}$ denotes the $C^{k,\beta'}(U)$ seminorm
\[[u]_{C^\beta(U)}=[u]_{C^{k,\beta'}(U)}=\sup_{x,y\in U,\ x\neq y}\frac{|D^ku(x)-D^ku(y)|}{|x-y|^{\beta'}}.\]
Moreover, given an open set $U\subset\R^n$ with $\partial U\neq\varnothing$, we will also denote
\[d_x= \mathrm{dist}(x,\partial U) \qquad\mbox{and}\qquad d_{x,y}=\min\{d_x,d_y\}.\]

\begin{defi}\label{definorm} Let $\beta>0$ and $\sigma\ge -\beta$. Let $\beta=k+\beta'$, with $k$ integer and $\beta'\in (0,1]$.
For $w\in C^{\beta}(U)=C^{k,\beta'}(U)$, define the seminorm
\[ [w]_{\beta;U}^{(\sigma)}= \sup_{x,y\in U} \biggl(d_{x,y}^{\beta+\sigma} \frac{|D^{k}w(x)-D^{k}w(y)|}{|x-y|^{\beta'}}\biggr).\]
For $\sigma>-1$, we also define  the norm $\|\,\cdot\,\|_{\beta;U}^{(\sigma)}$ as follows: in case that $\sigma\ge0$,
\[ \|w\|_{\beta;U}^{(\sigma)} = \sum_{l=0}^k \sup_{x\in U} \biggl(d_x^{l+\sigma} |D^l w(x)|\biggr) + [w]_{\beta;U}^{(\sigma)}\,,\]
while for $-1<\sigma<0$,
\[\|w\|_{\beta;U}^{(\sigma)} = \|w\|_{C^{-\sigma}(\overline U)}+\sum_{l=1}^k \sup_{x\in U} \biggl(d_x^{l+\sigma} |D^l w(x)|\biggr) + [w]_{\beta;U}^{(\sigma)}.\]

Note that $\sigma$ is the rescale order of the seminorm $[\,\cdot\,]_{\beta;U}^{(\sigma)}$, in the sense that $[w(\lambda\cdot)]_{\beta;U/\lambda}^{(\sigma)} = \lambda^\sigma[w]_{\beta;U}^{(\sigma)}$.
\end{defi}

When $g$ is H\"older continuous, the next result provides optimal estimates for higher order H\"older norms of $u$ up to the boundary.

\begin{prop}\label{prop:int-est-u}
Let $\Omega$ be a bounded domain, and $\beta>0$ be such that neither $\beta$ nor $\beta+2s$ is an integer. Let $g\in C^\beta(\Omega)$ be such that $\|g\|_{\beta;\Omega}^{(s)}<\infty$, and $u\in C^s(\R^n)$ be a solution of \eqref{eqlin}. Then,
$u\in C^{\beta+2s}(\Omega)$ and
\[\|u\|_{\beta+2s;\Omega}^{(-s)}\le C \bigl(\|u\|_{C^s(\R^n)}+\|g\|_{\beta;\Omega}^{(s)}\bigr),\]
where $C$ is a constant depending only on $\Omega$, $s$, and $\beta$.
\end{prop}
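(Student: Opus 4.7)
The plan is a weighted-Schauder rescaling argument: reduce the weighted estimate to an unweighted interior H\"older estimate for the fractional Laplacian, applied on unit balls obtained by rescaling around each point $x_0\in\Omega$. The weights in Definition 1.4 are designed precisely so that this rescaling closes.

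The main input is the following interior estimate: if $v\in L^\infty(\R^n)$ solves $(-\Delta)^s v=h$ in $B_1$ with $h\in C^\beta(\overline{B_1})$, and neither $\beta$ nor $\beta+2s$ is an integer, then
\[\|v\|_{C^{\beta+2s}(\overline{B_{1/2}})}\le C\Bigl(\|v\|_{L^\infty(B_1)}+\mathcal{T}(v)+\|h\|_{C^\beta(\overline{B_1})}\Bigr),\]
where $\mathcal{T}(v)=\int_{\R^n\setminus B_1}|v(y)|(1+|y|)^{-n-2s}dy$ is the standard nonlocal tail. This is part of the classical interior regularity theory for $(-\Delta)^s$; the non-integer hypotheses on $\beta$ and $\beta+2s$ are what avoid the critical Schauder exponents. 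If only the cruder form with $\|v\|_{L^\infty(\R^n)}$ on the right is available in the literature, one recovers the refinement by splitting the defining integral of $(-\Delta)^sv$ into contributions from $B_1$ and from $\R^n\setminus B_1$.

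Given this input, fix $x_0\in\Omega$, set $R=d_{x_0}/2$, and let $\bar u(y)=u(x_0+Ry)$, $\bar g(y)=R^{2s}g(x_0+Ry)$, so that $(-\Delta)^s\bar u=\bar g$ in $B_1$. The hypothesis $\|g\|_{\beta;\Omega}^{(s)}<\infty$ together with the lower bound $d_x\ge R$ on $B_R(x_0)$ yields $\|\bar g\|_{C^\beta(\overline{B_1})}\le C R^s\|g\|_{\beta;\Omega}^{(s)}$, after a term-by-term check of the pieces of the $C^\beta$ norm. For the $\bar u$ side, the key observation is that since $u\in C^s(\R^n)$ and $u\equiv 0$ outside $\Omega$, comparing $u(x)$ to $u$ at the nearest boundary point gives $|u(x)|\le d_x^s\,[u]_{C^s(\R^n)}$ for every $x$; this translates into $\|\bar u\|_{L^\infty(B_1)}\le C R^s\|u\|_{C^s(\R^n)}$ and, by a direct integration using $d_x\le|x-x_0|+2R$, also $\mathcal{T}(\bar u)\le C R^s\|u\|_{C^s(\R^n)}$. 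The interior estimate then yields
\[\|\bar u\|_{C^{\beta+2s}(\overline{B_{1/2}})}\le C R^s\bigl(\|u\|_{C^s(\R^n)}+\|g\|_{\beta;\Omega}^{(s)}\bigr).\]
Unscaling produces $|D^l u(x_0)|\le C R^{s-l}(\|u\|_{C^s(\R^n)}+\|g\|_{\beta;\Omega}^{(s)})$ for $0\le l\le k:=\lfloor\beta+2s\rfloor$ and $[D^k u]_{C^{\beta'}(B_{R/2}(x_0))}\le C R^{-\beta-s}(\cdots)$. Multiplication by the weights $d_{x_0}^{l-s}$ and $d_{x,y}^{\beta+s}$ reproduces exactly the corresponding terms in $\|u\|_{\beta+2s;\Omega}^{(-s)}$. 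To handle the weighted H\"older seminorm at a pair $x,y\in\Omega$, I would split into the two regimes $|x-y|\le d_{x,y}/4$ (both points lie in a common ball $B_{R/2}(x_0)$ where the rescaled estimate applies directly) and $|x-y|>d_{x,y}/4$ (use the two pointwise bounds on $|D^k u(x)|$ and $|D^k u(y)|$ separately, combined with $|x-y|^{\beta'}\ge(d_{x,y}/4)^{\beta'}$).

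I expect the main obstacle to be precisely the scaling of the $L^\infty$ term in the interior estimate. Using the cruder version with $\|v\|_{L^\infty(\R^n)}$ on the right fails: since $\|\bar u\|_{L^\infty(\R^n)}=\|u\|_{L^\infty(\R^n)}$ is $R$-independent, the resulting pointwise bounds would contain a factor of $R^{-s}$ that blows up as $R\to 0$. The remedy—separating the nonlocal tail $\mathcal{T}$ from the local $L^\infty$ part, and then exploiting the decay $|u(x)|\le C\,d_x^s\|u\|_{C^s(\R^n)}$ coming from $u|_{\partial\Omega}=0$—is what supplies the missing $R^s$ factor and makes the rescaling close. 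Once this refinement is in place, taking suprema (or sums) over $x_0,x,y\in\Omega$ gives the announced bound.
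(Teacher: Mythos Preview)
Your argument is correct and follows the same rescaling scheme as the paper: the paper packages the result as a special case ($\alpha=s$) of a slightly more general lemma (Lemma~\ref{refined-2s-gain}), whose proof is exactly the localize--rescale--apply interior Schauder--reglue procedure you describe, with Corollary~\ref{int-est-brick} playing the role of your tail-refined interior estimate.

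The one technical difference worth noting is how the crucial $R^s$ factor is obtained for the rescaled function. You use the zero exterior data to get $|u(x)|\le d_x^s[u]_{C^s(\R^n)}$ and then bound $d_{x_0+Ry}\le 2R+R|y|$. The paper instead subtracts the constant and works with $\tilde u(y)=u(x_0+Ry)-u(x_0)$, so that $|\tilde u(y)|\le R^s|y|^s[u]_{C^s(\R^n)}$ directly from H\"older continuity, without ever invoking $u\equiv0$ in $\R^n\setminus\Omega$. Both yield the same tail bound, but the subtraction trick is what lets the paper state the result for an arbitrary $C^\alpha(\R^n)$ function $w$ (with $\alpha<2s$), not just solutions of~\eqref{eqlin} vanishing outside $\Omega$; this added generality is used later in the proof of Theorem~\ref{thm:int-est-v}, where the function $v=u/\delta^s$ no longer vanishes outside $\Omega$.
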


Next, the H\"older regularity up to the boundary of $u/\delta^s$ in Theorem \ref{thm:v-is-Calpha} can be improved when $g$ is H\"older continuous. This is stated in the following theorem, whose proof uses a nonlocal equation satisfied by the quotient $u/\delta^s$ in $\Omega$ ---see \eqref{equaciov}--- and the fact that this quotient is $C^\alpha(\overline\Omega)$.

\begin{thm}\label{thm:int-est-v}
Let $\Omega$ be a bounded $C^{1,1}$ domain, and let $\alpha\in(0,1)$ be given by Theorem \ref{thm:v-is-Calpha}. Let $g\in L^\infty(\Omega)$ be such that $\|g\|_{\alpha;\Omega}^{(s-\alpha)}<\infty$, and $u$ be a solution of \eqref{eqlin}.
Then, $u/\delta^s\in C^\alpha(\overline\Omega)\cap C^\gamma(\Omega)$ and
\[ \|u/\delta^s\|_{\gamma;\Omega}^{(-\alpha)}\le C \bigl(\|g\|_{L^\infty(\Omega)}+\|g\|_{\alpha;\Omega}^{(s-\alpha)}\bigr),\]
where $\gamma=\min\{1,\alpha+2s\}$ and $C$ is a constant depending only on $\Omega$ and $s$.
\end{thm}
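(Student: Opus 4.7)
The plan is to combine the global $C^\alpha$ information for $v := u/\delta^s$ provided by Theorem \ref{thm:v-is-Calpha} with an interior bootstrap based on the Schauder-type estimate of Proposition \ref{prop:int-est-u}. Since Theorem \ref{thm:v-is-Calpha} already yields $v\in C^\alpha(\overline\Omega)$ together with $\|v\|_{C^\alpha(\overline\Omega)}\le C\|g\|_{L^\infty(\Omega)}$, the new content is the interior weighted seminorm $[v]_{\gamma;\Omega}^{(-\alpha)}$, with $\gamma=\min\{1,\alpha+2s\}$. I would bound it by a localization argument: for $x_0\in\Omega$ and $d=\delta(x_0)$, produce a scale-invariant estimate for $v$ on $B_{d/2}(x_0)$; the standard patching (treat $|x-y|<d_{x,y}/2$ via the local estimate and $|x-y|\ge d_{x,y}/2$ via the global $C^\alpha$ bound) then assembles the weighted norm in Definition \ref{definorm}.

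The local estimate itself is produced from the nonlocal equation \eqref{equaciov} satisfied by $v$ in $\Omega$, obtained by expanding $(-\Delta)^s(\delta^s v)=g$. On $B_{d/2}(x_0)$ this equation reads, after rescaling, as $(-\Delta)^s v = \tilde g$ plus controlled lower-order and zeroth-order terms, whose rescaled $C^\alpha$ norm is bounded by $\|g\|_{L^\infty(\Omega)}+\|g\|_{\alpha;\Omega}^{(s-\alpha)}+\|v\|_{C^\alpha(\overline\Omega)}$; here one uses crucially that the $C^{1,1}$ hypothesis on $\partial\Omega$ makes $(-\Delta)^s\delta^s$ bounded in $\Omega$, as remarked after Theorem \ref{thm:v-is-Calpha}. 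Applying Proposition \ref{prop:int-est-u} to the rescaled function $\tilde v(y)=v(x_0+dy/4)$ on $B_1$ then gives a $C^{\alpha+2s}$ estimate on $B_{1/2}$; undoing the rescaling yields exactly the weighted bound for $v$ on $B_{d/4}(x_0)$ with the correct exponent $-\alpha$. The ceiling $\gamma\le 1$ appears because pushing $v$ past $C^1$ would require differentiating $\delta^s$ twice, and only $C^{1,1}$ regularity of $\delta$ is assumed.

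The main difficulty, as I see it, is the handling of the equation \eqref{equaciov}: because $(-\Delta)^s$ has no Leibniz rule, passing from $(-\Delta)^s(\delta^sv)=g$ to an estimate for $(-\Delta)^sv$ produces commutator-type cross integrals between multiplication by $\delta^s$ and the fractional Laplacian, which must be estimated pointwise with the correct powers of $\delta(x)$ so that the rescaled quantities match Definition \ref{definorm}. A closely related issue is the control of the nonlocal tails — the contribution to $(-\Delta)^s v(x)$ coming from $y$ far from $x_0$, which depends on $v$ everywhere in $\Omega$. These tails are precisely what is tamed by the global $C^\alpha(\overline\Omega)$ bound from Theorem \ref{thm:v-is-Calpha}, so invoking that theorem at this step is essential; the weighted-norm bookkeeping that converts the rescaled $C^{\alpha+2s}$ estimate into a bound for $[v]_{\gamma;\Omega}^{(-\alpha)}$ is then routine but delicate.
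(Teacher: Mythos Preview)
Your plan is the right one and matches the paper's: use the equation \eqref{equaciov} for $v=u/\delta^s$ together with the global $C^\alpha$ bound from Theorem~\ref{thm:v-is-Calpha}, and bootstrap via a weighted Schauder estimate. But there is a genuine gap in your treatment of the commutator term $I_s(v,\delta_0^s)$.

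You claim that the right-hand side of \eqref{equaciov}, after rescaling, has $C^\alpha$ norm bounded by $\|g\|_{L^\infty}+\|g\|_{\alpha;\Omega}^{(s-\alpha)}+\|v\|_{C^\alpha(\overline\Omega)}$. This is not true of the cross term. The integrand in $I_s(v,\delta_0^s)(x)$ behaves like $|x-y|^{\beta_v+s}/|x-y|^{n+2s}$ near $x$, where $\beta_v$ is the local H\"older exponent of $v$; since $\alpha<s$, the global $C^\alpha$ regularity of $v$ gives $\alpha+s<2s$ and the integral does not converge on that information alone. To even make $I_s(v,\delta_0^s)$ finite you must use interior regularity of $v$ beyond $C^s$. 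The paper's Lemma~\ref{lem:bound-I} makes this precise:
\[
\|I_s(v,\delta_0^s)\|_{\alpha;U}^{(s-\alpha)} \le C\bigl([v]_{C^\alpha(\R^n)}+[v]_{\alpha+s;U}^{(-\alpha)}\bigr),
\]
so the bound involves the higher-order weighted seminorm $[v]_{\alpha+s;U}^{(-\alpha)}$, not just $\|v\|_{C^\alpha}$. This creates an apparent circularity: the right-hand side of the equation for $v$ already contains (part of) the norm you want to estimate. The paper resolves this by an absorption argument: work first in $U\subset\subset\Omega_{\rho_0}$ where $\|v\|_{\alpha+2s;U}^{(-\alpha)}$ is a priori finite, interpolate $[v]_{\alpha+s;U}^{(-\alpha)}\le C(\varepsilon_0)\|v\|_{C^\alpha}+\varepsilon_0\|v\|_{\alpha+2s;U}^{(-\alpha)}$, choose $\varepsilon_0$ small to absorb, and finally let $U\uparrow\Omega_{\rho_0}$. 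Your proposal does not mention this mechanism, and the direct bound you assert would fail.

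Two smaller points. First, Proposition~\ref{prop:int-est-u} is the case $\alpha=s$ of the tool you need; the correct statement to invoke is Lemma~\ref{refined-2s-gain}, which allows arbitrary $\alpha<2s$ and in particular the $\alpha<\min\{s,1-s\}$ coming from Theorem~\ref{thm:v-is-Calpha}. Second, your explanation for the ceiling $\gamma\le 1$ is not quite the operative one: near $\partial\Omega$ the argument actually gives the full $C^{\alpha+2s}$ weighted estimate; the cap to $\gamma=\min\{1,\alpha+2s\}$ enters only in $\Omega\setminus\Omega_{\rho_0}$, where $\delta$ is merely Lipschitz (not $C^{1,1}$), so $\delta^s$ is $C^{0,1}$ and $u/\delta^s$ inherits at most one derivative.
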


Finally, we apply the previous results to the semilinear problem
\begin{equation}\label{eqnonlin}
\left\{ \begin{array}{rcll} (-\Delta)^s u &=&f(x,u)&\textrm{in }\Omega \\
u&=&0&\textrm{on }\mathbb R^n\backslash\Omega,\end{array}\right.
\end{equation}
where $\Omega$ is a bounded $C^{1,1}$ domain and $f$ is a Lipschitz nonlinearity.

In the following result, the meaning of ``bounded solution'' is that of ``bounded weak solution'' (see definition \ref{weak}) or that of ``viscosity solution''.
By Remark \ref{remviscosity}, these two notions coincide.
Also, by $f\in C^{0,1}_{\rm loc}(\overline\Omega\times\R)$ we mean that $f$ is Lipschitz in every compact subset of $\overline\Omega\times\R$.

\begin{cor}\label{krylov} Let $\Omega$ be a bounded and $C^{1,1}$ domain, $f\in C^{0,1}_{\rm loc}(\overline\Omega\times\R)$, $u$ be a bounded solution of
\eqref{eqnonlin}, and $\delta(x)={\rm dist}(x,\partial\Omega)$.
Then,
\begin{itemize}
\item[(a)] $u\in C^s(\R^n)$ and, for every $\beta\in[s,1+2s)$, $u$ is of class $C^{\beta}(\Omega)$ and
\[[u]_{C^{\beta}(\{x\in\Omega\,:\,\delta(x)\ge\rho\})}\le C \rho^{s-\beta}\qquad \textrm{for all}\ \  \rho\in(0,1).\]
\item[(b)] The function $u/\delta^s|_\Omega$ can be continuously extended to $\overline\Omega$. Moreover, there exists $\alpha\in(0,1)$ such that $u/\delta^s\in C^{\alpha}(\overline{\Omega})$. In addition, for all $\beta\in[\alpha,s+\alpha]$, it holds the estimate
\[ [u/\delta^s]_{C^{\beta}(\{x\in\Omega\,:\,\delta(x)\ge\rho\})}\le C \rho^{\alpha-\beta}\qquad \textrm{for all}\ \  \rho\in(0,1).\]
\end{itemize}
The constants $\alpha$ and $C$ depend only on $\Omega$, $s$, $f$, $\|u\|_{L^{\infty}(\R^n)}$, and $\beta$.
\end{cor}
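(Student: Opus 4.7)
The strategy is to view \eqref{eqnonlin} as the linear problem \eqref{eqlin} with right-hand side $g(x) := f(x, u(x))$ and to bootstrap the results already established in the paper. Since $u \in L^\infty(\Omega)$ and $f \in C^{0,1}_{\rm loc}(\overline\Omega \times \R)$, one has $g \in L^\infty(\Omega)$ with norm controlled by $\|u\|_{L^\infty(\R^n)}$ and the Lipschitz constant of $f$ on the relevant compact set. Proposition \ref{prop:u-is-Cs} then immediately yields $u \in C^s(\R^n)$, which gives the first assertion of (a) together with the endpoint $\beta = s$ of its interior estimate; similarly, Theorem \ref{thm:v-is-Calpha} gives the continuous extension of $u/\delta^s$ to $\overline\Omega$ and $u/\delta^s \in C^\alpha(\overline\Omega)$, covering the first assertion of (b) and the endpoint $\beta = \alpha$.

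For the higher interior estimates in (a), I would iterate Proposition \ref{prop:int-est-u}. The auxiliary fact driving the iteration is a composition statement: if $w \in L^\infty(\Omega)$ satisfies $\|w\|^{(-s)}_{\beta;\Omega} \leq M$, then $f(\cdot, w(\cdot))$ satisfies $\|f(\cdot, w)\|^{(s)}_{\min(1,\beta);\Omega} \leq C(M)$, because Lipschitz composition preserves H\"older seminorms (and any chain-rule derivatives) while the $d_{x,y}^{2s}$ gap between the two weightings only tightens the estimate. Starting from $u \in C^s(\R^n)$ one obtains $\|g\|^{(s)}_{s;\Omega} \leq C$ and Proposition \ref{prop:int-est-u} gives $\|u\|^{(-s)}_{3s;\Omega} \leq C$ (perturbing $\beta$ slightly when needed to avoid integer exponents). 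Iterating the relation $\beta_{k+1} = \min(1, \beta_k) + 2s$ exhausts every fixed target in $[s, 1+2s)$ in finitely many steps, and unwinding Definition \ref{definorm} translates the weighted bound into $[u]_{C^{\beta}(\{x\in\Omega\,:\,\delta(x)\geq\rho\})} \leq C \rho^{s-\beta}$.

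For (b), once the interior H\"older control on $u$ is in hand, the same composition fact yields $\|g\|^{(s-\alpha)}_{\alpha;\Omega} \leq C$, so Theorem \ref{thm:int-est-v} directly produces $\|u/\delta^s\|^{(-\alpha)}_{\gamma;\Omega} \leq C$ with $\gamma = \min(1, \alpha + 2s)$. Since $\alpha < \min(s, 1-s)$ by Theorem \ref{thm:v-is-Calpha}, one checks $s + \alpha < 1$ and $s + \alpha \leq \alpha + 2s$, so $s + \alpha \leq \gamma$ and the claimed estimate $[u/\delta^s]_{C^{\beta}(\{x\in\Omega\,:\,\delta(x)\geq\rho\})} \leq C \rho^{\alpha - \beta}$ follows for every $\beta \in [\alpha, s+\alpha]$ by unwinding Definition \ref{definorm}. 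The only non-bookkeeping step in this scheme is the verification of the composition statement for the distance-weighted seminorms under a Lipschitz nonlinearity, together with the routine avoidance of the integer-exponent side conditions in Proposition \ref{prop:int-est-u}; granting this, the corollary reduces to a finite chain of applications of Propositions \ref{prop:u-is-Cs} and \ref{prop:int-est-u} and Theorems \ref{thm:v-is-Calpha} and \ref{thm:int-est-v}.
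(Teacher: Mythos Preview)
Your proposal is correct and follows essentially the same route as the paper: set $g(x)=f(x,u(x))$, apply Proposition~\ref{prop:u-is-Cs} and Theorem~\ref{thm:v-is-Calpha} for the baseline regularity, then iterate Proposition~\ref{prop:int-est-u} for part~(a) and invoke Theorem~\ref{thm:int-est-v} for part~(b). The paper's proof is in fact terser than yours---it simply cites these results without spelling out the composition or bootstrapping steps---so the additional detail you provide (the Lipschitz-composition bound for the weighted norms, the handling of integer exponents, and the check that $s+\alpha\le\gamma$) is helpful elaboration rather than a departure.
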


The paper is organized as follows.
In Section \ref{sec3} we prove Propositions \ref{prop:u-is-Cs} and \ref{prop:int-est-u}.
In Section \ref{sec4} we prove Theorem \ref{thm:v-is-Calpha} using the Krylov method.
In Section \ref{sec5} we prove Theorem \ref{thm:int-est-v} and Corollary \ref{krylov}.
Finally, the Appendix deals with some basic tools and barriers which are used throughout the paper.

\section{Optimal H\"older regularity for $u$}
\label{sec3}

In this section we prove that, assuming $\Omega$ to be a  bounded Lipschitz domain satisfying the exterior ball condition, every solution $u$ of \eqref{eqlin} belongs to $C^s(\R^n)$.
For this, we first establish that $u$ is $C^{\beta}$ in $\Omega$, for all $\beta\in(0,2s)$, and sharp bounds for the corresponding seminorms near $\partial\Omega$.
These bounds yield $u\in C^s(\R^n)$ as a corollary.
First, we make precise the notion of weak solution to problem \eqref{eqlin}.

\begin{defi}\label{weak} We say that $u$ is a weak solution of \eqref{eqlin} if $u\in H^s(\R^n)$, $u\equiv 0$ (a.e.) in $\R^n\setminus\Omega$, and
\[\int_{\mathbb R^n}(-\Delta)^{s/2}u(-\Delta)^{s/2}v\,dx=\int_\Omega gv\,dx\]
for all $v\in H^s(\R^n)$ such that $v\equiv0$ in $\R^n\setminus\Omega$.
\end{defi}

We recall first some well known interior regularity results for linear equations involving the operator $(-\Delta)^s$, defined by \eqref{laps}.
The first one states that $w\in C^{\beta+2s}(\overline{B_{1/2}})$ whenever $w\in C^\beta(\R^n)$ and $(-\Delta)^sw\in C^\beta(\overline{B_1})$.
Recall that, throughout this section and in all the paper, we denote by $C^\beta$, with $\beta>0$, the space $C^{k,\beta'}$, where $k$ is an integer, $\beta'\in(0,1]$, and $\beta=k+\beta'$.

\begin{prop}\label{prop-reg-lin-2}
Assume that $w\in C^\infty (\R^n)$ solves $(-\Delta)^s w = h$ in $B_1$ and that neither $\beta$ nor $\beta+2s$ is an integer. Then,
\[ \|w\|_{C^{\beta+2s}(\overline{B_{1/2}})} \le C\bigl( \|w\|_{C^\beta(\R^n)}+ \|h\|_{C^\beta(\overline{B_1})}\bigr)\,,\]
where $C$ is a constant depending only on $n$, $s$, and $\beta$.
\end{prop}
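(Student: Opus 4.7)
The plan is to combine a cutoff argument with the classical mapping properties of the Riesz potential, reducing the estimate to the inversion of $(-\Delta)^s$ on a compactly supported function.

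Fix $\eta\in C_c^\infty(B_1)$ with $\eta\equiv1$ on $B_{3/4}$, and split $w=w_1+w_2$ with $w_1=\eta w$. The Leibniz rule for H\"older seminorms gives $\|w_i\|_{C^\beta(\R^n)}\le C\|w\|_{C^\beta(\R^n)}$ for $i=1,2$. Since $w_2\equiv 0$ on $B_{3/4}$, for $x\in B_{1/2}$ the principal value defining $(-\Delta)^s w_2(x)$ reduces to an integral over $\R^n\setminus B_{3/4}$ against the uniformly smooth kernel $|x-y|^{-n-2s}$; differentiating under the integral yields $\|(-\Delta)^s w_2\|_{C^k(\overline{B_{1/2}})}\le C_k\|w\|_{L^\infty(\R^n)}$ for every $k\ge 0$. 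Consequently, $(-\Delta)^s w_1=\widetilde h:=h-(-\Delta)^s w_2$ in $B_{1/2}$, with
\[
\|\widetilde h\|_{C^\beta(\overline{B_{1/2}})}\le C\bigl(\|h\|_{C^\beta(\overline{B_1})}+\|w\|_{C^\beta(\R^n)}\bigr).
\]

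Next, I would upgrade this to a global $C^\beta(\R^n)$ bound on $(-\Delta)^s w_1$: for $x$ outside $B_{1/2}$, the integrand near $x$ is controlled by $\|w_1\|_{C^\beta}$ via cancellation with a Taylor polynomial, while the contribution from $y$ far from the support of $w_1$ decays because of the kernel. Since $w_1$ is smooth and compactly supported in $B_1$, one may then invert via the Riesz representation $w_1=I_{2s}*((-\Delta)^s w_1)$, with $I_{2s}(x)=c\,|x|^{2s-n}$ (a small modification handles $n\le 2s$). The classical Schauder-type estimate for $I_{2s}$ as a map $C^\beta(\R^n)\to C^{\beta+2s}(\R^n)$---valid precisely when neither index is an integer---then yields
\[
\|w\|_{C^{\beta+2s}(\overline{B_{1/2}})}=\|w_1\|_{C^{\beta+2s}(\overline{B_{1/2}})}\le C\bigl(\|h\|_{C^\beta(\overline{B_1})}+\|w\|_{C^\beta(\R^n)}\bigr).
\]

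The main obstacle is the H\"older mapping property of $I_{2s}$, which requires a careful case analysis according to the integer parts of $\beta$ and $\beta+2s$: this is where the exclusion of integer values is essential, since logarithmic losses appear at the endpoints. I would handle this either by invoking the classical result directly or by bootstrapping with incremental quotients, reducing everything to the base range $\beta+2s<1$, where the estimate follows from a direct splitting of the convolution into a near-singularity part (using cancellation) and a tail part (using kernel decay). A secondary, more routine technicality is the Leibniz-type estimate for H\"older seminorms of products used in the very first step.
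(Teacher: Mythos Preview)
The paper does not actually give a self-contained argument here; it merely refers to Silvestre's proof of the global version and says to ``follow the proof''. Your attempt is therefore more detailed than the paper, and the cutoff-plus-Riesz-potential strategy is a reasonable way to localize the global result. The first reduction (controlling $(-\Delta)^s w_2$ in $B_{1/2}$ by $\|w\|_{L^\infty}$, hence obtaining $\widetilde h\in C^\beta(\overline{B_{1/2}})$) is correct and is exactly how the paper itself localizes in Corollary~\ref{int-est-brick}.

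The gap is the step you call ``upgrade this to a global $C^\beta(\R^n)$ bound on $(-\Delta)^s w_1$''. In the annulus $B_1\setminus B_{1/2}$ you have neither the equation (which only gives information inside $B_{1/2}$) nor vanishing of $w_1$. Your justification---Taylor cancellation using only $\|w_1\|_{C^\beta}$---can at best produce a pointwise bound on $(-\Delta)^s w_1$ (and even that requires $\beta>2s$); it cannot control the $C^\beta$ \emph{seminorm} of $(-\Delta)^s w_1$ there, since $(-\Delta)^s$ is an operator of order $2s$ and hence $[(-\Delta)^s w_1]_{C^\beta}$ in that region is governed by $\|w_1\|_{C^{\beta+2s}}$, which is exactly what you are trying to prove. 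The mapping property $I_{2s}:C^\beta\to C^{\beta+2s}$ is a global statement about a convolution, so you cannot invoke it without this missing global control. The argument as written is circular.

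One clean fix: extend $\widetilde h|_{\overline{B_{1/2}}}$ to a compactly supported $H\in C^\beta(\R^n)$ with comparable norm, set $v=I_{2s}*H\in C^{\beta+2s}_{\rm loc}$, and note that $w_1-v$ is $s$-harmonic in $B_{1/2}$ with $\|w_1-v\|_{C^\alpha(\R^n)}$ controlled by the data; interior smoothness of $s$-harmonic functions (with bounds depending only on weighted $L^1$ data, as in Corollary~\ref{int-est-brick2}) then gives $w_1-v\in C^{\beta+2s}(\overline{B_{1/4}})$. Alternatively, one can bypass the Riesz potential entirely and run incremental quotients, which is closer to how the global estimate is often proved.
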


\begin{proof} Follow the proof of Proposition 2.1.8 in \cite{S}, where the same result is proved with $B_1$ and $B_{1/2}$ replaced by the whole $\R^n$.
\end{proof}

The second result states that $w\in C^{\beta}(\overline{B_{1/2}})$ for each $\beta\in(0,2s)$ whenever $w\in L^\infty(\R^n)$ and $(-\Delta)^sw\in L^\infty({B_1})$.

\begin{prop} \label{prop-reg-lin-1}
Assume that $w\in C^\infty (\R^n)$ solves $(-\Delta)^s w = h$ in $B_1$.
Then, for every $\beta\in(0,2s)$,
\[ \|w\|_{C^{\beta}(\overline{B_{1/2}})} \le C\bigl( \|w\|_{L^\infty(\R^n)}+ \|h\|_{L^\infty(B_1)}\bigr)\,,\]
where $C$ is a constant depending only on $n$, $s$, and $\beta$.
\end{prop}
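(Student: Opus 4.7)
The plan is to follow the strategy indicated in the proof of Proposition \ref{prop-reg-lin-2}, namely to adapt the corresponding global estimate of Silvestre \cite{S}. The global version states that if $v \in L^\infty(\R^n)$ and $(-\Delta)^s v \in L^\infty(\R^n)$, then $v \in C^\beta(\R^n)$ for every $\beta \in (0, 2s)$ (with $\beta$ not an integer), with norm controlled by $\|v\|_{L^\infty(\R^n)} + \|(-\Delta)^s v\|_{L^\infty(\R^n)}$. That global estimate is in turn established in \cite{S} by combining the Riesz potential representation $v = I_{2s} \ast (-\Delta)^s v$ (valid for suitable decaying data, and otherwise after a correction) with standard regularity for Riesz potentials, or alternatively via a compactness-scaling argument.

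To pass from the global to the local statement, I would localize with a cutoff. Take $\eta \in C^\infty_c(\R^n)$ with $\eta \equiv 1$ on $B_{3/4}$ and $\operatorname{supp} \eta \subset B_1$, and set $\tilde w = \eta w$, so that $\|\tilde w\|_{L^\infty(\R^n)} \le \|w\|_{L^\infty(\R^n)}$. For every $x \in B_{1/2}$ one has $(1-\eta)(x)=0$ and $(1-\eta)w$ vanishes on $B_{3/4}$, so using linearity and the definition \eqref{laps},
\[ (-\Delta)^s \tilde w(x) \;=\; h(x) \,+\, c_{n,s} \int_{\R^n} \frac{(1-\eta(y))\, w(y)}{|x-y|^{n+2s}}\, dy. \]
The last integrand is supported in $\{|y-x|\ge 1/4\}$, so its absolute value is bounded by $C \|w\|_{L^\infty(\R^n)}$ and we conclude $\|(-\Delta)^s \tilde w\|_{L^\infty(B_{1/2})} \le \|h\|_{L^\infty(B_1)} + C\|w\|_{L^\infty(\R^n)}$.

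The main obstacle is that $(-\Delta)^s \tilde w$ need not be bounded \emph{globally}: although $\tilde w$ is compactly supported, $(-\Delta)^s \tilde w(x)$ can blow up like $\operatorname{dist}(x, \operatorname{supp}\tilde w)^{-2s}$ as $x$ exits $\operatorname{supp} \eta$ from the outside, so a naive application of the global theorem to $\tilde w$ fails. This is handled by observing that the argument in \cite{S} is in fact local in character: the $C^\beta$ estimate for $v$ at a point $x_0$ uses only $\|v\|_{L^\infty(\R^n)}$ together with $\|(-\Delta)^s v\|_{L^\infty(B_\rho(x_0))}$ on a small fixed ball around $x_0$. Applying this localized form of the global estimate to $\tilde w$ at each $x_0 \in B_{1/2}$ gives $\tilde w \in C^\beta(\overline{B_{1/2}})$ with the desired bound; since $\tilde w \equiv w$ on $B_{1/2}$, Proposition \ref{prop-reg-lin-1} follows.
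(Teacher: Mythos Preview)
Your proposal is correct and ultimately rests on exactly the same observation as the paper's one-line proof: Silvestre's argument for the global estimate (Proposition~2.1.9 in \cite{S}) is local in character, so it yields the bound directly from $\|w\|_{L^\infty(\R^n)}$ and $\|h\|_{L^\infty(B_1)}$.

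The cutoff step you introduce is an unnecessary detour. Once you grant that Silvestre's proof localizes, you may apply it to $w$ itself --- the hypothesis already gives $w\in L^\infty(\R^n)$ globally and $(-\Delta)^s w=h\in L^\infty(B_1)$, which is precisely what the localized argument needs. (The cutoff trick is what the paper uses \emph{afterwards}, in Corollaries~\ref{int-est-brick} and~\ref{int-est-brick2}, to weaken the global assumption on $w$ to a weighted $L^1$ tail; those corollaries take Proposition~\ref{prop-reg-lin-1} as input.) A small inaccuracy: since $\tilde w=\eta w\in C^\infty_c(\R^n)$, the function $(-\Delta)^s\tilde w$ is smooth and globally bounded --- it does not blow up. What actually fails is a bound for it \emph{in terms of $\|w\|_{L^\infty}$ and $\|h\|_{L^\infty}$ alone} in the transition annulus where $\eta$ varies; your resolution of this difficulty is, again, the paper's observation that the global hypothesis on $(-\Delta)^s v$ is not needed in Silvestre's proof.
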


\begin{proof}
Follow the proof of Proposition 2.1.9 in \cite{S}, where the same result is proved in the whole $\R^n$.
\end{proof}

The third result is the analog of the first, with the difference that it does not need to assume $w\in C^\beta(\R^n)$, but only $w\in C^\beta(\overline{B_2})$ and
$(1+|x|)^{-n-2s}w(x)\in L^1(\R^n)$.

\begin{cor}\label{int-est-brick}
Assume that $w\in C^{\infty}(\R^n)$ is a solution of $(-\Delta)^s w = h$ in $B_2$, and that neither $\beta$ nor $\beta+2s$ is an integer.
Then,
\[ \|w\|_{C^{\beta+2s}(\overline{B_{1/2}})} \le C\biggl(\|(1+|x|)^{-n-2s}w(x)\|_{L^1(\R^n)} + \|w\|_{C^{\beta}(\overline {B_2})}  + \|h\|_{C^{\beta}(\overline{B_2})}  \biggr)\]
where the constant $C$ depends only on $n$, $s$, and $\beta$.
\end{cor}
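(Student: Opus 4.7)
The plan is to reduce to Proposition \ref{prop-reg-lin-2} by the standard cut-off trick, localizing $w$ so that its global $C^\beta$ norm is controlled and dealing separately with the tail contribution to $(-\Delta)^s$, which is where the weighted $L^1$ hypothesis enters.

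Concretely, I would fix a smooth cut-off $\eta\in C^\infty_c(\R^n)$ with $\eta\equiv 1$ on $B_{3/2}$ and $\eta\equiv 0$ outside $B_2$, and split $w=w_1+w_2$ with $w_1=\eta w$ and $w_2=(1-\eta)w$. Since $w\in C^\beta(\overline{B_2})$ and $\eta$ is smooth with support in $B_2$, the product $w_1$ lies in $C^\beta(\R^n)$ with $\|w_1\|_{C^\beta(\R^n)}\le C\|w\|_{C^\beta(\overline{B_2})}$. This is exactly the missing hypothesis that prevents applying Proposition \ref{prop-reg-lin-2} directly to $w$, so $w_1$ is the right object on which to run it.

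The key computation is then to verify that $w_1$ satisfies an equation of the form $(-\Delta)^s w_1=\tilde h$ in $B_1$ with $\tilde h\in C^\beta(\overline{B_1})$ and with the correct bounds. Writing $(-\Delta)^s w_1=h-(-\Delta)^s w_2$ on $B_1$ and observing that $w_2\equiv 0$ on $B_{3/2}\supset B_1$, the PV integral defining $(-\Delta)^s w_2(x)$ becomes, for $x\in B_1$,
\[(-\Delta)^s w_2(x)=-c_{n,s}\int_{\R^n\setminus B_{3/2}}\frac{(1-\eta(y))w(y)}{|x-y|^{n+2s}}dy,\]
a non-singular convolution-type integral. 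The kernel $1/|x-y|^{n+2s}$ and all its $x$-derivatives are uniformly bounded for $x\in \overline{B_1}$ and $|y|\ge 3/2$; using $|x-y|\ge |y|/3$ there, one bounds the annular piece $3/2\le |y|\le 2$ by $C\|w\|_{L^\infty(\overline{B_2})}\le C\|w\|_{C^\beta(\overline{B_2})}$ and the far piece $|y|>2$ (where $1-\eta=1$) by $C\,\|(1+|x|)^{-n-2s}w(x)\|_{L^1(\R^n)}$. Differentiating under the integral sign any number of times gives the same kind of estimate, so in particular $\tilde h=h-(-\Delta)^sw_2$ is in $C^\beta(\overline{B_1})$ with
\[\|\tilde h\|_{C^\beta(\overline{B_1})}\le C\bigl(\|h\|_{C^\beta(\overline{B_2})}+\|w\|_{C^\beta(\overline{B_2})}+\|(1+|x|)^{-n-2s}w(x)\|_{L^1(\R^n)}\bigr).\]

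Finally, applying Proposition \ref{prop-reg-lin-2} to $w_1$ (which is $C^\infty$ since $w$ is) yields
\[\|w_1\|_{C^{\beta+2s}(\overline{B_{1/2}})}\le C\bigl(\|w_1\|_{C^\beta(\R^n)}+\|\tilde h\|_{C^\beta(\overline{B_1})}\bigr),\]
and since $w\equiv w_1$ on $\overline{B_{1/2}}$ (as $\eta\equiv 1$ there), the desired estimate follows by combining the two bounds above. The only mildly delicate point is handling the tail $|y|>2$: it is there that one must check that the local kernel bound $|x-y|\ge|y|/3$ upgrades the weighted $L^1$ hypothesis into a uniform control of $(-\Delta)^sw_2$ and its derivatives on $\overline{B_1}$; this is the step I expect to require the most care in writing out, although it is purely a bookkeeping estimate on a non-singular integral.
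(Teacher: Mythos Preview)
Your proposal is correct and follows essentially the same approach as the paper: both localize with a cutoff $\eta$ supported in $B_2$ and identically $1$ on $B_{3/2}$, write $(-\Delta)^s(\eta w)=h-(-\Delta)^s((1-\eta)w)$, bound the non-singular tail integral in $C^\beta(\overline{B_1})$ via the weighted $L^1$ norm, and then apply Proposition \ref{prop-reg-lin-2} to $\eta w$. The only cosmetic difference is that the paper bounds the whole tail integral (including the annulus $3/2\le|y|\le 2$) directly by the weighted $L^1$ term, whereas you split off the annulus and absorb it into $\|w\|_{C^\beta(\overline{B_2})}$; either way is fine.
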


\begin{proof}
Let $\eta\in C^\infty(\R^n)$ be such that $\eta\equiv 0$ outside $B_2$ and $\eta\equiv 1$ in $B_{3/2}$. Then $\tilde w:= w\eta\in C^{\infty}(\R^n)$ and $(-\Delta)^s \tilde w = \tilde h := h - (-\Delta)^{s} \bigl(w(1-\eta)\bigr)$.
Note that for $x\in B_{3/2}$ we have
\[(-\Delta)^s \left(w(1-\eta)\right)(x) = c_{n,s}\int_{\R^n\setminus{B_{3/2}}}\frac{ - \bigl(w(1-\eta)\bigr)(y)}{|x-y|^{n+2s}}dy.\]
From this expression we obtain that
\[ \|(-\Delta)^s \left(w(1-\eta)\right) \|_{L^\infty({B_1})}\le C \|(1+|y|)^{-n-2s} w(y)\|_{L^1(\R^n)}\]
and for all $\gamma\in(0,\beta]$,
\[
\begin{split}
[(-\Delta)^s \left(w(1-\eta)\right) ]_{C^\gamma(\overline{B_1})}&\leq C \|(1+|y|)^{-n-2s-\gamma} w(y)\|_{L^1(\R^n)}\\&\le C \|(1+|y|)^{-n-2s} w(y)\|_{L^1(\R^n)}
\end{split}
\]
for some constant $C$ that depends only on $n$, $s$, $\beta$, and $\eta$. Therefore
\[ \|\tilde h\|_{C^{\beta}(\overline {B_{1}})}\le C \bigl(\|h\|_{C^{\beta}(\overline{B_2})} +  \|(1+|x|)^{-n-2s} w(x)\|_{L^1(\R^n)}\bigr), \]
while we also clearly have
\[
\|\tilde w\|_{C^{\beta}(\overline{\R^n})}\le  C\|w\|_{C^{\beta}(\overline{B_2})}\,.
\]
The constants $C$ depend only on $n$, $s$, $\beta$ and $\eta$. Now, we finish the proof by applying Proposition \ref{prop-reg-lin-2} with $w$ replaced by $\tilde w$.
\end{proof}

Finally, the fourth result is the analog of the second one, but instead of assuming $w\in L^\infty(\R^n)$, it only assumes $w\in L^\infty(B_2)$ and $(1+|x|)^{-n-2s}w(x)\in L^1(\R^n)$.

\begin{cor}\label{int-est-brick2}
Assume that $w\in C^\infty(\R^n)$ is a solution of $(-\Delta)^s w = h$ in $B_2$.
Then, for every $\beta\in(0,2s)$,
\[ \|w\|_{C^{\beta}(\overline{B_{1/2}})} \le C\biggl(\|(1+|x|)^{-n-2s}w(x)\|_{L^1(\R^n)} + \|w\|_{L^\infty(B_2)}  + \|h\|_{L^\infty(B_2)}  \biggr)\]
where the constant $C$ depends only on $n$, $s$, and $\beta$.
\end{cor}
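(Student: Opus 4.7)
The plan is to mimic the proof of Corollary \ref{int-est-brick}, replacing the application of Proposition \ref{prop-reg-lin-2} by an application of Proposition \ref{prop-reg-lin-1}. Since Proposition \ref{prop-reg-lin-1} requires only $L^\infty$ bounds on the solution and on the right-hand side, the estimates needed for the cutoff correction become substantially simpler than those in Corollary \ref{int-est-brick}.

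First I would fix a cutoff $\eta\in C^\infty(\R^n)$ with $\eta\equiv 1$ on $B_{3/2}$ and $\eta\equiv 0$ outside $B_2$, and set $\tilde w:=w\eta$. Then $\tilde w\in L^\infty(\R^n)$ with $\|\tilde w\|_{L^\infty(\R^n)}\le C\|w\|_{L^\infty(B_2)}$, and $\tilde w$ coincides with $w$ on $B_{3/2}$. Writing $w=\tilde w+w(1-\eta)$ and applying $(-\Delta)^s$, we get $(-\Delta)^s\tilde w=\tilde h$ in $B_1$, where $\tilde h:=h-(-\Delta)^s(w(1-\eta))$.

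The key step is to estimate the correction term $(-\Delta)^s(w(1-\eta))$ in $L^\infty(B_1)$. Since $1-\eta$ vanishes on $B_{3/2}$, for $x\in B_1$ the only contribution comes from $y\in\R^n\setminus B_{3/2}$, where $|x-y|\ge 1/2$ and $|x-y|$ is comparable to $1+|y|$. Hence, exactly as in the proof of Corollary \ref{int-est-brick},
\[
|(-\Delta)^s(w(1-\eta))(x)|\le c_{n,s}\int_{\R^n\setminus B_{3/2}}\frac{|w(y)|}{|x-y|^{n+2s}}\,dy\le C\|(1+|y|)^{-n-2s}w(y)\|_{L^1(\R^n)},
\]
so that
\[
\|\tilde h\|_{L^\infty(B_1)}\le \|h\|_{L^\infty(B_2)}+C\|(1+|y|)^{-n-2s}w(y)\|_{L^1(\R^n)}.
\]

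Finally, I would apply Proposition \ref{prop-reg-lin-1} to $\tilde w$, which yields, for every $\beta\in(0,2s)$,
\[
\|\tilde w\|_{C^\beta(\overline{B_{1/2}})}\le C\bigl(\|\tilde w\|_{L^\infty(\R^n)}+\|\tilde h\|_{L^\infty(B_1)}\bigr).
\]
Since $w\equiv\tilde w$ on $B_{3/2}\supset\overline{B_{1/2}}$, combining the three displays above gives the stated bound. There is no real obstacle here: the only point to check carefully is the pointwise control of $(-\Delta)^s(w(1-\eta))$, for which the separation of supports and the weighted $L^1$ hypothesis on $w$ do all the work.
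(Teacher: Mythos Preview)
Your proposal is correct and is precisely the argument the paper has in mind: the paper's proof consists of the single line ``Analog to the proof of Corollary \ref{int-est-brick},'' and you have carried out exactly that analogy, replacing the use of Proposition \ref{prop-reg-lin-2} by Proposition \ref{prop-reg-lin-1} and retaining only the $L^\infty$ bound on the cutoff correction.
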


\begin{proof}
Analog to the proof of Corollary \ref{int-est-brick}.
\end{proof}

As a consequence of the previous results we next prove that every solution $u$ of \eqref{eqlin} is $C^s(\R^n)$.
First let us find an explicit upper barrier for $|u|$ to prove that $|u|\le C \delta^s$ in $\Omega$.
This is the first step to obtain the $C^s$ regularity.

To construct this we will need the following result, which is proved in the Appendix.

\begin{lem}[Supersolution]\label{prop:supersolution}
There exist $C_1>0$ and a radial continuous function $\varphi_1\in H^s_{\rm loc}(\R^n)$ satisfying
\begin{equation}\label{eq:propsupersol}
\begin{cases}
(-\Delta)^s \varphi_1 \ge 1 &\mbox{in }B_4\setminus B_1\\
\varphi_1 \equiv 0 \quad &\mbox{in }B_1 \\
0\le\varphi_1 \le C_1(|x|-1)^s &\mbox{in }B_4\setminus B_1\\
1\le \varphi_1 \le C_1 &\mbox{in }\R^n\setminus B_4\,.
\end{cases}
\end{equation}
\end{lem}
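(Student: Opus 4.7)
The plan is to build $\varphi_1$ explicitly from the Getoor solution $u_0(x)=\kappa_{n,s}(1-|x|^2)_+^s$ of $(-\Delta)^s u_0\equiv 1$ in $B_1$, $u_0\equiv 0$ outside (see \cite{G,BGR}), by applying the Kelvin transform $Kf(x)=|x|^{2s-n}f(x/|x|^2)$ about the unit sphere. The classical intertwining identity for the fractional Laplacian,
\[
(-\Delta)^s(Kf)(x) \;=\; |x|^{-n-2s}\bigl((-\Delta)^sf\bigr)(x/|x|^2),
\]
produces the explicit function
\[
v(x) \;:=\; \kappa_{n,s}\,|x|^{-n}(|x|^2-1)_+^{\,s},
\]
which is continuous, lies in $H^s_{\rm loc}(\R^n)$, vanishes on $\overline{B_1}$, and satisfies $(-\Delta)^s v(x)=|x|^{-n-2s}\ge 4^{-n-2s}$ for every $x\in B_4\setminus B_1$. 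Factoring $|x|^2-1=(|x|-1)(|x|+1)$ also yields $v(x)\le C(|x|-1)^s$ on the same annulus.

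The function $v$ has the right behavior inside $B_4$ but lacks the required lower bound at infinity (and, in the case $n=1$, $s>1/2$, is not even bounded there). Both defects are corrected by setting
\[
\varphi_1 \;:=\; \lambda\,\tilde v \,+\, \eta,
\]
where $\tilde v=\min\{v,M\}$ is truncated at a level $M>\max_{\overline{B_4}}v$ so that $\tilde v\equiv v$ in $\overline{B_4}$, and $\eta$ is a smooth radial cutoff with $\eta\equiv 0$ on $\overline{B_1}$, $\eta\equiv 1$ on $\R^n\setminus B_{7/2}$, and $0\le\eta\le 1$. Truncation preserves the supersolution property in the annulus: since $\tilde v\le v$ globally, at any point where $\tilde v(x)=v(x)$ the difference $(-\Delta)^s\tilde v(x)-(-\Delta)^s v(x) = c_{n,s}\int(v(y)-\tilde v(y))/|x-y|^{n+2s}\,dy$ is nonnegative. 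Choosing $\lambda$ so that $\lambda\cdot 4^{-n-2s}\ge 1+\|(-\Delta)^s\eta\|_{L^\infty(B_4)}$ then yields $(-\Delta)^s\varphi_1\ge 1$ in $B_4\setminus B_1$. The bound $\varphi_1\le C_1(|x|-1)^s$ there combines the estimate on $\tilde v$ with $\eta(x)\le C\,{\rm dist}(x,\overline{B_1})\le C(|x|-1)^s$ (splitting $|x|<2$ from $|x|\ge 2$); while on $\R^n\setminus B_4$, $\eta\equiv 1$ and $\lambda\tilde v\ge 0$ give $\varphi_1\ge 1$, and $\varphi_1\le \lambda M+1=:C_1$.

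The only nontrivial ingredient---and I expect it to be the main, essentially the only, obstacle---is the Kelvin transform intertwining identity used in the first step. It is by now standard, verifiable directly from the singular-integral representation of $(-\Delta)^s$ or via the Caffarelli--Silvestre extension, and the rest of the argument is routine bookkeeping with constants.
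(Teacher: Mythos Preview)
Your proposal is correct and follows essentially the same route as the paper: apply the fractional Kelvin transform to the explicit Getoor solution on $B_1$ and then add a smooth radial cutoff to secure the lower bound outside $B_4$. Your extra truncation $\tilde v=\min\{v,M\}$ is a clean way to guarantee the upper bound $\varphi_1\le C_1$ on $\R^n\setminus B_4$ in the borderline case $n=1$, $s\ge 1/2$ (where $|x|^{2s-n}$ does not decay), a point the paper's write-up glosses over.
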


The upper barrier for $|u|$ will be constructed by scaling and translating the supersolution from Lemma \ref{prop:supersolution}.
The conclusion of this barrier argument is the following.

\begin{lem}\label{lem-aux-Cs}
Let $\Omega$ be a bounded domain satisfying the exterior ball condition and let $g\in L^\infty(\Omega)$. Let $u$ be the solution of \eqref{eqlin}. Then,
\[ |u(x)| \le C\|g\|_{L^{\infty}(\Omega)} \delta^s(x)\quad \mbox{for all }x\in\Omega\,,\]
where $C$ is a constant depending only on $\Omega$ and $s$.
\end{lem}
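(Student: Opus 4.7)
The plan is to rescale and translate the supersolution $\varphi_1$ from Lemma~\ref{prop:supersolution} to each boundary point and then apply the maximum principle. Before doing this, I would establish the preliminary global estimate
\[
\|u\|_{L^\infty(\R^n)}\le C(\Omega)\|g\|_{L^\infty(\Omega)}.
\]
This follows by enclosing $\Omega$ in a large ball $B_R$ and using the explicit solution \eqref{explicitsolution2} of $(-\Delta)^s v=\|g\|_\infty$ in $B_R$ (with $v\equiv 0$ outside) as a global barrier via comparison.

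Fix now $x_0\in\partial\Omega$ and let $B_{\rho_0}(y_0)\subset\R^n\setminus\Omega$ be an exterior ball tangent to $\partial\Omega$ at $x_0$, where $\rho_0>0$ is the uniform radius supplied by the exterior ball condition. Set
\[
\psi(x):=\varphi_1\!\left(\frac{x-y_0}{\rho_0}\right),\qquad w(x):=A\,\psi(x),
\]
where $A$ is chosen so that both $A\rho_0^{-2s}\ge\|g\|_{L^\infty(\Omega)}$ and $A\ge\|u\|_{L^\infty(\R^n)}$; by the preliminary bound, $A$ can be taken to be $C\|g\|_{L^\infty(\Omega)}$ with $C$ depending only on $\Omega$ and $s$. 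The scaling $(-\Delta)^s[\varphi(\,\cdot\,/\rho)](x)=\rho^{-2s}(-\Delta)^s\varphi(x/\rho)$ together with \eqref{eq:propsupersol} gives $(-\Delta)^s w\ge \|g\|_{L^\infty(\Omega)}$ on the annulus $B_{4\rho_0}(y_0)\setminus\overline{B_{\rho_0}(y_0)}$, which contains $U:=\Omega\cap B_{4\rho_0}(y_0)$ since the exterior ball is disjoint from $\Omega$.

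Next I would compare $u$ with $w$ on $U$. The difference $v=w-u$ satisfies $(-\Delta)^sv\ge 0$ in $U$, and in $\R^n\setminus U$ one has $v\ge 0$ because (i)~on $\R^n\setminus\Omega$, $u\equiv 0$ while $w\ge 0$, and (ii)~on $\Omega\setminus B_{4\rho_0}(y_0)$ the bound $\varphi_1\ge 1$ outside $B_4$ yields $w\ge A\ge\|u\|_\infty\ge u$. The weak maximum principle for $(-\Delta)^s$ on the bounded set $U$ then gives $u\le w$ in $U$, and the same argument applied to $-u$ yields $|u|\le w$ in $U$. Using $\varphi_1(z)\le C_1(|z|-1)^s$ in $B_4\setminus B_1$ and the triangle inequality $|x-y_0|-\rho_0\le|x-x_0|$, this becomes
\[
|u(x)|\le C\|g\|_{L^\infty(\Omega)}\,|x-x_0|^s\qquad\text{for every }x\in U.
\]

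Finally, since $x_0\in\partial\Omega$ was arbitrary, taking the infimum over $x_0$ gives $|u(x)|\le C\|g\|_{L^\infty(\Omega)}\delta(x)^s$ on a uniform tubular neighborhood of $\partial\Omega$, while on the interior region $\{\delta\ge\rho_0\}$ the claimed estimate follows at once from the global $L^\infty$ bound (with a constant depending on $\rho_0$ and $s$). The key technical issue is to guarantee that all constants are uniform in $x_0$; this is automatic because the exterior ball condition provides a single radius $\rho_0$, so the rescaled barrier has exactly the same shape—up to a translation by $y_0$—at every boundary point.
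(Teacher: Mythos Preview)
Your proof is correct and follows essentially the same approach as the paper: establish the global $L^\infty$ bound by comparison with the explicit ball solution, then at each boundary point rescale and translate the supersolution $\varphi_1$ of Lemma~\ref{prop:supersolution} to build a barrier, apply the maximum principle, and finish with the $L^\infty$ bound in the interior region. The paper's proof is only a sketch; you have filled in exactly the details (the choice of $A$, the verification that $v\ge 0$ on $\R^n\setminus U$, the triangle-inequality step $|x-y_0|-\rho_0\le|x-x_0|$) that the authors leave implicit.
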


In the proof of Lemma \ref{lem-aux-Cs} it will be useful the following

\begin{claim}\label{Linftybound} Let $\Omega$ be a bounded domain and let $g\in L^\infty(\Omega)$. Let $u$ be the solution of \eqref{eqlin}. Then,
\[ \|u\|_{L^\infty(\R^n)} \le C ({\rm diam}\,\Omega)^{2s}\|g\|_{L^{\infty}(\Omega)} \]
where $C$ is a constant depending only on $n$ and $s$.
\end{claim}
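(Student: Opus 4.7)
The plan is to construct an explicit barrier using the closed-form solution \eqref{explicitsolution2} in a ball, and then invoke a weak comparison principle. Without loss of generality, fix $x_0\in\Omega$ and let $R=\mathrm{diam}\,\Omega$, so that $\Omega\subset B_R(x_0)$. By \eqref{explicitsolution2} the function
\[
v_0(x) = \kappa_{n,s}\bigl(R^2-|x-x_0|^2\bigr)_+^s, \qquad \kappa_{n,s}=\frac{2^{-2s}\Gamma(n/2)}{\Gamma\!\left(\tfrac{n+2s}{2}\right)\Gamma(1+s)},
\]
satisfies $(-\Delta)^s v_0 = 1$ in $B_R(x_0)$ and $v_0\equiv 0$ in $\R^n\setminus B_R(x_0)$. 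Setting $v = \|g\|_{L^\infty(\Omega)}\, v_0$, we obtain $(-\Delta)^s v = \|g\|_{L^\infty(\Omega)}$ in $B_R(x_0)$, and in particular in $\Omega$, while $v\ge 0$ everywhere.

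Next I would compare $u$ with $\pm v$ by a standard weak maximum principle argument. The functions $w^\pm := v \pm u$ lie in $H^s(\R^n)$, are nonnegative in $\R^n\setminus\Omega$ (since $u\equiv 0$ there and $v\ge 0$), and satisfy $(-\Delta)^s w^\pm = \|g\|_{L^\infty(\Omega)}\pm g \ge 0$ in $\Omega$ in the weak sense of Definition \ref{weak}. Testing with $(w^\pm)_-\in H^s(\R^n)$, which vanishes outside $\Omega$ and is therefore an admissible test function, yields
\[
\int_{\R^n}\bigl|(-\Delta)^{s/2}(w^\pm)_-\bigr|^2\,dx \le 0,
\]
hence $(w^\pm)_-\equiv 0$ and $|u|\le v$ in $\R^n$.

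Finally, on $B_R(x_0)$ one has $v\le \kappa_{n,s}\,\|g\|_{L^\infty(\Omega)} R^{2s}$, so
\[
\|u\|_{L^\infty(\R^n)}\le \kappa_{n,s}\bigl(\mathrm{diam}\,\Omega\bigr)^{2s}\|g\|_{L^\infty(\Omega)},
\]
which is the desired estimate with $C=\kappa_{n,s}$. The only delicate point is the comparison step: one must justify the Stampacchia-type argument with $(w^\pm)_-$ as test function, taking advantage of the fact that $v$ solves \eqref{eqlin} on the larger domain $B_R(x_0)\supset\Omega$ so that $(-\Delta)^s w^\pm$ has the correct sign on $\Omega$ and $w^\pm\ge 0$ outside. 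This is routine given the bilinear formulation in Definition \ref{weak} and the fact that $v\in H^s(\R^n)$ (which follows from the explicit formula since $s<1$).
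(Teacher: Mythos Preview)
Your argument is correct and follows essentially the same approach as the paper: enclose $\Omega$ in a ball of radius $\mathrm{diam}\,\Omega$, use the explicit solution \eqref{explicitsolution2} (scaled by $\|g\|_{L^\infty}$) as a barrier, and apply the comparison principle. The paper's proof is just two sentences and leaves the comparison step implicit, whereas you have spelled out the Stampacchia test-function argument; this added detail is fine and indeed routine, since $(w^\pm)_-$ vanishes outside $\Omega$ and the cross term $\int(-\Delta)^{s/2}w_+^\pm\,(-\Delta)^{s/2}w_-^\pm$ is nonpositive by the pointwise sign of the integrand in the Gagliardo form.
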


\begin{proof}
The domain $\Omega$ is contained in a large ball of radius ${\rm diam}\,\Omega$. Then, by scaling the explicit (super)solution for the ball given by \eqref{explicitsolution2} we obtain the desired bound.
\end{proof}

We next give the

\begin{proof}[Proof of Lemma \ref{lem-aux-Cs}]
Since $\Omega$ satisfies the exterior ball condition, there exists $\rho_0>0$ such that
every point of $\partial\Omega$ can be touched from outside by a ball of radius $\rho_0$.
Then, by scaling and translating the supersolution $\varphi_1$ from Lemma \ref{prop:supersolution}, for each of this exterior tangent balls $B_{\rho_0}$ we find an upper barrier in $B_{2\rho_0}\setminus B_{\rho_0}$ vanishing in $\overline{B_{\rho_0}}$.
This yields the bound $u\le C\delta^s$ in a $\rho_0$-neighborhood of $\partial\Omega$. By using Claim \ref{Linftybound} we have the same bound in all of $\overline\Omega$. Repeating the same argument with $-u$ we find $|u|\le C\delta^s$, as wanted.
\end{proof}

The following lemma gives interior estimates for $u$ and yields, as a corollary, that every bounded weak solution $u$ of \eqref{eqlin} in a $C^{1,1}$ domain is $C^s(\R^n)$.

\begin{lem}\label{lem-sarp-Cs-bounds-u}
Let $\Omega$ be a bounded domain satisfying the exterior ball condition, $g\in L^\infty(\Omega)$, and $u$ be the solution of \eqref{eqlin}. Then, $u \in C^\beta(\Omega)$ for all $\beta\in (0,2s)$ and for all $x_0\in \Omega$ we have the following seminorm estimate in $B_R(x_0)=B_{\delta(x_0)/2}(x_0)$:
\begin{equation}\label{first-seminorm-estimate-u}
[u]_{C^\beta(\overline{B_{R}(x_0)})}\le C R^{s-\beta}\|g\|_{L^{\infty}(\Omega)},
\end{equation}
where $C$ is a constant depending only on $\Omega$, $s$, and $\beta$.
\end{lem}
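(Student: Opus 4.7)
The plan is to reduce the estimate to a unit-size problem by rescaling at scale $R=\delta(x_0)/2$, apply the interior estimate of Corollary \ref{int-est-brick2} to get the bound on $B_{R/2}(x_0)$, and finally extend it to $B_R(x_0)$ by a standard covering argument. By the interior regularity theory for $(-\Delta)^s$ (iterated application of Proposition \ref{prop-reg-lin-1}), $u$ is $C^\infty$ inside $\Omega$, so the corollary can be applied to smooth approximations and the estimate passes to the limit.

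Fix $x_0\in\Omega$, set $R=\delta(x_0)/2$, and define the rescaled function $\tilde u(y)=u(x_0+Ry)$. Because $B_{2R}(x_0)=B_{\delta(x_0)}(x_0)\subset\Omega$, the rescaled equation
\[
(-\Delta)^s\tilde u(y)= R^{2s}g(x_0+Ry)=:\tilde g(y)
\]
holds pointwise on $B_2$, with $\|\tilde g\|_{L^\infty(B_2)}\le R^{2s}\|g\|_{L^\infty(\Omega)}$. For $y\in B_2$ we have $\delta(x_0+Ry)\le\delta(x_0)+R|y|\le 4R$, so by Lemma \ref{lem-aux-Cs},
\[
\|\tilde u\|_{L^\infty(B_2)}\le CR^s\|g\|_{L^\infty(\Omega)}.
\]

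For the weighted tail, I would change variables $z=x_0+Ry$ and use $\delta(z)\le 2(R+|z-x_0|)$ together with Lemma \ref{lem-aux-Cs} to obtain
\[
\int_{\R^n}\frac{|\tilde u(y)|}{(1+|y|)^{n+2s}}\,dy
= R^{2s}\!\int_{\Omega}\frac{|u(z)|}{(R+|z-x_0|)^{n+2s}}\,dz
\le CR^{2s}\|g\|_{L^\infty(\Omega)}\!\int_{\R^n}\frac{dz}{(R+|z-x_0|)^{n+s}}.
\]
The last integral equals $CR^{-s}$ by the substitution $z-x_0=Rw$, so the weighted $L^1$-norm is $\le CR^s\|g\|_{L^\infty(\Omega)}$. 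Plugging these three bounds into Corollary \ref{int-est-brick2} and absorbing $R^{2s}$ into $R^s$ (using $R\le\mathrm{diam}\,\Omega$) gives $\|\tilde u\|_{C^\beta(\overline{B_{1/2}})}\le CR^s\|g\|_{L^\infty(\Omega)}$. Undoing the scaling (the seminorm rescales by $R^\beta$) yields
\[
[u]_{C^\beta(\overline{B_{R/2}(x_0)})}\le CR^{s-\beta}\|g\|_{L^\infty(\Omega)}.
\]

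Finally, I would upgrade the bound from $B_{R/2}(x_0)$ to $B_R(x_0)$: for every $x_1\in B_R(x_0)$ one has $\delta(x_1)\ge R$, so the same rescaling argument centered at $x_1$ gives the identical estimate on a ball of comparable radius around $x_1$. Covering $B_R(x_0)$ by a dimension-dependent number of such smaller balls and patching the local seminorms (the standard distinction: pairs $x,y$ lying in a common patch are controlled by the local estimate, while far-apart pairs with $|x-y|\gtrsim R$ are handled by the pointwise $L^\infty$ and, when $\beta\ge1$, $C^{k}$ bounds coming from the same scaling) finishes the proof. The delicate step is the weighted $L^1$ bound: it is essential to use the sharp pointwise bound $|u|\le C\delta^s\|g\|_{L^\infty(\Omega)}$ of Lemma \ref{lem-aux-Cs} rather than the crude $L^\infty$ bound from Claim \ref{Linftybound}, otherwise one obtains the wrong power of $R$ and loses the desired scaling $R^{s-\beta}$.
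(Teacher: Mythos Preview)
Your argument is correct and follows essentially the same route as the paper's proof: rescale, use Lemma~\ref{lem-aux-Cs} to control both $\|\tilde u\|_{L^\infty(B_2)}$ and the weighted tail $\|(1+|y|)^{-n-2s}\tilde u\|_{L^1(\R^n)}$, apply Corollary~\ref{int-est-brick2}, scale back, and cover. The only cosmetic difference is that the paper bounds the tail by observing directly $|\tilde u(y)|\le C R^s(1+|y|^s)$ and integrating, whereas you change variables back to $z$-space first; the two computations are equivalent. One small wording quibble: your opening sentence invokes iterated interior regularity to claim $u\in C^\infty(\Omega)$, but Propositions~\ref{prop-reg-lin-1}--\ref{prop-reg-lin-2} are stated for $w\in C^\infty(\R^n)$, so the clean justification (which you also allude to) is the paper's: mollify $u$, obtain the estimates for $u\ast\eta_\epsilon$, and pass to the limit.
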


\begin{proof} Recall that if $u$ solves \eqref{eqlin} in the weak sense and $\eta_\epsilon$ is the standard mollifier then $(-\Delta)^s(u\ast \eta_\epsilon)=g\ast \eta_\epsilon$ in $B_R$ for $\epsilon$ small enough. Hence, we can regularize $u$, obtain the estimates, and then pass to the limit. In this way we may assume that $u$ is smooth.

Note that $B_R(x_0)\subset B_{2R}(x_0)\subset \Omega$. Let $\tilde u(y)= u(x_0+Ry)$. We have that
\begin{equation}\label{Rs1}
(-\Delta)^s \tilde u(y) =  R^{2s} g(x_0+Ry)\quad \mbox{in }  B_1\,.
\end{equation}
Furthermore, using that $|u| \le C\bigl(\|u\|_{L^{\infty}(\R^n)}+ \|g\|_{L^{\infty}(\Omega)}\bigr) \delta^s$ in $\Omega$ ---by Lemma \ref{lem-aux-Cs}--- we obtain
\begin{equation}\label{Rs2}
\|\tilde u\|_{L^\infty(B_1)}\le C\bigl(\|u\|_{L^{\infty}(\R^n)}+ \|g\|_{L^{\infty}(\Omega)}\bigr)  R^s
\end{equation}
and, observing that $|\tilde u(y)|\le C\bigl(\|u\|_{L^{\infty}(\R^n)}+ \|g\|_{L^{\infty}(\Omega)}\bigr) R^s(1+|y|^s)$ in all of $\R^n$,
\begin{equation}\label{Rs3}
\|(1+|y|)^{-n-2s}\tilde u(y)\|_{L^1(\R^n)}\le C \bigl(\|u\|_{L^{\infty}(\R^n)}+ \|g\|_{L^{\infty}(\Omega)}\bigr)  R^s,
\end{equation}
with $C$ depending only on $\Omega$ and $s$.

Next we use Corollary \ref{int-est-brick2}, which taking into account \eqref{Rs1}, \eqref{Rs2}, and \eqref{Rs3}, yields
\[
\|\tilde u\|_{C^{\beta}\left(\overline{B_{1/4}}\right)} \le C \bigl(\|u\|_{L^{\infty}(\R^n)}+ \|g\|_{L^{\infty}(\Omega)}\bigr)R^s
\]
for all $\beta\in(0,2s)$, where $C=C(\Omega,s,\beta)$.

Finally, we observe that
\[[u]_{C^\beta\left(\overline{B_{R/4}(x_0)}\right)}=R^{-\beta}[\tilde u]_{C^\beta\left(\overline{B_{1/4}}\right)}.\]
Hence, by an standard covering argument, we find the estimate \eqref{first-seminorm-estimate-u} for the $C^\beta$ seminorm of $u$ in $\overline {B_{R}(x_0)}$.
\end{proof}

We now prove the $C^s$ regularity of $u$.

\begin{proof}[Proof of Proposition \ref{prop:u-is-Cs}]
By Lemma \ref{lem-sarp-Cs-bounds-u}, taking $\beta=s$ we obtain
\begin{equation}\label{cotaCsenboles}
\frac{|u(x)-u(y)|}{|x-y|^s}\leq C\bigl( \|u\|_{L^\infty(\R^n)}+ \|g\|_{L^\infty(\Omega)}\bigr)
\end{equation}
for all $x,y$ such that $y\in B_R(x)$ with $R=\delta(x)/2$. We want to show that \eqref{cotaCsenboles} holds, perhaps with a bigger constant $C= C(\Omega,s)$, for all $x,y\in \overline\Omega$, and hence for all $x,y\in \R^n$ (since $u\equiv 0$ outside $\Omega$).

Indeed, observe that after a Lipschitz change of coordinates, the bound \eqref{cotaCsenboles} remains the same except for the value of the constant $C$. Hence, we can flatten the boundary near $x_0\in\partial\Omega$ to assume that $\Omega\cap B_{\rho_0}(x_0)=\{x_n>0\}\cap B_1(0)$. Now, \eqref{cotaCsenboles} holds for all $x,y$ satisfying $|x-y|\le \gamma x_n $ for some $\gamma =\gamma(\Omega)\in (0,1)$ depending on the Lipschitz map.

Next, let $z=(z',z_n)$ and $w=(w',w_n)$ be two points in $\{x_n>0\}\cap B_{1/4}(0)$, and $r=|z-w|$. Let us define $\bar z=(z',z_n+r)$, $\bar z=(z',z_n+r)$ and $z_k = (1-\gamma^k) z + \gamma^k\bar z$ and $w_k = \gamma^k w + (1-\gamma^k)\bar w$, $k\ge0$.
Then, using that bound \eqref{cotaCsenboles} holds whenever $|x-y|\le \gamma x_n $, we have
\[|u(z_{k+1})-u(z_{k})|\le C|z_{k+1}-z_k|^s = C|\gamma^{k}(z-\bar z)(\gamma-1)|^s\leq C\gamma^k|z-\bar z|. \]
Moreover, since $x_n>r$ in all the segment joining $\bar z$ and $\bar w$, splitting this segment into a bounded number of segments of length less than $\gamma r$, we obtain
\[|u(\bar z)-u(\bar w)|\leq C|\bar z-\bar w|^s\leq Cr^s.\]
Therefore,
\[
\begin{split}|u(z)-u(w)|&\leq \sum_{k\ge 0} |u(z_{k+1})-u(z_{k})|+|u(\bar z)-u(\bar w)|+\sum_{k\geq0} |u(w_{k+1})-u(w_{k})|\\
&\leq \left( C \sum_{k\ge 0} \bigl(\gamma^{k} r\bigr)^s + C r^s \right) \bigl( \|u\|_{L^\infty(\R^n)}+ \|g\|_{L^\infty(\Omega)}\bigr)\\
&\le C\bigl( \|u\|_{L^\infty(\R^n)}+ \|g\|_{L^\infty(\Omega)}\bigr) |z-w|^s,
\end{split}
\]
as wanted.
\end{proof}

The following lemma is similar to Proposition \ref{prop-reg-lin-2} but it involves the weighted norms introduced above. It will be used to prove Proposition \ref{prop:int-est-u} and Theorem \ref{thm:int-est-v}.

\begin{lem} \label{refined-2s-gain}
Let $s$ and $\alpha$ belong to $(0,1)$, and $\beta>0$. Let $U$ be an open set with nonempty boundary. Assume that neither $\beta$ nor $\beta+2s$ is an integer, and $\alpha<2s$. Then,
\begin{equation}\label{eq:2s-derivatives-more}
\|w\|_{\beta+2s;U}^{(-\alpha)}\le C\biggl( \|w\|_{C^{\alpha}(\R^n)}+ \|(-\Delta)^s w\|_{\beta;U}^{(2s-\alpha)}\biggr)
\end{equation}
for all $w$ with finite right hand side.
The constant $C$ depends only on $n$, $s$, $\alpha$, and $\beta$.
\end{lem}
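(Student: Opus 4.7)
I would prove the weighted inequality by reducing it, at each point $x_0 \in U$, to a standard interior $C^{\beta+2s}$ estimate for a rescaled copy of $w$. Set $R := d_{x_0}/c_0$ for a fixed constant $c_0$ (to be chosen large enough below), so that $\overline{B_{c_0 R}(x_0)} \subset U$, and define
\[
\tilde w(y) := R^{-\alpha}\bigl(w(x_0+Ry) - w(x_0)\bigr), \qquad y\in\R^n.
\]
Then $\tilde w(0)=0$, $D^l\tilde w(y)=R^{l-\alpha}D^l w(x_0+Ry)$ for $l\ge 1$, and $(-\Delta)^s\tilde w(y)=R^{2s-\alpha}(-\Delta)^s w(x_0+Ry)$. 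The rescaling exponent $-\alpha$ is chosen precisely so that the weighted quantities comprising $\|w\|_{\beta+2s;U}^{(-\alpha)}$, evaluated at $x_0$ and for pairs of points in $B_{R/2}(x_0)$, become equivalent up to universal constants to standard $C^{\beta+2s}$ quantities for $\tilde w$ on $\overline{B_{1/2}}$. Therefore it suffices to establish
\[
\|\tilde w\|_{C^{\beta+2s}(\overline{B_{1/2}})}\le CK, \qquad K:=\|w\|_{C^\alpha(\R^n)}+\|(-\Delta)^s w\|_{\beta;U}^{(2s-\alpha)},
\]
with $C$ independent of $x_0$; pairs of points $x,y\in U$ at macroscopic distance are then handled by a standard chaining that uses the pointwise bounds $|D^l w(x)|\le CK\,d_x^{\alpha-l}$ produced by the same rescaling.

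To deduce the displayed inequality I would apply Corollary \ref{int-est-brick} to $\tilde w$ on $B_2$. Two of the three resulting input terms are immediately bounded by $CK$ in a scale-invariant way. First, $|\tilde w(y)|\le[w]_{C^\alpha(\R^n)}|y|^\alpha$ gives
\[
\|(1+|y|)^{-n-2s}\tilde w\|_{L^1(\R^n)}\le C[w]_{C^\alpha(\R^n)},
\]
where the convergence of $\int(1+|y|)^{-n-2s}|y|^\alpha\,dy$ relies on precisely the hypothesis $\alpha<2s$. Second, because $d_x\asymp R$ on $\overline{B_{c_0R}(x_0)}$, the definition of the weighted norm yields
\[
\|(-\Delta)^s\tilde w\|_{C^\beta(\overline{B_2})}\le C\|(-\Delta)^s w\|_{\beta;U}^{(2s-\alpha)}.
\]
The proof therefore hinges on the remaining intermediate input $\|\tilde w\|_{C^\beta(\overline{B_2})}$, which in the easy case $\beta\le\alpha$ is $\le CK$ directly from $\|\tilde w\|_{C^\alpha(\R^n)}\le C[w]_{C^\alpha(\R^n)}$ and the embedding $C^\alpha\subset C^\beta$.

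The main obstacle is controlling $\|\tilde w\|_{C^\beta(\overline{B_2})}$ when $\beta>\alpha$, since only $C^\alpha$ information on $\tilde w$ is a priori available. I plan to bootstrap this by applying the interior estimates iteratively to $\tilde w$: one application of Corollary \ref{int-est-brick2} (centered at various points to cover $B_2$ by smaller balls, which forces taking $c_0$ sufficiently large) raises the regularity of $\tilde w$ from $C^\alpha$ to $C^\gamma$ for any $\gamma<2s$, and finitely many further applications of Corollary \ref{int-est-brick} then raise the Hölder exponent by $2s$ at each step, always controlled by the already established $C^\beta$ bound on $(-\Delta)^s\tilde w$, until the regularity exceeds $\beta$. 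After $O(1+\beta/2s)$ iterations one obtains $\|\tilde w\|_{C^\beta(\overline{B_2})}\le CK$, and a final application of Corollary \ref{int-est-brick} closes the argument. The technical bookkeeping --- perturbing intermediate Hölder exponents to avoid integers, and absorbing the shrinking of balls through the iteration by inflating $c_0$ --- is routine, and the smoothness assumption in the interior estimates is dealt with by a standard mollification.
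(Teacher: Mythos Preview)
Your approach is essentially the same as the paper's: fix $x_0\in U$, rescale by $R=d_{x_0}/2$ (the paper does not introduce an extra factor $c_0$), bound the weighted $L^1$ tail using $\alpha<2s$, bound $\|(-\Delta)^s\tilde w\|_{C^\beta}$ via the weighted norm, apply Corollary~\ref{int-est-brick}, and then globalize to pairs $x,y$ with $|x-y|\ge d_x/2$ by elementary inequalities using the pointwise bounds on $D^l w$. Your Step~2 chaining is exactly the paper's Step~2.

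The one place where you go beyond the paper is the input term $\|\tilde w\|_{C^\beta(\overline{B_2})}$ required by Corollary~\ref{int-est-brick}. The paper's write-up only verifies $\|\tilde w\|_{C^\alpha(B_1)}\le R^\alpha[w]_{C^\alpha(\R^n)}$ and then invokes the corollary directly, so this intermediate bound is not spelled out there. Your bootstrap --- one application of Corollary~\ref{int-est-brick2} to pass from $C^\alpha$ to $C^\gamma$ for some $\gamma<2s$, followed by finitely many applications of Corollary~\ref{int-est-brick} (using that $(-\Delta)^s\tilde w\in C^\beta\subset C^\gamma$) to climb $2s$ at a time --- is a correct way to supply this missing input, and your device of enlarging $c_0$ to absorb the ball-shrinking is fine. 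So your proposal is correct and in fact makes explicit a step the paper takes for granted.
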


\begin{proof} \emph{Step 1.} We first control the $C^{\beta+2s}$ norm of $w$ in balls $B_R(x_0)$ with $R=d_{x_0}/2$.

Let $x_0\in U$ and $R=d_{x_0}/2$. Define $\tilde w(y)= w(x_0+Ry)-w(x_0)$ and note that
\[ \|\tilde w\|_{C^\alpha(B_1)}\le R^\alpha [w]_{C^{\alpha}(\R^n)}\]
and
\[  \|(1+|y|)^{-n-2s} \tilde w(y)\|_{L^1(\R^n)} \le C(n,s) R^\alpha [w]_{C^{\alpha}(\R^n)}.\]
This is because
\[|\tilde w(y)| = |w(x_0+Ry)-w(x_0)|\le R^\alpha |y|^\alpha [w]_{C^{\alpha}(\R^n)}\]
and $\alpha< 2s$. Note also that
\[ \|(-\Delta)^s\tilde w\|_{C^\beta(\overline{B_1})}= R^{2s+\beta}\|(-\Delta)^s w\|_{C^\beta(\overline{B_R(x_0)})} \le R^\alpha\|(-\Delta)^s w\|_{\beta;U}^{(2s-\alpha)}\,.\]

Therefore, using Corollary \ref{int-est-brick} we obtain that
\[ \|\tilde w\|_{C^{\beta+2s}(\overline{B_{1/2}})}  \le C R^\alpha\bigl([w]_{C^{\alpha}(\R^n)}+ \|(-\Delta)^s w\|_{\beta;U}^{(2s-\alpha)}\bigr),\]
where the constant $C$ depends only on $n$, $s$, $\alpha$, and $\beta$. Scaling back we obtain
\begin{equation}\label{dos}
\begin{split}
\sum_{l=1}^k R^{l-\alpha} \|D^l w\|_{L^\infty(B_{R/2}(x_0))} + R^{2s+\beta-\alpha} [w]_{C^{\beta+2s}(\overline{B_{R/2}(x_0)})}\le \\  \le C\bigl(\|w\|_{C^{\alpha}(\R^n)}+  \|(-\Delta)^s w\|_{\alpha;U}^{(2s-\alpha)}\bigr),
\end{split}
\end{equation}
where $k$ denotes the greatest integer less that $\beta+2s$ and $C=C(n,s)$.
This bound holds, with the same constant $C$, for each ball $B_R(x_0)$, $x_0\in U$, where $R=d_{x_0}/2$.

\emph{Step 2.} Next we claim that if \eqref{dos} holds for each ball $B_{d_{x}/2}(x)$, $x\in U$, then \eqref{eq:2s-derivatives-more} holds. It is clear that this already yields
\begin{equation}\label{cotaambk}
 \sum_{l=1}^k d_x^{k-\alpha} \sup_{x\in U} |D^k u(x)|\le C\biggl( \|w\|_{C^{\alpha}(\R^n)}+ \|(-\Delta)^s w\|_{\beta;U}^{(2s-\alpha)}\biggr)
\end{equation}
where $k$ is the greatest integer less than $\beta+2s$.

To prove this claim we only have to control $[w]_{\beta+2s;U}^{(-\alpha)}$ ---see Definition \ref{definorm}. Let $\gamma\in(0,1)$ be such that $\beta+2s=k+\gamma$. We next bound
\[\frac{|D^k w(x)- D^k w(y)|}{|x-y|^{\gamma}}\]
when $d_x\geq d_y$ and $|x-y|\ge d_x/2$. This will yield the bound for $[w]_{\beta+2s;U}^{(-\alpha)}$, because if $|x-y|<d_x/2$ then $y\in B_{d_{x}/2}(x)$, and that case is done in Step 1.

We proceed differently in the cases $k=0$ and $k\ge 1$.
If $k=0$, then
\[d_x^{\beta+2s-\alpha}\frac{w(x)-w(y)}{|x-y|^{2s+\beta}}
=\left(\frac{d_x}{|x-y|}\right)^{\beta+2s-\alpha}\frac{w(x)-w(y)}{|x-y|^{\alpha}}\leq C\|w\|_{C^\alpha(\R^n)}.\]
If $k\ge 1$, then
\[d_x^{\beta+2s-\alpha}\frac{|D^k w(x)- D^k w(y)|}{|x-y|^{\gamma}}\le \biggl(\frac{d_x}{|x-y|}\biggr)^{\gamma}
d_x^{\beta+2s-\alpha-\gamma}|D^k w(x)- D^k w(y)| \le C \|w\|_{k;U}^{(-\alpha)}\,,\]
where we have used that $\beta+2s-\alpha-\gamma= k-\alpha$.

Finally, noting that for $x\in B_R(x_0)$ we have $R\leq d_{x_0}\leq 3R$, \eqref{eq:2s-derivatives-more} follows from \eqref{dos}, \eqref{cotaambk} and the definition of $\|w\|_{\alpha+2s;U}^{(-\alpha)}$ in \eqref{definorm}.
\end{proof}

Finally, to end this section, we prove Proposition \ref{prop:int-est-u}.

\begin{proof}[Proof of Proposition \ref{prop:int-est-u}]
Set $\alpha=s$ in Lemma \ref{refined-2s-gain}.
\end{proof}

\begin{rem}\label{remviscosity}
When $g$ is continuous, the notions of bounded weak solution and viscosity solution of \eqref{eqlin} ---and hence of \eqref{eqnonlin}--- coincide.

Indeed, let $u\in H^s(\R^n)$ be a weak solution of \eqref{eqlin}. Then, from Proposition \ref{prop:u-is-Cs} it follows that $u$ is continuous up to the boundary.
Let $u_{\varepsilon}$ and $g_{\varepsilon}$ be the standard regularizations of $u$ and $g$ by convolution with a mollifier. It is immediate to verify that, for $\varepsilon$ small enough, we have $(-\Delta)^s u_{\varepsilon}= g_\varepsilon$ in every subdomain $U\subset\subset\Omega$ in the classical sense.
Then, noting that $u_\varepsilon \to u$ and $g_\varepsilon \to g$ locally uniformly in $\Omega$, and applying the stability property for viscosity solutions \cite[Lemma 4.5]{CS}, we find that $u$ is a viscosity solution of \eqref{eqlin}.

Conversely, every viscosity solution of \eqref{eqlin} is a weak solution. This follows from three facts: the existence of weak solution, that this solution is a viscosity solution as shown before, and the uniqueness of viscosity solutions \cite[Theorem 5.2]{CS}.

As a consequence of this, if $g$ is continuous, any viscosity solution of \eqref{eqlin} belongs to $H^s(\R^n)$ ---since it is a weak solution.
This fact, which is not obvious, can also be proved without using the result on uniqueness of viscosity solutions.
Indeed, it follows from Proposition \ref{prop:int-est-u} and Lemma \ref{remlog}, which yield a stronger fact: that $(-\Delta)^{s/2}u\in L^p(\R^n)$ for all $p<\infty$.
Note that although we have proved Proposition \ref{prop:int-est-u} for weak solutions, its  proof is also valid ---with almost no changes--- for viscosity solutions.
\end{rem}

\section{Boundary regularity}
\label{sec4}

In this section we study the precise behavior near the boundary of the solution $u$ to problem \eqref{eqlin}, where $g\in L^\infty(\Omega)$.
More precisely, we prove that the function $u/\delta^s|_\Omega$ has a $C^{\alpha}(\overline\Omega)$ extension. This is stated in Theorem \ref{thm:v-is-Calpha}.

This result will be a consequence of the interior regularity results of Section \ref{sec3} and an oscillation lemma near the boundary, which can be seen as the nonlocal analog of Krylov's boundary Harnack principle; see Theorem 4.28 in \cite{Kazdan}.

The following proposition and lemma will be used to establish Theorem \ref{thm:v-is-Calpha}. They are proved in the Appendix.

\begin{prop}[1-D solution in half space, \cite{CRS}] \label{prop:solution}
The function $\varphi_0$, defined by
\begin{equation}
\varphi_0(x) =
\begin{cases}
0  \quad & \mbox{if } x\le 0\\
x^s & \mbox{if } x\ge 0\,,
\end{cases}
\end{equation}
satisfies $(-\Delta)^s\varphi_0 = 0$ in $\R_+$.
\end{prop}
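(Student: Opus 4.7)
My plan is to exploit the homogeneity of $\varphi_0$ to reduce the problem to a single integral, and then compute that integral via a M\"obius substitution that respects the scaling structure of $(-\Delta)^s$ on the half-line.

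\emph{Reduction via homogeneity.} Since $\varphi_0(\lambda x) = \lambda^s \varphi_0(x)$ for $\lambda>0$ and $(-\Delta)^s$ satisfies the scaling $(-\Delta)^s [f(\lambda\,\cdot)](x) = \lambda^{2s}\,[(-\Delta)^s f](\lambda x)$, the function $(-\Delta)^s \varphi_0$ is homogeneous of degree $-s$ on each of the half-lines $\{x>0\}$ and $\{x<0\}$. In particular, for $x>0$ we have $(-\Delta)^s \varphi_0(x) = x^{-s}\,(-\Delta)^s \varphi_0(1)$, so it is enough to prove that $(-\Delta)^s \varphi_0(1) = 0$. Before starting, one should also verify that $\varphi_0$ satisfies the local regularity and global growth conditions which make the principal-value integral well defined at $x=1$; this is routine since $\varphi_0 \in C^\infty((0,\infty))$ and $|\varphi_0(y)| \le (1+|y|)^s$.

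\emph{Evaluating the PV integral at $x=1$.} Write
\[
(-\Delta)^s \varphi_0(1) = c_{1,s}\,\mathrm{PV}\int_{\R} \frac{1 - y_+^s}{|1-y|^{1+2s}}\,dy = c_{1,s}(I_- + I_1 + I_2),
\]
where $I_-$, $I_1$, $I_2$ are the integrals on $(-\infty,0)$, $(0,1)$, $(1,\infty)$. The first piece is elementary: $I_- = \int_{-\infty}^0 (1-y)^{-1-2s}\,dy = 1/(2s)$. The key trick is the inversion $y = 1/t$ on $(1,\infty)$, which uses the conformal invariance of the half-line problem and maps $I_2$ to
\[
I_2 = \int_0^1 \frac{t^{2s-1}-t^{s-1}}{(1-t)^{1+2s}}\,dt.
\]
Combining with $I_1$ and factoring the numerator gives
\[
I_1+I_2 = \mathrm{PV}\int_0^1 \frac{(1-t^s)(1-t^{s-1})}{(1-t)^{1+2s}}\,dt,
\]
whose integrand is $O((1-t)^{1-2s})$ near $t=1$ since both factors in the numerator vanish linearly there.

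\emph{Final evaluation.} Expanding the numerator as $1 - t^s - t^{s-1} + t^{2s-1}$ and regularizing by truncating at $t\le 1-\epsilon$, each piece becomes a Beta integral $\int_0^{1-\epsilon} t^{a-1}(1-t)^{-b}\,dt$; the individually divergent $\epsilon\to 0$ contributions cancel, leaving a sum of $B(a,b) = \Gamma(a)\Gamma(b)/\Gamma(a+b)$'s which, after an application of the reflection formula $\Gamma(z)\Gamma(1-z) = \pi/\sin(\pi z)$, collapses to $-1/(2s)$ and cancels $I_-$.

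The main obstacle is the bookkeeping in the last step, since the $\epsilon$-regularization must be handled carefully so that the divergent $\log\epsilon$ or $\epsilon^{-2s}$ contributions annihilate each other before taking the limit. A clean alternative that avoids the special-function calculation entirely is to start from the explicit 1-D case of \eqref{explicitsolution2}: for $x_0 = r$ the function $u_r(x) = c_*(x(2r-x))_+^s$ solves $(-\Delta)^s u_r = 1$ on $(0,2r)$. Dividing by $(2r)^s$, one has $u_r(x)/(2r)^s \to c_*\,\varphi_0(x)$ pointwise on $\R$ while $(-\Delta)^s(u_r/(2r)^s) = (2r)^{-s} \to 0$; a dominated-convergence argument on the PV integral---using the uniform bound $|u_r(y)|/(2r)^s \le c_* y_+^s$ and the integrability of $y_+^s(1+|y|)^{-1-2s}$ for $s\in(0,1)$---then yields $(-\Delta)^s \varphi_0 \equiv 0$ on $(0,\infty)$, which is the cleanest route given the tools already in the paper.
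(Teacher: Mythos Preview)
Your argument is correct but follows a different route from the paper. The paper's proof is a two-line application of the Caffarelli--Silvestre extension: the explicit function $U(x,y)=r^s\cos(\theta/2)^{2s}$ solves $\mathrm{div}(y^{1-2s}\nabla U)=0$ in the upper half-plane, has trace $\varphi_0$ on $\{y=0\}$, and its weighted normal derivative $y^{1-2s}\partial_yU$ vanishes on $\{x>0\}$, which via the extension characterization is exactly the statement $(-\Delta)^s\varphi_0=0$ there. You instead compute the one-dimensional singular integral directly. The homogeneity reduction and the inversion $y\mapsto1/y$ are the right moves; the Beta-integral identity does collapse to $-1/(2s)$ (in fact one only needs the recurrence $\Gamma(z+1)=z\Gamma(z)$: the middle terms $B(s+1,-2s)$ and $B(s,-2s)$ cancel, $B(1,-2s)=-1/(2s)$, and $B(2s,-2s)=0$), so your first route is complete once the analytic continuation is written out. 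Your second route, passing to the limit from the rescaled ball solutions \eqref{explicitsolution2}, is also valid and entirely self-contained within the paper; the dominated-convergence step works as you say once one symmetrizes near the singularity and uses the uniform $C^2$ bound on $u_r/(2r)^s$ over compact subsets of $(0,\infty)$. The paper's extension argument is shorter and more conceptual but imports outside machinery; your direct computations are longer but elementary and remain within the real-variable framework of the rest of the paper.
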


The lemma below gives a subsolution in $B_1\setminus B_{1/4}$ whose support is $B_1\subset\R^n$ and such that it is comparable to $(1-|x|)^s$ in $B_1$.

\begin{lem}[Subsolution]\label{prop:subsolution}
There exist $C_2>0$ and a radial function $\varphi_2=\varphi_2(|x|)$ satisfying
\begin{equation}\label{eq:propsubsol}
\begin{cases}
(-\Delta)^s\varphi_2 \le 0  &\mbox{in }B_1\setminus B_{1/4}\\
\varphi_2 = 1 &\mbox{in }B_{1/4}\\
\varphi_2(x)\ge C_2(1-|x|)^s & \mbox{in } B_1\\
\varphi_2 = 0 \quad &\mbox{in }\R^n\setminus B_1 \,.
\end{cases}
\end{equation}
\end{lem}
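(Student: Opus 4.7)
The objective is to exhibit a radial function $\varphi_2(|x|)$ that equals $1$ on $B_{1/4}$, vanishes outside $B_1$, is $s$-subharmonic on the annulus $A:=B_1\setminus\overline{B_{1/4}}$, and satisfies the pointwise lower bound $\varphi_2(x)\ge C_2(1-|x|)^s$ in $B_1$. My plan is to take $\varphi_2$ as the unique bounded $s$-harmonic function on $A$ with exterior datum $\chi_{\overline{B_{1/4}}}$; namely the weak solution of
\[
(-\Delta)^s\varphi_2=0\ \text{in}\ A,\qquad \varphi_2\equiv 1\ \text{in}\ \overline{B_{1/4}},\qquad \varphi_2\equiv 0\ \text{in}\ \R^n\setminus B_1.
\]
Existence and uniqueness come from the variational formulation on $H^s(\R^n)$ with prescribed exterior data; radial symmetry is forced by the symmetry of both $A$ and the data; and the maximum principle yields $0\le\varphi_2\le 1$. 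Continuity up to the boundary of $A$ is a consequence of the boundary regularity already contained in Proposition~\ref{prop:u-is-Cs}. With this choice, the subsolution inequality $(-\Delta)^s\varphi_2\le 0$ holds with equality on $A$, and the prescribed values of $\varphi_2$ on $B_{1/4}$ and outside $B_1$ hold by construction.

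The whole substance of the proof is then the lower bound $\varphi_2(x)\ge C_2(1-|x|)^s$. Away from $\partial B_1$, the strong maximum principle gives a positive lower bound $\varphi_2\ge c_0>0$ on $\overline{B_{3/4}}$, so the bound is immediate there (with $C_2=c_0$, since $(1-|x|)^s\le 1$). The delicate range is the shell $A':=\{3/4<|x|<1\}$, where both $\varphi_2$ and $(1-|x|)^s$ vanish as $|x|\to 1^-$. I would establish the bound on $A'$ by a Hopf-type barrier argument: construct an explicit radial function $\underline v$ supported in $\overline{B_1}$ with $(-\Delta)^s\underline v\le 0$ on $A'$, $\underline v\le c_0$ on $\overline{B_{3/4}}$, $\underline v\equiv 0$ on $\R^n\setminus B_1$, and $\underline v(x)\ge c_1(1-|x|)^s$ on $A'$, and then apply the comparison principle to the difference $\varphi_2-\underline v$. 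Indeed this difference satisfies $(-\Delta)^s(\varphi_2-\underline v)\ge 0$ on $A'$ and is non-negative on $\R^n\setminus A'$, so the nonlocal minimum principle yields $\varphi_2\ge\underline v$ on $A'$.

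The main obstacle is the construction of the barrier $\underline v$: the naive radial candidate $c(1-|x|^2)_+^s$ is, by Getoor's identity $(-\Delta)^s(1-|x|^2)_+^s=\kappa_{n,s}>0$ in $B_1$, a \emph{super}solution rather than a subsolution; and replacing the exponent $s$ by an $\alpha>s$ (which in one dimension, via the formula for $(-\Delta)^s t_+^\alpha$, does give subsolutions) weakens the boundary decay and so fails to produce the optimal $(1-|x|)^s$ bound. The construction therefore requires a more clever profile, which I would build by combining the one-dimensional $s$-harmonic function $\varphi_0(t)=t_+^s$ of Proposition~\ref{prop:solution}, rotated so that its vanishing set is a half-space tangent to $\partial B_1$, with a truncation that enforces vanishing outside $B_1$. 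Splitting the integral defining $(-\Delta)^s\underline v$ at points of $A'$ into contributions from $B_{1/4}$, $A'$, and $\R^n\setminus B_1$, and comparing each with the corresponding piece of the untruncated half-space profile (whose fractional Laplacian is zero by Proposition~\ref{prop:solution}), should show that the curvature and truncation corrections have the right sign, at the cost of absorbing a constant into $c_1$. Carrying out this one-point-at-a-time computation carefully is the only nontrivial step of the proof.
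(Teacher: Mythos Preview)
Your choice of $\varphi_2$ as the $s$-harmonic function in the annulus with datum $\chi_{\overline{B_{1/4}}}$ is exactly what the paper does, and the reduction of the whole lemma to the lower bound $\varphi_2\ge C_2(1-|x|)^s$ is correct. The gap is in your barrier $\underline v$. If you take the half-space profile $w(x)=(1-x\cdot e)_+^s$ tangent to $\partial B_1$ at a point and truncate it to $B_1$, then for $x\in A'\subset B_1$ one computes
\[
(-\Delta)^s(w\chi_{B_1})(x)=(-\Delta)^s w(x)+c_{n,s}\int_{\R^n\setminus B_1}\frac{w(y)}{|x-y|^{n+2s}}\,dy
=0+\text{(strictly positive)},
\]
so the truncation correction has the \emph{wrong} sign: removing positive mass outside $B_1$ raises, not lowers, the fractional Laplacian. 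Your sentence ``the curvature and truncation corrections have the right sign'' is therefore not justified, and in fact false for this construction; the proposed $\underline v$ is a supersolution on $A'$, not a subsolution.

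The paper's barrier is far simpler and exploits nonlocality directly. One takes
\[
\psi(x)=(1-|x|^2)_+^s+C\,\chi_{\overline{B_{1/4}}}(x).
\]
On $B_1\setminus\overline{B_{1/4}}$ the first term has constant (positive) fractional Laplacian by Getoor's formula, while for the second term, since $x\notin\overline{B_{1/4}}$,
\[
(-\Delta)^s\bigl(C\chi_{\overline{B_{1/4}}}\bigr)(x)=-c_{n,s}\,C\int_{B_{1/4}}\frac{dy}{|x-y|^{n+2s}}\le -cC,
\]
which is a large negative constant. Choosing $C$ big makes $\psi$ a subsolution on the annulus that already has the right boundary decay $(1-|x|^2)^s\asymp(1-|x|)^s$. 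Comparison with $\varphi_2$ (after dividing $\psi$ by $1+C$ so that $\psi\le\varphi_2$ on $\overline{B_{1/4}}$) then gives the bound in one step. This is the missing idea: rather than engineering the boundary profile, just overload mass in $B_{1/4}$ and let the nonlocal tail do the work.
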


To prove H\"older regularity of $u/\delta^s|_\Omega$ up to the boundary, we will control the oscillation of this function in sets near $\partial\Omega$ whose diameter goes to zero.
To do it, we will set up an iterative argument as it is done for second order equations.

Let us define the sets in which we want to control the oscillation and also auxiliary sets that are involved in the iteration.

\begin{defi}\label{defiDR}
Let $\kappa>0$ be a fixed small constant and let $\kappa'= 1/2+2\kappa$. We may take, for instance $\kappa= 1/16$, $\kappa' = 5/8$. Given a point $x_0$ in $\partial \Omega$ and $R>0$ let us define
\[ D_R = D_R(x_0) = B_R(x_0)\cap \Omega \]
and
\[D_{\kappa'R}^+ = D_{\kappa'R}^+(x_0)= B_{\kappa'R}(x_0)\cap\{x\in \Omega\,:\,-x\cdot \nu(x_0)\ge 2\kappa R\}\,,\]
where $\nu(x_0)$ is the unit outward normal at $x_0$; see Figure \ref{figura}. By $C^{1,1}$ regularity of the domain, there exists  $\rho_0>0$, depending on $\Omega$, such that the following inclusions hold for each $x_0\in \partial\Omega$ and $R\le \rho_0$:
\begin{equation}\label{eq:BkR(DR+)subsetDR}
B_{\kappa R}(y) \subset  D_R(x_0) \quad \mbox{for all }y\in D_{\kappa'R}^+(x_0)\,,
\end{equation}
and
\begin{equation}\label{eq:B4kR(y*+nu)subset}
B_{4\kappa R}(y^*-4\kappa R\nu(y^*))\subset D_R(x_0) \quad \mbox{and}\quad   B_{\kappa R}(y^*-4\kappa R\nu(y^*))\subset D_{\kappa'R}^+(x_0)\,
\end{equation}
for all $y \in D_{R/2}$, where $y^*\in\partial\Omega$ is the unique boundary point satisfying $|y-y^*|=\text{dist}(y,\partial\Omega)$.
Note that, since $R\leq\rho_0$, $y\in D_{R/2}$ is close enough to $\partial \Omega$ and hence the point $y^*-4\kappa R\nu(y^*)$ lays on the line joining $y$ and $y^*$; see Remark \ref{remrho0} below.
\end{defi}

\begin{figure}[htp]
\begin{center}
\includegraphics[]{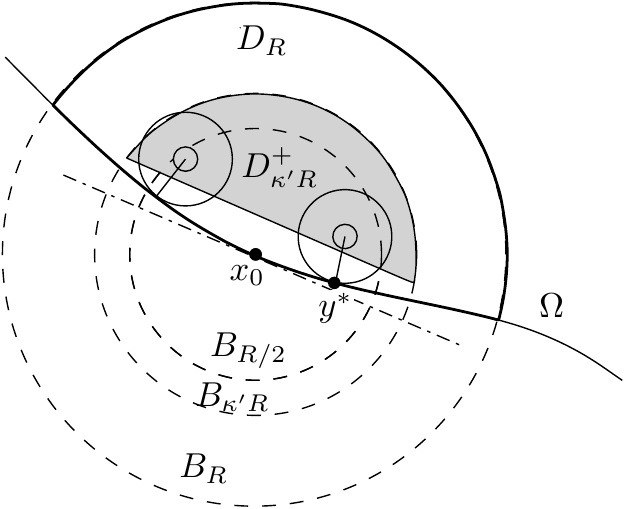}
\end{center}
\caption{\label{figura} The sets $D_R$ and $D_{\kappa'R}^+$}
\end{figure}

\begin{rem}\label{remrho0}
Throughout the paper, $\rho_0>0$ is a small constant depending only on $\Omega$, which we assume to be a bounded $C^{1,1}$ domain.
Namely, we assume that \eqref{eq:BkR(DR+)subsetDR} and \eqref{eq:B4kR(y*+nu)subset} hold whenever $R\le\rho_0$, for each $x_0\in\partial\Omega$, and also that every point on $\partial\Omega$ can be touched from both inside and outside $\Omega$ by balls of radius $\rho_0$.
In other words, given $x_0\in \partial\Omega$, there are balls of radius $\rho_0$, $B_{\rho_0}(x_1)\subset \Omega$ and $B_{\rho_0}(x_2)\subset\R^n\setminus \Omega$, such that $\overline{B_{\rho_0}(x_1)}\cap\overline{B_{\rho_0}(x_2)}=\{x_0\}$.
A useful observation is that all points $y$ in the segment that joins $x_1$ and $x_2$ ---through $x_0$--- satisfy
$\delta(y)= |y-x_0|$. Recall that $\delta={\rm dist}(\,\cdot\,,\partial\Omega)$.
\end{rem}

In the rest of this section, by $|(-\Delta)^su|\le K$ we mean that either $(-\Delta)^s u = g$ in the weak sense for some $g\in L^\infty$ satisfying $\|g\|_{L^\infty}\le K$ or that $u$ satisfies $-K \le (-\Delta)^s u \le K$ in the viscosity sense.

The first (and main) step towards Theorem \ref{thm:v-is-Calpha} is the following.

\begin{prop}\label{lem_main}
Let $\Omega$ be a bounded $C^{1,1}$ domain, and $u$ be such that $|(-\Delta)^s u|\le K$ in $\Omega$ and $u\equiv0$ in $\R^n \setminus \Omega$, for some constant $K$.
Given any $x_0\in \partial\Omega$, let $D_R$ be as in Definition \ref{defiDR}.

Then, there exist $\alpha\in(0,1)$ and $C$ depending only on $\Omega$ and $s$ ---but not on $x_0$--- such that
\begin{equation}\label{eq:lemmain}\sup_{D_R} u/\delta^s - \inf_{D_R} u/\delta^s \le C K R^\alpha\end{equation}
for all $R\leq\rho_0$, where $\rho_0>0$ is a constant depending only on $\Omega$.
\end{prop}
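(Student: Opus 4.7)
The plan is to mimic the classical Krylov argument (see \cite{Kazdan}), iterating a dichotomy at dyadic boundary scales to show that $u/\delta^s$ has $\alpha$-H\"older oscillation decay near $x_0 \in \partial\Omega$. Fix a point $x_0 \in \partial\Omega$, set $R_k = \rho_0\eta^k$ for some $\eta \in (0,1)$ to be fixed later, and let $\delta_0(x) = \mathrm{dist}(x, \R^n\setminus\Omega)$. I will construct inductively a nondecreasing sequence $m_k$ and a nonincreasing sequence $M_k$ satisfying
\[ M_k - m_k \le L K R_k^\alpha \qquad \text{and} \qquad m_k\,\delta_0^s(x) \le u(x) \le M_k\,\delta_0^s(x) \ \text{ in } D_{R_k}(x_0), \]
for a fixed large constant $L$ and a small $\alpha>0$ depending only on $\Omega$ and $s$. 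The base case $k=0$ follows from Lemma \ref{lem-aux-Cs} applied to $\pm u$: choose $m_0 = -CK$, $M_0 = CK$ with $L$ so large that $2C \le L\rho_0^\alpha$.

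For the inductive step, assume the trap holds at level $k$. Since $\delta_0(x) \ge 2\kappa R_k$ on $D^+_{\kappa' R_k}$, one has on this set
\[ (u - m_k\delta_0^s) + (M_k\delta_0^s - u) \;=\; (M_k - m_k)\,\delta_0^s \;\ge\; c_0(M_k-m_k) R_k^s, \]
so up to replacing $u$ by $-u$ and relabeling the constants, we may assume there exists $\bar y \in D^+_{\kappa' R_k}$ (e.g.\ the center of an inscribed ball of radius $\sim R_k$ from Definition \ref{defiDR}) with
\[ w(\bar y) \;:=\; u(\bar y) - m_k\,\delta_0^s(\bar y) \;\ge\; \tfrac12 c_0(M_k-m_k) R_k^s. \]
The function $w$ satisfies $w \ge 0$ in $D_{R_k}$ and $w \equiv 0$ in $\R^n \setminus \Omega$, and since $\Omega$ is $C^{1,1}$ one has $\|(-\Delta)^s \delta_0^s\|_{L^\infty(\Omega)} \le C$, so $|(-\Delta)^s w| \le K + C|m_k| \le C' K$ in $\Omega$ (bounding $|m_k| \le CK$ from the base step).

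The hard part is then to push $w$ down to a lower bound of the form $w \ge \eta_0(M_k-m_k)\delta_0^s$ in $D_{R_{k+1}}$, despite $w$ being only \emph{locally} nonnegative in $D_{R_k}$. For this I will combine two inputs. First, the inductive hypothesis at previous scales $j < k$ gives
\[ w(x) \;=\; u(x) - m_k\,\delta_0^s(x) \;\ge\; (m_j - m_k)\,\delta_0^s(x) \;\ge\; -L K R_j^{s+\alpha} \qquad \text{in } D_{R_j}\setminus D_{R_{j+1}}, \]
and $w \ge -CK$ outside $D_{R_0}$; the contribution of this controlled negative tail to $(-\Delta)^s$ at points near $x_0$ is summable and bounded by $CK$. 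Second, for each boundary point $z \in \partial\Omega \cap B_{R_{k+1}/2}(x_0)$, place a rescaled copy of the subsolution $\varphi_2$ from Lemma \ref{prop:subsolution}, supported in an interior tangent ball $B_{cR_k}$ touching $\partial\Omega$ at $z$; using the bound $\varphi_2 \ge C_2(1-|\cdot|)^s$ and the lower bound $w(\bar y) \ge c(M_k-m_k) R_k^s$ at the center, the standard comparison principle (after subtracting $C K R_k^{2s}$ worth of correction to absorb $\|(-\Delta)^s w\|_\infty$ and the negative tail via the supersolution $\varphi_1$ of Lemma \ref{prop:supersolution}) yields
\[ w(x) \;\ge\; \eta_0(M_k-m_k)\,\delta_0^s(x) \qquad \text{for } x \in D_{R_{k+1}}, \]
with $\eta_0 \in (0, \tfrac12)$ depending only on $\Omega$ and $s$.

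Setting $m_{k+1} = m_k + \eta_0(M_k-m_k)$ and $M_{k+1} = M_k$ closes the induction: we get $M_{k+1}-m_{k+1} = (1-\eta_0)(M_k-m_k)$, and choosing $\eta \in (0,1)$ together with $\alpha \in (0,1)$ so that $1 - \eta_0 = \eta^\alpha$ gives $M_{k+1} - m_{k+1} \le LK R_{k+1}^\alpha$, as required. The oscillation bound \eqref{eq:lemmain} at a general $R \le \rho_0$ follows by picking the $k$ with $R_{k+1} < R \le R_k$. The main obstacle is the middle step: the nonlocality of $(-\Delta)^s$ means that the dichotomy must be accompanied by a careful estimate of the negative tail of $w$ in $\R^n \setminus D_{R_k}$, and the comparison with $\varphi_2$ in a full interior neighborhood must be done through a composite barrier (truncating via $\varphi_1$) to retain both the boundary behavior $\sim \delta^s$ and the correct sign of $(-\Delta)^s$.
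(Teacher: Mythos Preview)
Your outline follows the same Krylov-type iteration as the paper, with the same ingredients: the barriers $\varphi_1$ and $\varphi_2$, the bound on $(-\Delta)^s\delta_0^s$ (Lemma~\ref{lapsdeltas}), and a tail estimate for the negative part of $w = u - m_k\delta_0^s$ outside $D_{R_k}$. There are, however, two gaps.

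First, your dichotomy only gives $w(\bar y) \ge \tfrac12 c_0(M_k-m_k)R_k^s$ at a \emph{single} point $\bar y \in D^+_{\kappa' R_k}$. To slide the rescaled subsolution $\varphi_2$ underneath $w$ you need $w \ge c(M_k-m_k)R_k^s$ on the entire inner ball where $\varphi_2 \equiv 1$ (a ball of radius $\sim R_k$), not just at one point. Passing from a single point to a ball requires the interior Harnack inequality applied to (a truncation of) $w$, which you never invoke. The paper isolates this step as Lemma~\ref{lemB}. In fact the paper sidesteps the dichotomy altogether: it applies Lemmas~\ref{lemA} and~\ref{lemB} to \emph{both} $u_k^+ = (C_0^{-1}u - m_k\delta_0^s)^+$ and its ``upside-down'' counterpart $(M_k\delta_0^s - C_0^{-1}u)^+$, and then \emph{adds} the two resulting inequalities, obtaining the oscillation decay directly without any case split.

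Second, the induction does not close with the error bounds as you state them. The tail of $w^-$ contributes to $(-\Delta)^s w^+$ an amount of order $R_k^{\alpha-s}$ times a constant that the paper shows equals $\varepsilon_0(\alpha)$ with $\varepsilon_0(\alpha)\to 0$ as $\alpha\downarrow 0$ (coming from the factor $|8z|^\alpha - 1$ in the tail integral). Likewise the bound $|(-\Delta)^s u|\le K$ together with $|(-\Delta)^s\delta_0^s|\le C_\Omega$ produces, after scaling, a term $C\rho_1^{s}\,R_k^\alpha$ in the oscillation inequality. One must first choose $\alpha$ small (to make $\varepsilon_0$ small) and then the initial scale $\rho_1$ small (to make $C\rho_1^s$ small) so that these errors are strictly dominated by the Harnack gain; only then does the geometric decay survive. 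Your sketch treats both errors as ``bounded by $CK$'' and absorbs them as an $O(KR_k^{2s})$ correction, which is not sharp enough to close the loop. (A minor related point: $(-\Delta)^s\delta_0^s$ is bounded only in $\Omega_{\rho_0}$, not in all of $\Omega$, since $\delta$ is merely Lipschitz away from $\partial\Omega$.)
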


To prove Proposition \ref{lem_main} we need three preliminary lemmas.
We start with the first one, which might be seen as the fractional version of Lemma 4.31 in \cite{Kazdan}. Recall that $\kappa'\in(1/2,1)$ is a fixed constant throughout the section.
It may be useful to regard the following lemma as a bound by below for $\inf_{D_{R/2}} u/\delta^s$, rather than an upper bound for $\inf_{D_{\kappa'R}^+} u/\delta^s$.

\begin{lem}\label{lemA}
Let $\Omega$ be a bounded $C^{1,1}$ domain, and $u$ be such that $u\ge0$ in all of $\R^n$ and $|(-\Delta)^s u|\le K$ in $D_R$, for some constant $K$.
Then, there exists a positive constant $C$, depending only on $\Omega$ and $s$, such that
\begin{equation}\label{eq:lemA}
\inf_{D_{\kappa'R}^+} u/\delta^s \le C \bigl(\,\inf_{D_{R/2}} u/\delta^s + K R^s\bigr)
\end{equation}
for all $R\le \rho_0$, where $\rho_0>0$ is a constant depending only on $\Omega$.
\end{lem}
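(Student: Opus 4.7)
After the standard scaling $\tilde u(x)=u(x_0+Rx)/R^s$ (which leaves the ratio $u/\delta^s$ invariant and turns $|(-\Delta)^s u|\le K$ into $|(-\Delta)^s\tilde u|\le KR^s$), I may reduce to $R=1$, $x_0=0$, and prove the inequality with $KR^s$ replaced by $K$. Set $m:=\inf_{D_{1/2}}u/\delta^s$; the goal is to exhibit $\tilde z\in D_{\kappa'}^+$ with $u(\tilde z)/\delta(\tilde z)^s\le C(m+K)$. Fix $\varepsilon>0$ and choose $y_0\in D_{1/2}$ with $u(y_0)\le(m+\varepsilon)\delta(y_0)^s$. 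Let $y_0^*\in\partial\Omega$ be the unique closest boundary point (unique by $C^{1,1}$ regularity since $\delta(y_0)\le 1/2\le\rho_0$), so $y_0=y_0^*-\delta(y_0)\nu(y_0^*)$, and set $z_0:=y_0^*-4\kappa\,\nu(y_0^*)$. Property \eqref{eq:B4kR(y*+nu)subset} gives $B_{4\kappa}(z_0)\subset D_1$ and $B_\kappa(z_0)\subset D_{\kappa'}^+$ (so in particular $\delta\ge 3\kappa$ on $B_\kappa(z_0)$), while the collinearity of $y_0^*,y_0,z_0$ along the inward normal yields $|y_0-z_0|=|\delta(y_0)-4\kappa|$. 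If $\delta(y_0)\in[3\kappa,5\kappa]$ then $y_0\in B_\kappa(z_0)\subset D_{\kappa'}^+$ and the conclusion is immediate; if $\delta(y_0)\ge 8\kappa$ an interior Harnack chain of uniformly bounded length connects $y_0$ to $z_0$ and gives $u(z_0)\le C(u(y_0)+K)\le C(m+K)$. The remaining, and essential, case is $y_0\in A:=B_{4\kappa}(z_0)\setminus B_\kappa(z_0)$.

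For this core case I perform a three-way comparison on $A$. Let $c:=\inf_{B_\kappa(z_0)}u\ge 0$ (finite by the interior H\"older regularity available for nonnegative functions with bounded $(-\Delta)^s u$) and introduce the rescaled subsolution $\psi(x):=c\,\varphi_2\bigl((x-z_0)/(4\kappa)\bigr)$, with $\varphi_2$ from Lemma \ref{prop:subsolution}, together with the particular solution $\chi$ of $(-\Delta)^s\chi=K$ in $B_{4\kappa}(z_0)$ vanishing outside, given explicitly by \eqref{explicitsolution2}. The scaling properties of $\varphi_2$ yield $(-\Delta)^s\psi\le 0$ on $A$, $\psi\equiv c$ on $B_\kappa(z_0)$, $\psi\equiv 0$ outside $B_{4\kappa}(z_0)$, and the lower bound $\psi(x)\ge c\,C_2(4\kappa)^{-s}(4\kappa-|x-z_0|)^s$ on $B_{4\kappa}(z_0)$. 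The difference $w:=\psi-u-\chi$ then satisfies $(-\Delta)^s w\le 0$ on $A$ (since $(-\Delta)^s u\ge-K$ and $(-\Delta)^s\chi=K$) and $w\le 0$ on $\R^n\setminus A$ (using $u\ge c$ on $B_\kappa(z_0)$, $u\ge 0$ elsewhere, and the vanishing of $\psi,\chi$ outside $B_{4\kappa}(z_0)$), and the fractional maximum principle gives $u\ge\psi-\chi$ on $A$. Evaluating at $y_0\in A$, the identity $4\kappa-|y_0-z_0|=\delta(y_0)$ together with $\chi(y_0)\le CK\kappa^s\delta(y_0)^s$ (from the explicit form of $\chi$) and $u(y_0)\le(m+\varepsilon)\delta(y_0)^s$, after dividing by $\delta(y_0)^s$, produces $c\le C(m+\varepsilon+K)$ for a constant depending only on $\kappa$ and $s$. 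Since $\delta\ge 3\kappa$ on $B_\kappa(z_0)\subset D_{\kappa'}^+$, this finally yields $\inf_{D_{\kappa'}^+}u/\delta^s\le c/(3\kappa)^s\le C(m+K)$ as $\varepsilon\to 0$, and undoing the rescaling gives \eqref{eq:lemA}.

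The main technical obstacle is arranging the three-way comparison so that both the operator bound $(-\Delta)^s w\le 0$ on $A$ and the sign condition $w\le 0$ on $\R^n\setminus A$ hold simultaneously: the specific choice of the particular solution $\chi$ from \eqref{explicitsolution2} is exactly what cancels the $K$-source against $(-\Delta)^s u\ge-K$, while the precise boundary behaviour $\psi(x)\gtrsim\delta(x)^s$ near $y_0^*$ furnished by the lower bound on $\varphi_2$ in Lemma \ref{prop:subsolution} is what allows the final division by $\delta(y_0)^s$ to produce a constant independent of how small $\delta(y_0)$ may be.
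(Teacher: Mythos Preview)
Your argument uses the same key ingredient as the paper---the rescaled subsolution $\varphi_2$ from Lemma~\ref{prop:subsolution}, centered at the interior point $z_0=y_0^*-4\kappa\,\nu(y_0^*)$---and the basic comparison is correct. There are two organizational differences worth noting. First, the paper argues in the opposite direction: it sets $m=\inf_{D_{\kappa'R}^+}u/\delta^s$ and shows $u/\delta^s\ge c\,m$ everywhere on $D_{R/2}$, thereby avoiding the choice of a near-minimizer and the parameter~$\varepsilon$. Second, the paper treats $K>0$ separately by harmonic replacement on $D_R$ together with the supersolution $\varphi_1$ from Lemma~\ref{prop:supersolution}, rather than by subtracting the explicit ball solution $\chi$ as you do; both treatments of the inhomogeneous term are legitimate.

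There is, however, a gap in your case analysis. In the ``core case'' you invoke the identity $4\kappa-|y_0-z_0|=\delta(y_0)$, but since $|y_0-z_0|=|4\kappa-\delta(y_0)|$ this holds only when $\delta(y_0)\le 4\kappa$. For $\delta(y_0)\in(5\kappa,8\kappa)$ one has instead $4\kappa-|y_0-z_0|=8\kappa-\delta(y_0)$, which can be arbitrarily small (note that $\delta(y_0)\le|y_0|<1/2=8\kappa$, so this range is nonempty and your case ``$\delta(y_0)\ge 8\kappa$'' is in fact vacuous); then $\psi(y_0)$ degenerates and the division by $\delta(y_0)^s$ no longer bounds~$c$. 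The paper avoids this by using the cleaner dichotomy ``either $y\in D_{\kappa'R}^+$ or $\delta(y)<4\kappa R$'' (a consequence of the $C^{1,1}$ geometry for $R\le\rho_0$), so that the barrier argument is applied only when $\delta(y)<4\kappa R$, where your identity is valid. In your framework the fix is simply to extend your Harnack-chain argument to cover the whole range $\delta(y_0)\ge 3\kappa$: there both $y_0$ and $z_0$ lie at uniformly positive distance from $\partial\Omega$ inside $D_1$, so a chain of bounded length connects them and gives $u(z_0)\le C\bigl(u(y_0)+K\bigr)$, leaving the barrier comparison only for $\delta(y_0)<3\kappa$.
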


\begin{proof}
{\em Step 1.} We do first the case $K=0$. Let $R\leq \rho_0$, and let us call $m = \inf_{D_{\kappa'R}^+} u/\delta^s \ge 0$. We have $u\ge m \delta^s \ge m (\kappa R)^s$ on $D_{\kappa'R}^+$. The second inequality is a consequence of \eqref{eq:BkR(DR+)subsetDR}.

We scale the subsolution $\varphi_2$ in Lemma \ref{prop:subsolution} as follows, to use it as lower barrier:
\[\psi_R(x):= (\kappa R)^s \varphi_2\bigl(\textstyle \frac{x}{4\kappa R}\bigr)\,.\]
By \eqref{eq:propsubsol} we have
\[
\begin{cases}
(-\Delta)^s\psi_R \leq 0  &\mbox{in }B_{4\kappa R}\setminus B_{\kappa R}\\
\psi_R = (\kappa R)^s &\mbox{in }B_{\kappa R}\\
\psi_R \ge 4^{-s} C_2(4\kappa R-|x|)^s & \mbox{in } B_{4\kappa R}\setminus B_{\kappa R}\\
\psi_R \equiv 0 \quad &\mbox{in }\R^n\setminus B_{4\kappa R} \,.
\end{cases}
\]

Given $y \in D_{R/2}$, we have either $y\in D_{\kappa'R}^+$ or $\delta(y)<4\kappa R$, by \eqref{eq:B4kR(y*+nu)subset}.
If $y\in D_{\kappa'R}^+$ it follows from the definition of $m$ that $m \le u(y)/\delta(y)^s$.
If $\delta(y) < 4\kappa R$, let $y^*$ be the closest point to $y$ on $\partial \Omega$ and $\tilde y = y^* +4\kappa \nu(y^*)$. Again by \eqref{eq:B4kR(y*+nu)subset}, we have $B_{4\kappa R}(\tilde y)\subset D_R$ and $B_{\kappa R}(\tilde y)\subset D_{\kappa'R}^+$. But recall that $u\ge m (\kappa R)^s$ in $D_{\kappa'R}^+$, $(-\Delta)^s u= 0$ in $\Omega$, and $u\ge 0$ in $\R^n$.
Hence, $u(x)\ge m \psi_R(x-\tilde y)$ in all $\R^n$ and in particular
$u/\delta^s \ge 4^{-s}C_2 m$ on the segment joining $y^*$ and $\tilde y$, that contains $y$. Therefore,
\begin{equation}\label{eq:pflemA1}
\inf_{D_{\kappa'R}^+} u/\delta^s \le C \,\inf_{D_{R/2}} u/\delta^s\,.
\end{equation}

{\em Step 2.} If $K>0$ we consider $\tilde u$ to be the solution of
\[
\begin{cases}
(-\Delta)^s \tilde u = 0 \quad&\mbox{ in }D_R\\
\tilde u= u & \mbox{in }\R^n\setminus D_R.
\end{cases}
\]
By Step 1, \eqref{eq:pflemA1} holds with $u$ replaced by $\tilde u$.

On the other hand, $w=\tilde u - u$ satisfies $|(-\Delta)^s w|\le K$ and $w\equiv 0$ outside $D_R$. Recall that points of $\partial \Omega$ can be touched by exterior balls of radius less than $\rho_0$. Hence, using the rescaled supersolution $K R^{2s}\varphi_1(x/R)$ from Lemma \ref{prop:supersolution} as upper barrier and we readily prove, as in the proof
of Lemma \ref{lem-aux-Cs}, that
\[|w| \le C_1 K R^s\delta^s \quad \mbox{in }D_R\,.\]
Thus, \eqref{eq:lemA} follows.
\end{proof}

The second lemma towards Proposition \ref{lem_main}, which might be seen as the fractional version of Lemma 4.35 in \cite{Kazdan}, is the following.

\begin{lem}\label{lemB}
Let $\Omega$ be a bounded $C^{1,1}$ domain, and $u$ be such that $u\ge0$ in all of $\R^n$ and $|(-\Delta)^s u|\le K$ in $D_R$, for some constant $K$.
Then, there exists a positive constant $C$, depending on $\Omega$ and $s$, such that
\begin{equation}\label{eq:lemB}
\sup_{D_{\kappa'R}^+} u/\delta^s \le C \bigl(\,\inf_{D_{\kappa'R}^+} u/\delta^s + K R^s\bigr)
\end{equation}
for all $R\le\rho_0$, where $\rho_0>0$ is a constant depending only on $\Omega$.
\end{lem}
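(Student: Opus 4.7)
The plan is to reduce Lemma \ref{lemB} to a standard interior Harnack inequality for $(-\Delta)^s$ applied on balls contained in $D_R$, and then iterate along a chain. The key preliminary observation is that on $D^+_{\kappa' R}$ the distance function $\delta$ is comparable to $R$. Indeed, by inclusion \eqref{eq:BkR(DR+)subsetDR} every $y\in D^+_{\kappa'R}$ satisfies $B_{\kappa R}(y)\subset D_R\subset\Omega$, so $\delta(y)\ge \kappa R$, while trivially $\delta(y)\le |y-x_0|\le \kappa' R$. Therefore $\delta(y)^s$ is trapped between universal multiples of $R^s$ on $D^+_{\kappa'R}$, and the claim \eqref{eq:lemB} is equivalent (up to constants depending only on $s$ and $\kappa$) to the pointwise Harnack-type bound
\[
\sup_{D^+_{\kappa'R}} u \;\le\; C\bigl(\,\inf_{D^+_{\kappa'R}} u \,+\, K R^{2s}\bigr).
\]

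To prove this, I will apply the standard interior Harnack inequality for the fractional Laplacian (valid because $u\ge 0$ in all of $\R^n$), which says that if $u\ge 0$ in $\R^n$ and $|(-\Delta)^s u|\le K$ in a ball $B_r(z)$, then $\sup_{B_{r/2}(z)} u \le C\bigl(\inf_{B_{r/2}(z)} u + r^{2s} K\bigr)$, with $C$ depending only on $n$ and $s$. For every $y\in D^+_{\kappa'R}$ the ball $B_{\kappa R}(y)$ lies inside $D_R$, so this interior Harnack applies on $B_{\kappa R/2}(y)$ with $r=\kappa R$.

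To pass from such local estimates to a bound on all of $D^+_{\kappa'R}$, I will use a chaining argument. For $R\le\rho_0$ the $C^{1,1}$ regularity of $\Omega$ guarantees that $D^+_{\kappa'R}$ is connected and has diameter at most $2\kappa' R$, while every point of $D^+_{\kappa'R}$ has $\delta\ge \kappa R$. Hence any two points $y,z\in D^+_{\kappa'R}$ can be joined by a path inside $D^+_{\kappa'R}$ (one may, after flattening $\partial\Omega$ near $x_0$, take a piecewise linear path staying in the slab $\{-x\cdot\nu(x_0)\ge 2\kappa R\}$), and this path can be covered by a chain of at most $N=N(\kappa)$ overlapping balls of radius $\kappa R/4$, each centered on the path and thus contained in $D_R$ together with its double. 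Applying the local Harnack estimate along the chain and iterating yields
\[
u(y)\;\le\; C^{N}u(z) \,+\, C' R^{2s} K,
\]
with $C,C',N$ depending only on $\Omega$ and $s$. Taking the supremum over $y$ and infimum over $z$ proves the reduced Harnack bound, and combining with $\delta^s\sim R^s$ gives \eqref{eq:lemB}.

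The main obstacle is purely bookkeeping: checking that the chain length $N$ is universal (independent of $x_0$, $R$) and that all relevant balls lie inside $D_R$ so that the local Harnack hypotheses are met. Both facts are consequences of the uniform $C^{1,1}$ structure of $\partial\Omega$ together with the inclusions of Definition \ref{defiDR}, so the constant $C$ in \eqref{eq:lemB} ends up depending only on $\Omega$ and $s$ as required.
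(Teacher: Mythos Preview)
Your proof is correct and follows essentially the same route as the paper: observe that $\delta\sim R$ on $D^+_{\kappa'R}$ via inclusion \eqref{eq:BkR(DR+)subsetDR}, apply the interior Harnack inequality on balls $B_{\kappa R}(y)\subset D_R$, and chain a bounded number of such balls (with the bound uniform by the scale-invariant $C^{1,1}$ geometry). The only cosmetic difference is that the paper first treats the case $K=0$ with the homogeneous Harnack inequality and then handles $K>0$ by subtracting the solution $\tilde u$ of $(-\Delta)^s\tilde u=0$ in $D_R$, $\tilde u=u$ in $\R^n\setminus D_R$, and bounding $|u-\tilde u|\le CKR^s\delta^s$ via the supersolution of Lemma \ref{prop:supersolution}; you instead invoke the inhomogeneous Harnack inequality (with the $r^{2s}K$ correction) directly, which is an equivalent and slightly more streamlined packaging.
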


\begin{proof}
{\em Step 1.} Consider first the case $K=0$. In this case \eqref{eq:lemB} follows from the Harnack inequality for the fractional Laplacian \cite{L} ---note that we assume $u\ge 0$ in all $\R^n$. Indeed, by \eqref{eq:BkR(DR+)subsetDR}, for each $y\in D_{\kappa'R}^+$ we have $B_{\kappa R}(y)\subset D_R$ and hence  $(-\Delta)^s u= 0$ in $B_{\kappa R}(y)$. Then we may cover $D_{\kappa'R}^+$ by a finite number of balls $B_{\kappa R/2}(y_i)$, using the same (scaled) covering for all $R\leq \rho_0$, to obtain
\[\sup_{B_{\kappa R/2}(y_i)} u\leq C\inf_{B_{\kappa R/2}(y_i)} u.\]
Then, \eqref{eq:lemB} follows since $(\kappa R/2)^s\leq \delta^s\leq (3\kappa R/2)^s$ in $B_{\kappa R/2}(y_i)$ by \eqref{eq:BkR(DR+)subsetDR}.

{\em Step 2.}  When $K>0$, we prove \eqref{eq:lemB} by using a similar argument as in Step 2 in the proof of Proposition \ref{lemA}.
\end{proof}

Before proving Lemma \ref{lapsdeltas} we give an extension lemma ---see \cite[Theorem 1, Section 3.1]{EG} where the case $\alpha=1$ is proven in full detail.

\begin{lem} \label{prop:extension-op-E}
Let $\alpha\in(0,1]$ and $V\subset\R^n$ a bounded domain. There exists a (nonlinear) map $E:C^{0,\alpha}(\overline V)\rightarrow C^{0,\alpha}(\R^n)$ satisfying
\[ E(w)\equiv w \quad \mbox{in }\overline V,\ \ \ [E(w)]_{C^{0,\alpha}(\R^n)}\le [w]_{C^{0,\alpha}(\overline V)},\ \ \ \mbox{and}\ \ \ \|E(w)\|_{L^\infty(\R^n)}\le \|w\|_{L^\infty(V)}\]
for all $w\in C^{0,\alpha}(\overline V)$.
\end{lem}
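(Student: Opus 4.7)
The plan is to use a McShane--Whitney type construction followed by a truncation. Set $M:=[w]_{C^{0,\alpha}(\overline V)}$ and define
\[ \tilde w(x)\;:=\;\inf_{y\in\overline V}\bigl\{w(y)+M|x-y|^\alpha\bigr\}.\]
Since $\overline V$ is compact and $w$ is bounded on it, $\tilde w$ is finite and well-defined on all of $\R^n$. For $x\in\overline V$, choosing $y=x$ in the infimum gives $\tilde w(x)\le w(x)$, while the Hölder bound $w(y)\ge w(x)-M|x-y|^\alpha$ for $y\in\overline V$ yields $\tilde w(x)\ge w(x)$; hence $\tilde w\equiv w$ on $\overline V$.

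For the Hölder seminorm, I would use the subadditivity $|a+b|^\alpha\le |a|^\alpha+|b|^\alpha$ for $\alpha\in(0,1]$: for every $y\in\overline V$ and $x_1,x_2\in\R^n$,
\[ w(y)+M|x_1-y|^\alpha\;\le\;w(y)+M|x_2-y|^\alpha+M|x_1-x_2|^\alpha.\]
Taking the infimum in $y$ gives $\tilde w(x_1)\le\tilde w(x_2)+M|x_1-x_2|^\alpha$, and interchanging $x_1$ and $x_2$ yields $[\tilde w]_{C^{0,\alpha}(\R^n)}\le M$.

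To obtain the $L^\infty$ bound without destroying the other two properties, set $L:=\|w\|_{L^\infty(V)}$ and define the truncation
\[ E(w)(x)\;:=\;\max\bigl\{-L,\,\min\{L,\,\tilde w(x)\}\bigr\}.\]
Since $|w(x)|\le L$ on $\overline V$, the truncation acts as the identity there and so $E(w)\equiv w$ on $\overline V$. The map $t\mapsto\max\{-L,\min\{L,t\}\}$ is $1$-Lipschitz, hence composing with it does not increase the Hölder seminorm, giving $[E(w)]_{C^{0,\alpha}(\R^n)}\le[\tilde w]_{C^{0,\alpha}(\R^n)}\le M$. Finally $\|E(w)\|_{L^\infty(\R^n)}\le L$ by construction.

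The construction is essentially routine; the only point that requires care is verifying that subadditivity of $t\mapsto t^\alpha$ really does yield the seminorm bound with constant exactly $1$ (no loss), which is where the restriction $\alpha\in(0,1]$ is used in an essential way. The truncation step is standard and presents no obstacle.
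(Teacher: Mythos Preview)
Your proof is correct and follows essentially the same approach as the paper: define the McShane--Whitney extension $\tilde w(x)=\inf_{y\in\overline V}\{w(y)+M|x-y|^\alpha\}$, use subadditivity of $t\mapsto t^\alpha$ to control the seminorm, and then truncate. The only minor difference is that the paper truncates only from above, $E(w)=\min\{\tilde w,\,\|w\|_{L^\infty}\}$, since the lower bound $\tilde w\ge -\|w\|_{L^\infty}$ is automatic from $w(y)\ge -\|w\|_{L^\infty}$; your two-sided truncation is harmless but redundant.
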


\begin{proof}
It is immediate to check that
\[E(w)(x)=\min\left\{\min_{z\in \overline V}\left\{w(z)+
[w]_{C^{\alpha}(\overline V)}|z-x|^\alpha\right\},\|w\|_{L^\infty(V)}\right\}\]
satisfies the conditions since, for all $x,y,z$ in $\R^n$,
\[|z-x|^\alpha \le |z-y|^\alpha+|y-x|^\alpha\,.\]
\end{proof}

We can now give the third lemma towards Proposition \ref{lem_main}. This lemma, which is related to Proposition \ref{prop:solution}, is crucial. It states that $\delta^s|_\Omega$, extended by zero outside $\Omega$, is an approximate solution in a neighborhood of $\partial\Omega$ inside $\Omega$.

\begin{lem}\label{lapsdeltas}
Let $\Omega$ be a bounded $C^{1,1}$ domain, and $\delta_0=\delta\chi_\Omega$ be the distance function in $\Omega$ extended by zero outside $\Omega$. Let $\alpha=\min\{s,1-s\}$, and $\rho_0$ be given by Remark \ref{remrho0}. Then,
\[(-\Delta)^s \delta_0^s\qquad \textrm{belongs to }\ C^{\alpha}(\overline{\Omega_{\rho_0}})\,,\]
where $\Omega_{\rho_0}=\Omega\cap\{\delta<\rho_0\}$.
In particular,
\[|(-\Delta)^s \delta_0^s| \le C_\Omega\quad \mbox{in } \Omega_{\rho_0}\,,\]
where $C_\Omega$ is a constant depending only on $\Omega$ and $s$.
\end{lem}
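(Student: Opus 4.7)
The plan is to compare $\delta_0^s$ with a half-space model at each point $x \in \Omega_{\rho_0}$. Let $x^* \in \partial\Omega$ be the unique nearest boundary point to $x$ (unique for $\rho_0$ small by the $C^{1,1}$ hypothesis), $\nu = \nu(x^*)$, and $d = \delta(x) = |x - x^*|$. Define the affine function $\ell_x(y) = -(y - x^*) \cdot \nu$, which is positive inside the tangent half-space $H_x$. By Proposition \ref{prop:solution} applied along $-\nu$, together with the fact that if $g\colon\R\to\R$ satisfies $(-\Delta)^s g = 0$ on $\R_+$ then $G(y) := g(y\cdot e)$ satisfies $(-\Delta)^s G = 0$ on $\{y\cdot e > 0\}\subset\R^n$ (obtained by integrating out the transverse directions in the singular kernel), the function $(\ell_x)_+^s$ is $s$-harmonic in $H_x$; hence $(-\Delta)^s(\ell_x)_+^s(x)=0$ since $x\in H_x$. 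Setting $\eta_x := \delta_0^s - (\ell_x)_+^s$, linearity gives
\[
(-\Delta)^s \delta_0^s(x) = (-\Delta)^s \eta_x(x) = -c_{n,s}\,\mathrm{PV}\!\int_{\R^n}\!\frac{\eta_x(y)}{|x-y|^{n+2s}}\,dy,
\]
where the key cancellation $\eta_x(x) = \delta(x)^s - \ell_x(x)^s = 0$ follows from $\ell_x(x)=\delta(x)=d$.

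The $C^{1,1}$ hypothesis yields $|\delta(y) - \ell_x(y)| \le C_\Omega |y - x^*|^2$ for $y$ within $\rho_0$ of $x^*$ on the common side of $\partial\Omega$ and the tangent plane, while the symmetric difference $\Omega\,\triangle\, H_x$ near $x^*$ lies in $\{|\ell_x|\le C_\Omega|y-x^*|^2\}$. Combined with the elementary inequalities $|a^s - b^s|\le s\max(a,b)^{s-1}|a-b|$ (for $a,b>0$) and $|a^s - b^s|\le |a-b|^s$ (in general), one obtains the pointwise bound
\[
|\eta_x(y)| \le C\min\bigl\{|y - x^*|^{1+s},\ |y - x^*|^{2s}\bigr\}\quad\text{for }|y-x^*|\le\rho_0,
\]
and globally $|\eta_x(y)|\le C(1+|y|)^s$. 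I would then estimate $(-\Delta)^s\eta_x(x)$ by splitting the integral into three regions: (i) $y\in B_{d/2}(x)$, handled via the symmetric second-difference form of $(-\Delta)^s$, using that the $d^{s-2}$ singular pieces of $\nabla^2\delta^s(x)$ and $\nabla^2\ell_x^s(x)$ cancel (since $\delta(x)=\ell_x(x)=d$ and $\nabla\delta(x)=\nabla\ell_x(x)=-\nu$), so that $\|\nabla^2\eta_x\|_{L^\infty(B_{d/2}(x))}\lesssim d^{s-1}$ and the contribution is of order $d^{1-s}$; (ii) $d/2\le|y-x^*|\le\rho_0$ with $y\notin B_{d/2}(x)$, where the pointwise bound integrates directly; (iii) $|y-x^*|>\rho_0$, where $|x-y|\gtrsim\rho_0$ dominates the $(1+|y|)^s$ growth. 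Summing yields the uniform $L^\infty$ bound.

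The main obstacle will be the $C^\alpha$ regularity with exponent $\alpha=\min\{s,1-s\}$. For $x_1,x_2\in\overline{\Omega_{\rho_0}}$ with $r=|x_1-x_2|$, I would distinguish the regime $r<\min\{\delta(x_1),\delta(x_2)\}/4$ from its complement. In the small-$r$ regime, I split
\[
(-\Delta)^s\delta_0^s(x_1) - (-\Delta)^s\delta_0^s(x_2) = \bigl[(-\Delta)^s\eta_{x_2}(x_1) - (-\Delta)^s\eta_{x_2}(x_2)\bigr] + (-\Delta)^s(\eta_{x_1}-\eta_{x_2})(x_1),
\]
where the first bracket is an interior Hölder increment of a function smooth near both $x_1$ and $x_2$, while the remaining term records the variation of the comparison function. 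The Lipschitz dependence $|x_1^*-x_2^*|+|\nu(x_1^*)-\nu(x_2^*)|\le Cr$ (from $C^{1,1}$) gives $|\ell_{x_1}(y)-\ell_{x_2}(y)|\le Cr(1+|y-x_1^*|)$, and propagating this through the three-region bound produces the desired increment. The sharp exponent $\alpha=\min\{s,1-s\}$ is forced by two competing Hölder losses: the $s$-Hölder behavior of $t\mapsto t^s$ near the tangent plane (contributing $r^s$), and the $(1-s)$-Hölder regularity of $\delta^s$-type functions extractable from interior estimates (contributing $r^{1-s}$). In the large-$r$ regime, both points lie in a boundary layer of thickness $O(r)$, and the required estimate follows from a finer version of the integral analysis above applied to each point separately, combined with the explicit decay of $\eta_{x_i}$ near $x_i^*$.
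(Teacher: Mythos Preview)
Your approach is genuinely different from the paper's. The paper flattens the boundary via a $C^{1,1}$ diffeomorphism $\Psi$, converting $\delta_0^s$ exactly into $(X_n)_+^s$ at the cost of a variable-coefficient kernel $|(X-Y)^TA(X)(X-Y)|^{-(n+2s)/2}$ times a Lipschitz Jacobian factor $g(X,Y)$; it then exploits the $s$-harmonicity of $(X_n)_+^s$ (Proposition~\ref{prop:solution}) to subtract $g(X,0)$ and obtain an integrand with the extra decay needed for convergence and H\"older continuity, the sharp exponent $\min\{s,1-s\}$ emerging from a single clean estimate on $\Theta(X,\bar X,Z)$. You instead keep the original coordinates and compare $\delta_0^s$ pointwise to the moving tangent half-space model $(\ell_x)_+^s$, using Proposition~\ref{prop:solution} to annihilate the model and reduce to the remainder $\eta_x$. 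Both routes hinge on the same key fact; yours is more geometric and avoids the change-of-variables machinery, while the paper's is more algebraic and packages the H\"older step uniformly.

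Two remarks on your sketch. First, the pointwise bound $|\eta_x(y)|\le C|y-x^*|^{1+s}$ fails on the symmetric difference $\Omega\,\triangle\,H_x$ (only the $|y-x^*|^{2s}$ bound survives there, since one of $\delta_0,\,(\ell_x)_+$ vanishes), so that thin region must be handled separately using its $O(|y-x^*|^2)$ width---this is easy but should be said. Second, in your small-$r$ regime the term $(-\Delta)^s(\eta_{x_1}-\eta_{x_2})(x_1)$ actually \emph{vanishes}: one has $\eta_{x_1}-\eta_{x_2}=(\ell_{x_2})_+^s-(\ell_{x_1})_+^s$, and since $r<\delta(x_2)/4$ forces $\ell_{x_2}(x_1)\ge\delta(x_2)-r>0$, both half-space functions are $s$-harmonic at $x_1$. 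So your decomposition collapses to a pure increment $(-\Delta)^s\eta_{x_2}(x_1)-(-\Delta)^s\eta_{x_2}(x_2)$. Closing this at the sharp exponent still requires a careful far-field estimate: in the region $|z|>d/8$ the crude global $C^s$ bound on $\eta_{x_2}$ gives only $r^sd^{-2s}$, which blows up; you need to split off the thin slab $\{y:\min(\delta_0(y),\ell_{x_2}(y)_+)\lesssim r\}$ (where the $C^s$ bound suffices because of the small volume) from the bulk (where $|\nabla\eta_{x_2}(y)|\le C|y-x_2^*|^s$ gives the gain). This is doable but is precisely the bookkeeping that the paper's flattening absorbs into the single estimate on $\Theta$.
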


\begin{proof}
Fix a point $x_0$ on $\partial\Omega$ and denote, for $\rho>0$, $B_\rho= B_{\rho}(x_0)$.
Instead of proving that
\[(-\Delta)^s\delta_0^s=c_{n,s}{\rm PV} \int_{\R^n}\frac{\delta_0(x)^s-\delta_0(y)^s}{|x-y|^{n+2s}}dy\]
is $C^{\alpha}(\overline{\Omega\cap B_{\rho_0}})$ ---as a function of $x$---, we may equivalently prove that
\begin{equation}\label{prove1}{\rm PV} \int_{B_{2\rho_0}}\frac{\delta_0(x)^s-\delta_0(y)^s}{|x-y|^{n+2s}}dy\qquad \mbox{belongs to}\qquad C^{\alpha}(\overline{\Omega\cap B_{\rho_0}}).
\end{equation}
This is because the difference
\[\frac{1}{c_{n,s}}(-\Delta)^s\delta_0^s-{\rm PV} \int_{B_{2\rho_0}}\frac{\delta_0(x)^s-\delta_0(y)^s}{|x-y|^{n+2s}}dy=\int_{\R^n \setminus B_{2\rho_0}}\frac{\delta_0(x)^s-\delta_0(y)^s}{|x-y|^{n+2s}}dy\,\]
belongs to $C^{s}(\overline{B_{\rho_0}})$, since $\delta_0^s$ is $C^{s}(\R^n)$ and $|x|^{-n-2s}$ is integrable and smooth outside a neighborhood of $0$.

To see \eqref{prove1}, we flatten the boundary.
Namely, consider a $C^{1,1}$ change of variables $X=\Psi(x)$, where $\Psi: B_{3\rho_0} \rightarrow V\subset \R^n$ is a $C^{1,1}$ diffeomorphism, satisfying that $\partial\Omega$ is mapped onto $\{X_n=0\}$, $\Omega\cap B_{3\rho_0}$ is mapped into $\R^n_+$, and $\delta_0(x)= (X_n)_+$.
Such diffeomorphism exists because we assume $\Omega$ to be $C^{1,1}$.
Let us respectively call $V_1$ and $V_2$ the images of $B_{\rho_0}$ and $B_{2\rho_0}$ under $\Psi$.
Let us denote the points of $V\times V$ by $(X,Y)$. We consider the functions $x$ and $y$, defined in $V$, by  $x= \Psi^{-1}(X)$ and $y= \Psi^{-1}(Y)$.
With these notations, we have
\[x-y=-D\Psi^{-1}(X)(X-Y)+\mathcal{O}\left(|X-Y|^2\right),\]
and therefore
\begin{equation}\label{x-y}|x-y|^2=(X-Y)^TA(X)(X-Y)+\mathcal{O}\left(|X-Y|^3\right),\end{equation}
where
\[A(X)=\left(D\Psi^{-1}(X)\right)^TD\Psi^{-1}(X)\]
is a symmetric matrix, uniformly positive definite in $\overline{V_2}$.
Hence,
\[{\rm PV} \int_{B_{2\rho_0}}\frac{\delta_0(x)^s-\delta_0(y)^s}{|x-y|^{n+2s}}dy=
{\rm PV} \int_{V_2}\frac{(X_n)_+^s-(Y_n)_+^s}{\left|(X-Y)^TA(X)(X-Y)\right|^{\frac{n+2s}{2}}}g(X,Y)dY,\]
where we have denoted
\[g(X,Y)=\left(\frac{(X-Y)^TA(X)(X-Y)}{|x-y|^2}\right)^{\frac{n+2s}{2}}J(Y)\]
and $J=|\det D\Psi^{-1}|$.
Note that we have $g\in C^{0,1}(\overline{V_2\times V_2})$, since $\Psi$ is $C^{1,1}$ and we have \eqref{x-y}.

Now we are reduced to proving that
\begin{equation}\label{prove2}
\psi_1(X):= {\rm PV}\int_{V_2}\frac{(X_n)_+^s-(Y_n)_+^s}{\left|(X-Y)^TA(X)(X-Y)\right|^{\frac{n+2s}{2}}}\,g(X,Y)dY,
\end{equation}
belongs to $C^{\alpha}(\overline{V_1^+})$ (as a function of $X$),
where $V_1^+=V_1\cap \{X_n>0\}$.

To prove this, we extend the Lipschitz function $g\in C^{0,1}(\overline{V_2\times V_2})$ to all $\R^n$.
Namely, consider the function $g^*=E(g)\in C^{0,1}(\R^n\times\R^n)$ provided by Proposition \ref{prop:extension-op-E}, which satisfies
\[ g^*\equiv g \mbox{ in }\overline{V_2\times V_2} \quad \mbox{and}\quad \| g^*\|_{C^{0,1}(\R^n\times\R^n)}\le\|g\|_{C^{0,1}(\overline{V_2\times V_2})}\,.\]

By the same argument as above, using that $V_1\subset\subset V_2$, we have that $\psi_1\in C^{\alpha}(\overline{V_1^+})$ if and only if so is the function
\[\psi(X)= {\rm PV}\int_{\R^n}\frac{(X_n)_+^s-(Y_n)_+^s}{\left|(X-Y)^TA(X)(X-Y)\right|^{\frac{n+2s}{2}}}\,g^*(X,Y)dY.\]

Furthermore, from $g^*$ define $\tilde g \in C^{0,1}(\overline{V_2}\times\R^n)$ by $\tilde g (X,Z) = g^*( X,X+MZ) \det M$, where $M=M(X)= D\Psi(X)$.
Then, using the change of variables $Y=X+MZ$ we deduce
\[\psi(X)= {\rm PV}\int_{\R^n}\frac{(X_n)_+^s-\bigl(e_n\cdot (X+ MZ)\bigr)_+^s}{|Z|^{n+2s}}\,\tilde g(X,Z)dZ.\]

Next, we prove that $\psi\in C^\alpha(\R^n)$, which concludes the proof.
Indeed, taking into account that the function $(X_n)_+^s$ is $s$-harmonic in $\R^n_+$ ---by Proposition \ref{prop:solution}---
we obtain
\[{\rm PV}\int_{\R^n}\frac{(e'\cdot X')_+^s-(e'\cdot(X'+Z))_+^s}{|Z|^{n+2s}}dZ=0 \]
for every $e'\in \R^n$ and for every $X'$ such that $e'\cdot X'>0$.
Thus, letting $e'= e_n^T M$ and $X'=M^{-1}X$ we deduce
\[{\rm PV}\int_{\R^n}\frac{(X_n)_+^s-\bigl(e_n\cdot (X+ MZ)\bigr)_+^s}{|Z|^{n+2s}}dZ=0\]
for every $X$ such that $(e_n^T M)\cdot(M^{-1}X)>0$, that is, for every $X\in\R^n_+$.

Therefore, it holds
\[\psi(X)=\int_{\R^n}\frac{\phi(X,0)-\phi(X,Z)}{|Z|^{n+2s}}\bigl(\tilde g(X,Z)- \tilde g(X,0)\bigr)dZ,\]
where
\[\phi(X,Z)=(e_n\cdot(X+MZ))_+^s\]
satisfies $[\phi]_{C^s(\overline{V_2}\times \R^n)}\leq C$, and $\|\tilde g\|_{C^{0,1}(\overline{V_2}\times \R^n)}\leq C$.

Let us finally prove that $\psi$ belongs to $C^\alpha(\overline{V_1^+})$. To do it, let $X$ and $\bar X$ be in $\overline{V_1^+}$. Then, we have
\[\psi(X)-\psi(\bar X)=\int_{\R^n} \frac{\Theta(X,\bar X,Z)}{|Z|^{n+2s}}dZ,\]
where
\begin{equation}
\begin{split}
\Theta(X,&\bar X,Z)= \bigl(\phi(X,0)-\phi(X,Z)\bigr)\bigl(\tilde g(X,Z)- \tilde g(X,0)\bigr)\\
&\hspace{15mm} -\bigl(\phi(\bar X,0)-\phi(\bar X,Z)\bigr)\bigl(\tilde g(\bar X,Z)-  \tilde g(\bar X,0)\bigr)\\
&= \bigl(\phi(X,0)-\phi(X,Z)- \phi(\bar X,0)+\phi(\bar X,Z)\bigr)\bigl(\tilde g(X,Z)- \tilde g(X,0)\bigr)
\\& \quad -\bigl(\phi(\bar X,0)-\phi(\bar X,Z)\bigr)\bigl(\tilde g(X,Z)-  \tilde g(X,0) -  \tilde g(\bar X,Z)+ \tilde g(\bar X,0)\bigr).
\end{split}
\end{equation}

Now, on the one hand, it holds
\begin{equation}\label{theta1}|\Theta(X,\bar X,Z)|\leq C|Z|^{1+s},\end{equation}
since $[\phi]_{C^s(\overline{V_2}\times \R^n)}\leq C$ and $\|\tilde g\|_{C^{0,1}(\overline{V_2}\times \R^n)}\leq C$.

On the other hand, it also holds
\begin{equation}\label{theta2}|\Theta(X,\bar X,Z)|\leq C|X-\bar X|^s \min\{|Z|,|Z|^s\}.\end{equation}
Indeed, we only need to observe that
\[\begin{split}
\left|\tilde g(X,Z)-  \tilde g(X,0) -  \tilde g(\bar X,Z)+ \tilde g(\bar X,0)\right| &\leq C\min\bigl\{\min\{|Z|,1\}, |X-\bar X|\bigr\}\\
&\le C \min \{|Z|^{1-s},1\}|X-\bar X|^s.
\end{split}
\]

Thus, letting $r=|X-\bar X|$ and using  \eqref{theta1} and \eqref{theta2}, we obtain
\[
\begin{split}
|\psi(X)-\psi(\bar X)|&\le \int_{\R^n}\frac{|\Theta(X,\bar X,Z)|}{|Z|^{n+2s}}dZ
\\
&\le \int_{B_r} \frac{C|Z|^{1+s}}{|Z|^{n+2s}}dZ + \int_{\R^n\setminus B_r} \frac{C r^s \min\{|Z|,|Z|^s\}}{|Z|^{n+2s}}dZ
\\ &\le C r^{1-s}+C\max\{r^{1-s},r^s\}\,,
\end{split}
\]
as desired.
\end{proof}

Next we prove Proposition \ref{lem_main}.

\begin{proof}[Proof of Proposition \ref{lem_main}]
By considering $u/K$ instead of $u$ we may assume that $K=1$, that is, that $|(-\Delta)^s u|\le 1$ in $\Omega$.
Then, by Claim \ref{Linftybound} we have $\|u\|_{L^\infty(\R^n)}\le C$ for some constant $C$ depending only on $\Omega$ and $s$.

Let $\rho_0>0$ be given by Remark \ref{remrho0}.
Fix $x_0\in \partial \Omega$.
We will prove that there exist constants $C_0>0$, $\rho_1\in(0,\rho_0)$, and $\alpha\in(0,1)$, depending only on $\Omega$ and $s$, and monotone sequences $(m_k)$ and $(M_k)$ such that, for all $k\geq0$,
\begin{equation}\label{eq:prooflem1}
 M_k - m_k =  4^{-\alpha k}\,,\quad -1\le m_k\le m_{k+1}< M_{k+1}\le M_k\le 1\,,
\end{equation}
and
\begin{equation}\label{eq:prooflem2}
 m_k \le C_0^{-1}u/\delta^s \le M_k \quad \mbox{in } D_{R_k}= D_{R_k}(x_0)\,, \quad \mbox{where } R_k = \rho_1 4^{-k}.
\end{equation}
Note that \eqref{eq:prooflem2} is equivalent to the following inequality in $B_{R_k}$ instead of $D_{R_k}$ --- recall that $D_{R_k}= B_{R_k}\cap\Omega$.
\begin{equation}\label{eq:prooflem3}
 m_k \delta_0^s \le C_0^{-1}u \le M_k \delta_0^s \quad \mbox{in } B_{R_k}= B_{R_k}(x_0)\,, \quad \mbox{where } R_k = \rho_1 4^{-k}\,.
\end{equation}

If there exist such sequences, then \eqref{eq:lemmain} holds for all $R\leq \rho_1$ with $C=4^\alpha C_0/\rho_1^\alpha$. 
Then, by increasing the constant $C$ if necessary, \eqref{eq:lemmain} holds also for every $R\leq \rho_0$.

Next we construct $\{M_k\}$ and $\{m_k\}$ by induction.

By Lemma \ref{lem-aux-Cs}, we find that there exist $m_0$ and $M_0$ such that \eqref{eq:prooflem1}  and \eqref{eq:prooflem2} hold for $k=0$ provided we pick $C_0$ large enough depending on $\Omega$ and $s$.

Assume that we have sequences up to $m_k$ and $M_k$. We want to prove that there exist $m_{k+1}$ and $M_{k+1}$ which fulfill the requirements. Let
\begin{equation}\label{123}
u_k = C_0^{-1}u - m_k \delta_0^s\,.
\end{equation}

We will consider the positive part $u_{k}^+$ of $u_k$ in order to have a nonnegative function in all of $\R^n$ to which we can apply Lemmas \ref{lemA} and \ref{lemB}.
Let $u_{k}= u_k^+-u_k^-$. Observe that, by induction hypothesis,
\begin{equation}\label{321}
u_k^+ = u_k  \quad\mbox{and}\quad u_k^-= 0 \quad \mbox{in }B_{R_k}\,.
\end{equation}
Moreover, $C_0^{-1}u \ge m_j\delta_0^s$ in $B_{R_j}$ for each $j\le k$. Therefore, by \eqref{123} we have
\[u_k \ge (m_{j}-m_k) \delta_0^s \ge (m_{j}-M_{j}+M_k-m_k) \delta_0^s \ge (-4^{-\alpha j}  +4^{-\alpha k}) \delta_0^s \quad \mbox{in } B_{R_j}.\]
But clearly $0\le \delta_0^s\le R_{j}^s = \rho_1^s 4^{-js}$ in $B_{R_j}$, and therefore using $R_j=\rho_1 4^{-j}$
\[u_k \ge - \rho_1^{-\alpha}  R_j^s(R_j^\alpha-R_k^\alpha) \quad \mbox{in }\ B_{R_j}\ \mbox{for each}\ \ j\le k\,.\]

Thus, since for every $x\in B_{R_0}\setminus B_{R_k}$ there is $j<k$ such that
\[ |x-x_0|< R_j = \rho_1 4^{-j} \le 4|x-x_0|,\]
we find
\begin{equation}\label{eq:pflem3}
u_{k}(x)\ge - \rho_1^{-\alpha} R_k^{\alpha+s}  \biggl|\frac{4(x-x_0)}{R_k}\biggr|^s \biggl(\biggl|\frac{4(x-x_0)}{R_k}\biggr|^\alpha - 1\biggr) \quad \mbox{outside } B_{R_k}\,.
\end{equation}

By \eqref{eq:pflem3} and \eqref{321}, at $x\in B_{R_k/2}(x_0)$ we have
\[
\begin{split}
0\le -(-\Delta)^s u_{k}^- (x) &= c_{n,s}\int_{x+y\notin B_{R_k}}\frac{u_k^-(x+y)}{|y|^{n+2s}}\,dy\\
&\le c_{n,s}\,\rho_1^{-\alpha}\int_{|y|\ge R_k/2}  R_k^{\alpha+s} \biggl|\frac{8y}{R_k}\biggr|^s \biggl(\biggl|\frac{8y}{R_k}\biggr|^\alpha - 1\biggr) |y|^{-n-2s}\,dy \\
&= C\rho_1^{-\alpha} R_k^{\alpha-s}\int_{|z|\ge 1/2} \frac{|8z|^{s}(|8z|^\alpha-1)}{|z|^{n+2s}}\,dz\\
&\le \varepsilon_0\rho_1^{-\alpha} R_k^{\alpha-s},
\end{split}
\]
where $\varepsilon_0= \varepsilon_0(\alpha)\downarrow 0$ as $\alpha\downarrow 0$ since $|8z|^\alpha\rightarrow 1$.

Therefore, writing $u_k^+=C_0^{-1}u-m_k\delta_0^s+u_k^-$ and using Lemma \ref{lapsdeltas}, we have
\[\begin{split}
|(-\Delta)^s u_{k}^+| &\le C_0^{-1}|(-\Delta)^s u|+ m_k|(-\Delta)^s \delta_0^s| + |(-\Delta)^s (u_k^-)|\\
&\le  (C_0^{-1} + C_\Omega) + \varepsilon_0\rho_1^{-\alpha} R_k^{\alpha -s}\\
&\le \textstyle \bigl(C_1\rho_1^{s-\alpha}+\varepsilon_0\rho_1^{-\alpha}\bigr) R_k^{\alpha-s}\qquad\textrm{ in }D_{R_k/2}.
\end{split}\]
In the last inequality we have just used $R_k\leq\rho_1$ and $\alpha\leq s$.

Now we can apply Lemmas \ref{lemA} and \ref{lemB} with $u$ in its statements replaced by $u_{k}^+$, recalling that
\[\textstyle u_{k}^+ = u_k =  C_0^{-1}u- m_k \delta^s \quad \mbox{in }D_{R_k}\]
to obtain
\begin{eqnarray}
\sup_{D_{\kappa'R_k/2}^+} (C_0^{-1}u/\delta^s-m_k) &\le C \nonumber \biggl(\inf_{D_{\kappa'R_k/2}^+} (C_0^{-1}u/\delta^s-m_k) + \bigl(C_1\rho_1^{s-\alpha} +\varepsilon_0\rho_1^{-\alpha}\bigr) R_k^\alpha \biggr)\\
&\le C \biggl(\inf_{D_{R_k/4}} (C_0^{-1}u/\delta^s-m_k)+ \bigl(C_1\rho_1^{s-\alpha} +\varepsilon_0\rho_1^{-\alpha}\bigr) R_k^\alpha \biggr)\,.\label{eq:pflema1}
\end{eqnarray}

Next we can repeat all the argument ``upside down'', that is, with the functions $u^k = M_k \delta^s - u$ instead of $u_k$. In this way we obtain, instead of \eqref{eq:pflema1}, the following:
\begin{equation}\label{eq:pflema2}
\sup_{D_{\kappa'R_k/2}^+} (M_k - C_0^{-1}u/\delta^s) \le C \biggl(\inf_{D_{R_k/4}} (M^k-C_0^{-1}u/\delta^s)+ \bigl(C_1\rho_1^{s-\alpha} +\varepsilon_0\rho_1^{-\alpha}\bigr) R_k^\alpha \biggr).
\end{equation}

Adding \eqref{eq:pflema1} and \eqref{eq:pflema2} we obtain
\begin{equation}\label{eq:pflema3}
\begin{split}
M_k-m_k &\le C \biggl(\inf_{D_{R_k/4}} (C_0^{-1}u/\delta^s-m_k) + \inf_{D_{R_k/4}} (M_k-C_0^{-1}u/\delta^s) + \bigl(C_1\rho_1^{s-\alpha} +\varepsilon_0\rho_1^{-\alpha}\bigr) R_k^\alpha \biggr)\\
&= C\biggl(\inf_{D_{R_{k+1}}} C_0^{-1}u/\delta^s - \sup_{D_{R_{k+1}}} C_0^{-1}u/\delta^s +M_k-m_k+ \bigl(C_1\rho_1^{s-\alpha} +\varepsilon_0\rho_1^{-\alpha}\bigr) R_k^\alpha \biggr),
\end{split}
\end{equation}
and thus, using that $M_k-m_k = 4^{-\alpha k}$ and $R_k = \rho_1 4^{-k}$,
\[\sup_{D_{R_{k+1}}} C_0^{-1}u/\delta^s - \inf_{D_{R_{k+1}}}  C_0^{-1}u/\delta^s \le \bigl(\textstyle \frac{C-1}{C} +C_1\rho_1^s +\varepsilon_0\bigr) 4^{-\alpha k}\,.\]

Now we choose $\alpha$ and $\rho_1$ small enough so that
\[\frac{C-1}{C} +C_1\rho_1^s +\varepsilon_0(\alpha) \le 4^{-\alpha}.\]
This is possible since $\varepsilon_0(\alpha)\downarrow 0$ as $\alpha\downarrow 0$ and the constants $C$ and $C_1$ do not depend on $\alpha$ nor $\rho_1$ ---they depend only on $\Omega$ and $s$.
Then, we find
\[\sup_{D_{R_{k+1}}} C_0^{-1}u/\delta^s - \inf_{D_{R_{k+1}}}  C_0^{-1}u/\delta^s \le  4^{-\alpha (k+1)},\]
and thus we are able to choose $m_{k+1}$ and $M_{k+1}$ satisfying \eqref{eq:prooflem1} and \eqref{eq:prooflem2}.
\end{proof}

Finally, we give the:

\begin{proof}[Proof of Theorem \ref{thm:v-is-Calpha}]
Define $v=u/\delta^s|_\Omega$ and $K=\|g\|_{L^\infty(\Omega)}$. 
As in the proof of Proposition \ref{lem_main}, by considering $u/K$ instead of $u$ we may assume that $|(-\Delta)^s u|\leq 1$ in $\Omega$ and that $\|u\|_{L^{\infty}(\Omega)}\leq C$ for some constant $C$ depending only on $\Omega$ and $s$.

First we claim that there exist constants $C$, $M>0$, $\widetilde\alpha\in (0,1)$ and $\beta\in(0,1)$, depending only on $\Omega$ and $s$, such that
\begin{itemize}
\item[(i)] $\|v\|_{L^\infty(\Omega)}\le C$.
\item[(ii)] For all $x\in \Omega$, it holds the seminorm bound
\[   [v]_{C^{\beta}(\overline{B_{R/2}(x)})} \le C \left(1+R^{-M}\right),\]
where $R={\rm dist}(x,\mathbb R^n\setminus\Omega)$.
\item[(iii)] For each $x_0\in \partial \Omega$ and for all $\rho>0$ it holds
\[   \sup_{B_{\rho}(x_0)\cap \Omega} v-\inf_{B_{\rho}(x_0)\cap \Omega} v \leq C {\rho}^{\widetilde \alpha}.\]
\end{itemize}

Indeed, it follows from Lemma \ref{lem-aux-Cs} that $\|v\|_{L^\infty(\Omega)}\le C$ for some $C$ depending only on $\Omega$ and $s$.
Hence, (i) is satisfied.

Moreover, if $\beta\in(0,2s)$, it follows from Lemma \ref{lem-sarp-Cs-bounds-u} that for every $x\in \Omega$,
\[
[u]_{C^{\beta}(B_{R/2}(x))} \le CR^{-\beta},\qquad \beta\in(0,2s),
\]
where $R=\delta(x)$.
But since $\Omega$ is $C^{1,1}$, then provided $\delta(x)<\rho_0$ we will have
\[\|\delta^{-s}\|_{L^{\infty}(B_{R/2}(x))}\le CR^{-s}\quad \mbox{and}\quad [\delta^{-s}]_{C^{0,1}(B_{R/2}(x))}\le CR^{-s-1}\]
and hence, by interpolation,
\[[\delta^{-s}]_{C^{\beta}(B_{R/2}(x))}\le CR^{-s-\beta}\]
for each $\beta\in(0,1)$. 
Thus, since $v= u\delta^{-s}$, we find
\[[v]_{C^{\beta}(B_{R/2}(x))} \le C\left(1+R^{-s-\beta}\right)\]
for all $x\in \Omega$ and $\beta<\min\{1,2s\}$. Therefore hypothesis (ii) is satisfied.
The constants $C$ depend only on $\Omega$ and $s$.

In addition, using Proposition \ref{lem_main} and that $\|v\|_{L^\infty(\Omega)}\leq C$, we deduce that hypothesis (iii) is satisfied.

Now, we claim that (i)-(ii)-(iii) lead to
\[  [v]_{C^{\alpha}(\overline \Omega)} \le C,\]
for some ${\alpha}\in(0,1)$ depending only on $\Omega$ and $s$.

Indeed, let $x, y\in \Omega$, $R={\rm dist}(x,\R^n\setminus\Omega) \geq {\rm dist}(y,\R^n\setminus\Omega)$, and $r=|x-y|$.
Let us see that
$|v(x)-v(y)|\leq Cr^{\alpha}$ for some ${\alpha}>0$.

If $r\geq1$ then it follows from (i). Assume $r<1$, and let $p\geq1$ to be chosen later.
Then, we have the following dichotomy:

{\em Case 1.} Assume $r\ge R^p/2$. Let $x_0, y_0 \in \partial \Omega$ be such that $|x-x_0|={\rm dist}(x,\R^n\setminus\Omega)$ and $|y-y_0|={\rm dist}(y,\R^n\setminus\Omega)$.
Then, using (iii) and the definition of $R$ we deduce
\[ |v(x)-v(y)| \le |v(x)-v(x_0)|+|v(x_0)-v(y_0)|+|v(y_0)-v(y)|  \le  C  R^{\widetilde\alpha} \le Cr^{\widetilde\alpha/p}.\]

{\em Case 2.} Assume $r\le R^p/2$.
Hence, since $p\ge 1$, we have $y\in B_{R/2}(x)$. Then, using (ii) we obtain
\[ |v(x)-v(y)| \le C (1+R^{-M}) r^\beta \le  C\left( 1+ r^{-M/p}\right) r^{\beta} \le C r^{\beta- M/p}.\]

To finish the proof we only need to choose $p>M/\beta$ and take ${\alpha}= \min\{\widetilde\alpha/p, \beta -M/p \}$.
\end{proof}

\section{Interior estimates for $u/\delta^s$}
\label{sec5}

The main goal of this section is to prove the $C^\gamma$ bounds in $\Omega$ for the function $u/\delta^s$ in Theorem \ref{thm:int-est-v}.

To prove this result we find an equation for the function $v=u/\delta^s|_\Omega$, that is derived below.
This equation is nonlocal, and thus, we need to give values to $v$ in $\R^n\setminus\Omega$, although we want an equation only in $\Omega$.
It might seem natural to consider $u/\delta^s$, which vanishes outside $\Omega$ since $u\equiv0$ there, as an extension of $u/\delta^s|_\Omega$. However, such extension is discontinuous through $\partial\Omega$, and it would lead to some difficulties.

Instead, we consider a $C^\alpha(\R^n)$ extension of the function $u/\delta^s|_\Omega$, which is $C^\alpha(\overline\Omega)$ by Theorem \ref{thm:v-is-Calpha}. Namely, throughout this section, let $v$ be the $C^\alpha(\R^n)$ extension of $u/\delta^s|_\Omega$ given by Lemma \ref{prop:extension-op-E}.

Let $\delta_0=\delta\chi_\Omega$, and note that $u=v\delta_0^s$ in $\R^n$.
Then, using \eqref{eqlin} we have
\[g(x) = (-\Delta)^s(v\delta_0^s) = v(-\Delta)^s \delta_0^s+ \delta_0^s(-\Delta)^s v -I_s(v,\delta_0^s)\,\]
in $\Omega_{\rho_0}=\{x\in\Omega\,:\,\delta(x)<\rho_0\}$, where
\begin{equation}\label{Is} I_s(w_1,w_2)(x)= c_{n,s}\int_{\R^n} \frac{\bigl(w_1(x)-w_1(y)\bigr)\bigl(w_2(x)-w_2(y)\bigr)}{|x-y|^{n+2s}}\,dy
\end{equation}
and $\rho_0$ is a small constant depending on the domain; see Remark \ref{remrho0}.
Here, we have used that $(-\Delta)^s(w_1w_2)=w_1(-\Delta)^sw_2+w_2(-\Delta)^sw_1-I_s(w_1,w_2)$, which follows easily from \eqref{laps}. This equation is satisfied pointwise in $\Omega_{\rho_0}$, since $g$ is $C^\alpha$ in $\Omega$.
We have to consider $\Omega_{\rho_0}$ instead of $\Omega$  because the distance function is $C^{1,1}$ there and thus we can compute $(-\Delta)^s\delta_0^s$. In all $\Omega$ the distance function $\delta$ is only Lipschitz and hence $(-\Delta)^s\delta_0^s$ is singular for $s\ge\frac12$.

Thus, the following is the equation for $v$:
\begin{equation}\label{equaciov}
(-\Delta)^sv= \frac{1}{\delta_0^s}\biggl( g(x) - v (-\Delta)^s \delta_0^s + I_s(v,\delta_0^s)\biggr) \quad\mbox{in } \Omega_{\rho_0}\,.
\end{equation}

From this equation we will obtain the interior estimates for $v$. More precisely, we will obtain a priori bounds for the interior H\"older norms of $v$, treating
$\delta_0^{-s}I_s(v,\delta_0^s)$ as a lower order term.
For this, we consider the weighted H\"older norms given by Definition \ref{definorm}.

Recall that, in all the paper, we denote $C^\beta$ the space $C^{k,\beta'}$, where $\beta=k+\beta'$ with $k$ integer and $\beta'\in(0,1]$.

In Theorem \ref{thm:v-is-Calpha} we have proved that $u/\delta^s|_{\Omega}$ is $C^{\alpha}(\overline\Omega)$ for some $\alpha\in(0,1)$, with an estimate.
From this $C^\alpha$ estimate and from the equation for $v$ \eqref{equaciov}, we will find next the estimate for $\|u/\delta^s\|_{\gamma;\Omega}^{(-\alpha)}$ stated in  Theorem \ref{thm:int-est-v}.

The proof of this result relies on some preliminary results below.

Next lemma is used to control the lower order term $\delta_0^{-s}I_s(v,\delta_0^s)$ in the equation \eqref{equaciov} for $v$.

\begin{lem} \label{lem:bound-I} Let $\Omega$ be a bounded $C^{1,1}$ domain, and $U\subset \Omega_{\rho_0}$ be an open set. Let $s$ and $\alpha$ belong to $(0,1)$ and satisfy $\alpha+s\le 1$ and $\alpha< s$. Then,
\begin{equation}\label{eq:bound-I}
\|I_s(w,\delta_0^s)\|_{\alpha;U}^{(s-\alpha)}\le C\biggl( [w]_{C^{\alpha}(\R^n)}+ [w]_{\alpha+s;U}^{(-\alpha)}\biggr)\,,
\end{equation}
for all $w$ with finite right hand side.
The constant $C$ depends only on $\Omega$, $s$, and $\alpha$.
\end{lem}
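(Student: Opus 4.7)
The plan is to estimate separately the two pieces of
\[\|I_s(w,\delta_0^s)\|_{\alpha;U}^{(s-\alpha)} = \sup_{x\in U} d_x^{s-\alpha}|I_s(w,\delta_0^s)(x)| + [I_s(w,\delta_0^s)]_{\alpha;U}^{(s-\alpha)},\]
where the seminorm equals $\sup_{x_1,x_2\in U} d_{x_1,x_2}^{s}|I_s(w,\delta_0^s)(x_1)-I_s(w,\delta_0^s)(x_2)|/|x_1-x_2|^\alpha$. I will exploit the global $s$-H\"older bound $|\delta_0^s(x)-\delta_0^s(y)|\le C|x-y|^s$ throughout, combined with the weighted interior regularity of $w$. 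The hypotheses $\alpha<s$ and $\alpha+s\le 1$ ensure convergence of the tail integrals and allow H\"older second differences to replace derivatives.

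For the weighted $L^\infty$ part, fix $x\in U$ and set $d=d_x$. Using the global $s$-H\"older bound on $\delta_0^s$, I reduce to controlling $\int_{\R^n}|w(x)-w(y)|\,|x-y|^{-n-s}\,dy$ and split at $|x-y|=d/2$. On $B_{d/2}(x)\subset U$ one has $d_{x,y}\ge d/2$, so the local seminorm gives $|w(x)-w(y)|\le Cd^{-s}|x-y|^{\alpha+s}[w]_{\alpha+s;U}^{(-\alpha)}$, which integrates to $Cd^{\alpha-s}[w]_{\alpha+s;U}^{(-\alpha)}$. On the complement I apply the global $C^\alpha$ bound, and since $\alpha<s$ the tail integral converges to $Cd^{\alpha-s}[w]_{C^\alpha(\R^n)}$. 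Multiplying by $d^{s-\alpha}$ yields the first half of \eqref{eq:bound-I}.

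For the $\alpha$-seminorm, fix $x_1,x_2\in U$ with $d=d_{x_1,x_2}$ and $r=|x_1-x_2|$; if $r\ge d/2$ the estimate is immediate from the $L^\infty$ bound just obtained and the triangle inequality. For $r<d/2$, the change of variables $y=x_i+z$ gives
\[I_s(w,\delta_0^s)(x_1)-I_s(w,\delta_0^s)(x_2)=c_{n,s}\int_{\R^n}\frac{F(x_1,z)-F(x_2,z)}{|z|^{n+2s}}\,dz,\]
with $F(x,z)=\Delta_zw(x)\,\Delta_z\delta_0^s(x)$ and $\Delta_zf(x)=f(x+z)-f(x)$. I split the $z$-integral into three regions. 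In the near zone $|z|<2r$, the bound $|F(x_i,z)|\le Cd^{-s}|z|^{\alpha+2s}[w]_{\alpha+s;U}^{(-\alpha)}$ integrates to $Cd^{-s}r^\alpha[w]_{\alpha+s;U}^{(-\alpha)}$. In the middle zone $2r\le|z|<d/2$, the translates $x_i+z$ still lie in $U$ with $d_{x_i,x_i+z}\ge d/2$, so applying the local seminorm separately to $w(x_1)-w(x_2)$ and $w(x_1+z)-w(x_2+z)$ yields $|\Delta_zw(x_1)-\Delta_zw(x_2)|\le Cd^{-s}r^{\alpha+s}[w]_{\alpha+s;U}^{(-\alpha)}$; combined with $|\Delta_z\delta_0^s(x_1)-\Delta_z\delta_0^s(x_2)|\le Cr^s$ and the natural size bounds on $\Delta_zw(x_i)$ and $\Delta_z\delta_0^s(x_i)$, this produces an integrand of order $d^{-s}\bigl(r^{\alpha+s}|z|^s+r^s|z|^{\alpha+s}\bigr)|z|^{-n-2s}$, which integrates to $Cd^{-s}r^\alpha[w]_{\alpha+s;U}^{(-\alpha)}$. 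In the far zone $|z|\ge d/2$ only global bounds are available, giving an integrand of order $\bigl(r^\alpha|z|^s+r^s|z|^\alpha\bigr)|z|^{-n-2s}[w]_{C^\alpha(\R^n)}$; the first piece integrates to $Cr^\alpha d^{-s}$ and the second to $Cr^sd^{\alpha-2s}$ which, since $r<d/2$ and $s>\alpha$, is also bounded by $Cr^\alpha d^{-s}$. Multiplying by $d^s/r^\alpha$ closes the seminorm estimate.

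The main obstacle is the far zone of the seminorm estimate: no local regularity is available for $|z|\ge d/2$, and the mixed term $r^s|z|^\alpha|z|^{-n-2s}$ produces $r^sd^{\alpha-2s}$, which can only be absorbed into the target $r^\alpha d^{-s}$ thanks to the reduction $r<d/2$ combined with the strict inequality $\alpha<s$. The other subtle point is arranging the second-difference arguments in the middle zone so that only $(\alpha+s)$-H\"older seminorms of $w$ and $s$-H\"older seminorms of $\delta_0^s$ appear, avoiding derivatives of $w$, which is exactly what $\alpha+s\le 1$ makes possible.
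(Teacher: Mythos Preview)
Your approach is correct in spirit and genuinely different from the paper's. The paper does not argue directly: it first proves a general bilinear estimate (Lemma~\ref{claim:I}) for $I_s(w_1,w_2)$ via a cutoff decomposition $w_i-w_i(x_0)=\bar w_i+\varphi_i$ (localized piece plus tail), splitting the difference $I_s(\cdot)(x_1)-I_s(\cdot)(x_2)$ into four cross terms $J_{11},J_{12},J_{21},J_{22}$; Lemma~\ref{lem:bound-I} then follows by specializing $w_2=\delta_0^s$, $\alpha_2=s$, $\beta=\alpha+s$, after observing that $[\delta_0^s]_{C^s(\R^n)}+[\delta_0^s]_{\alpha+s;U}^{(-s)}\le C(\Omega)$ thanks to $\delta\in C^{1,1}(\Omega_{\rho_0})$. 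Your argument is more direct: you change variables to $y=x+z$ and split the $z$-integral into near/middle/far zones, using only the global $C^s$ bound on $\delta_0^s$. This is more elementary (no cutoffs, no interior $C^{1,1}$ regularity of $\delta$ needed), while the paper's route has the advantage of yielding a reusable bilinear lemma and in fact a slightly stronger conclusion, a $2\alpha$-seminorm bound rather than just $\alpha$.

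One small quantitative slip: with the dichotomy threshold at $r<d/2$, in your near zone $|z|<2r$ the point $x_i+z$ need not satisfy $d_{x_i+z}\ge cd$ (you only get $d_{x_i+z}\ge d-2r$, which can be arbitrarily small as $r\uparrow d/2$), so the claimed bound $|F(x_i,z)|\le Cd^{-s}|z|^{\alpha+2s}[w]_{\alpha+s;U}^{(-\alpha)}$ is not justified there. The fix is trivial: take the case split at $r<d/4$ instead. Then $|z|<2r<d/2$ gives $d_{x_i+z}\ge d/2$ and your near-zone bound goes through verbatim; for $r\ge d/4$ the triangle-inequality reduction to the weighted $L^\infty$ part works exactly as you wrote for $r\ge d/2$.
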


To prove Lemma \ref{lem:bound-I} we need the next

\begin{lem}\label{claim:I}
 Let $U\subset \R^n$ be a bounded open set. Let $\alpha_1,\alpha_2,\in(0,1)$ and $\beta\in(0,1]$ satisfy $\alpha_i < \beta$ for $i=1,2$, $\alpha_1+\alpha_2<2s$, and $s<\beta<2s$.
Assume that  $w_1,w_2\in C^\beta(U)$. Then,
\begin{equation}\label{eq:bound-Iclaim}
\|I_s(w_1,w_2)\|_{2\beta-2s;U}^{(2s-\alpha_1-\alpha_2)}\le C\left( [w_1]_{C^{\alpha_1}(\R^n)}+ [w_1]_{\beta;U}^{(-\alpha_1)}\right)\left( [w_2]_{C^{\alpha_2}(\R^n)}+ [w_2]_{\beta;U}^{(-\alpha_2)}\right),
\end{equation}
for all functions $w_1, w_2$ with finite right hand side. The constant $C$ depends only on $\alpha_1$, $\alpha_2$, $n$, $\beta$, and $s$.
\end{lem}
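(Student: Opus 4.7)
The plan is to bound both the weighted $L^\infty$ part and the weighted Hölder seminorm part of $\|I_s(w_1,w_2)\|_{2\beta-2s;U}^{(2s-\alpha_1-\alpha_2)}$ separately, in each case splitting the defining integral into near and far pieces relative to the base point and bounding the integrand by either the interior seminorms $[w_i]_{\beta;U}^{(-\alpha_i)}$ or the global seminorms $[w_i]_{C^{\alpha_i}(\R^n)}$, as appropriate for the region.

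For the pointwise factor, I would fix $x\in U$ and split the integral defining $I_s(w_1,w_2)(x)$ at $|z-x|=d_x/2$. In the near region, $z\in U$ with $d_{x,z}\ge d_x/2$, so $|w_i(x)-w_i(z)|\le C d_x^{\alpha_i-\beta}|x-z|^\beta [w_i]_{\beta;U}^{(-\alpha_i)}$ and the resulting integral converges using $2\beta>2s$. In the far region, use $|w_i(x)-w_i(z)|\le [w_i]_{C^{\alpha_i}(\R^n)}|x-z|^{\alpha_i}$, with convergence from $\alpha_1+\alpha_2<2s$. Both contributions scale as $Cd_x^{\alpha_1+\alpha_2-2s}$, which is exactly what is needed to control the weight $d_x^{2s-\alpha_1-\alpha_2}|I_s(w_1,w_2)(x)|$.

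For the Hölder seminorm, fix $x,y\in U$, set $r=|x-y|$, and assume $d_x\le d_y$. If $r\ge d_x/8$, apply the pointwise bound to both $I_s(x)$ and $I_s(y)$ and use that $(d_x/r)^{2\beta-2s}$ is bounded. If $r<d_x/8$, write $I_s(x)-I_s(y)=\int_{\R^n}[K(x,z)-K(y,z)]\,dz$ with $K(w,z)=N(w,z)/|w-z|^{n+2s}$ and $N(w,z)=(w_1(w)-w_1(z))(w_2(w)-w_2(z))$, and split the $z$-domain into three annuli relative to $x$: (a) $|z-x|<2r$, (b) $2r\le|z-x|<d_x/2$, and (c) $|z-x|\ge d_x/2$. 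In (a), bound $|K(x,z)|$ and $|K(y,z)|$ separately by interior $C^\beta$ estimates (all of $x,y,z$ lie deep in $U$) and integrate the integrable singularities $|x-z|^{2\beta-n-2s}$ and $|y-z|^{2\beta-n-2s}$ over $B_{2r}(x)$. In (b) and (c), combine the decomposition
\[ K(x,z)-K(y,z)=\frac{N(x,z)-N(y,z)}{|x-z|^{n+2s}}+N(y,z)\left[\frac{1}{|x-z|^{n+2s}}-\frac{1}{|y-z|^{n+2s}}\right], \]
the algebraic identity
\[ N(x,z)-N(y,z)=\bigl(w_1(x)-w_1(y)\bigr)\bigl(w_2(x)-w_2(z)\bigr)+\bigl(w_1(y)-w_1(z)\bigr)\bigl(w_2(x)-w_2(y)\bigr), \]
which extracts the factors $w_i(x)-w_i(y)$ that carry the $r$-smallness, and the standard kernel-difference bound $\bigl||x-z|^{-n-2s}-|y-z|^{-n-2s}\bigr|\le Cr|x-z|^{-n-2s-1}$, valid since $|x-z|\ge 2r$ in both (b) and (c). In (b) all $w_i$-differences are controlled by the interior $C^\beta$ seminorms; in (c) the $w_i(z)$-factors must use the global $C^{\alpha_i}(\R^n)$ seminorms (since $z$ may lie outside $U$), while the $w_i(x)-w_i(y)$ factor still uses the interior $C^\beta$ bound because $d_{x,y}\ge d_x-r\ge 7d_x/8$. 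Integrating in each region yields a contribution bounded by $Cr^{2\beta-2s}d_x^{\alpha_1+\alpha_2-2\beta}$ times one of the four products of seminorms appearing on the right-hand side of \eqref{eq:bound-Iclaim}.

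The main obstacle is the bookkeeping in region (c), where each surviving factor of $(r/d_x)$ must appear with nonnegative exponent. A direct estimate would give the $N$-difference term scaling as $(r/d_x)^{2s-\beta}$ and the kernel-difference term as $(r/d_x)^{1-2\beta+2s}$; both are nonnegative thanks to $\beta<2s$ and to $2\beta-2s\le 1$ (the latter being consistent with $k=0$ in the decomposition $2\beta-2s=k+\beta'$ used to define the seminorm). This is the step that forces the common-$z$ decomposition of $K(x,z)-K(y,z)$ above: naïve term-by-term bounds in the common-$h$ formulation $I(x)-I(y)=\int(\Delta^hw_1(x)\Delta^hw_2(x)-\Delta^hw_1(y)\Delta^hw_2(y))|h|^{-n-2s}dh$ are insufficient, because they fail to exploit the $r$-smallness of the factor $w_i(x)-w_i(y)$ in the far region.
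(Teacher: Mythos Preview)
Your proposal is correct and complete; the exponent checks in region (c) go through exactly as you describe, since the hypotheses $s<\beta<2s$ and $\beta\le 1$ together force $2\beta-2s<1$ strictly (if $s\le 1/2$ use $\beta<2s$, if $s>1/2$ use $\beta\le 1$), so both $(r/d_x)^{2s-\beta}$ and $(r/d_x)^{1-2\beta+2s}$ carry nonnegative exponents.

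Your route differs from the paper's in its organizing decomposition. The paper fixes a center $x_0$, introduces a smooth cutoff $\eta^R$ at scale $R=d_{x_0}/2$, and writes $w_i-w_i(x_0)=\bar w_i+\varphi_i$ with $\bar w_i$ compactly supported in $B_{3R/2}$ and $\varphi_i$ vanishing on $B_R$; the H\"older difference $I_s(w_1,w_2)(x_1)-I_s(w_1,w_2)(x_2)$ then splits into four pieces $J_{11},J_{12},J_{21},J_{22}$ according to whether each factor is $\bar w$ or $\varphi$. The near--near piece $J_{11}$ is handled in the common-offset formulation (your common-$h$), which works there because the $\bar w_i$ are globally $C^\beta$ with the right scaling; the remaining pieces live only in the far region $\{|y-x_0|\ge R\}$ since $\varphi_i\equiv 0$ nearby, and are bounded via the kernel-difference estimate and the global $C^{\alpha_i}$ seminorms. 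Passage from balls $B_{R/2}(x_0)$ to all of $U$ is then done by the covering argument of Step~2 in Lemma~\ref{refined-2s-gain}. By contrast, you avoid the cutoff entirely and decompose the integration domain into the three annuli (a), (b), (c), staying in the common-$z$ formulation throughout and using the algebraic identity for $N(x,z)-N(y,z)$ to isolate the $r$-small factor $w_i(x)-w_i(y)$ in the far region. Your approach is more elementary and self-contained for this lemma; the paper's cutoff template has the advantage of recycling cleanly into the related estimates of Lemma~\ref{refined-2s-gain} and Lemma~\ref{cosaiscalpha}, where higher-order derivatives enter and the localization via $\bar w$ pays off.
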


\begin{proof}
Let $x_0\in U$ and $R=d_{x_0}/2$, and denote $B_{\rho}=B_\rho(x_0)$. Let
\[K=\left( [w_1]_{C^{\alpha_1}(\R^n)}+ [w_1]_{\beta;U}^{(-\alpha_1)}\right)\left( [w_2]_{C^{\alpha_2}(\R^n)}+ [w_2]_{\beta;U}^{(-\alpha_2)}\right)\,.\]

First we bound $|I_s(w_1,w_2)(x_0)|$.
\[
\begin{split}
|I_s(w_1,w_2)(x_0)|&\le C\int_{\R^n}\frac{\bigl|w_1(x_0)-w_1(y)\bigr|\bigl|w_2(x_0)-w_2(y)\bigr|}{|x_0-y|^{n+2s}}\,dy \\
&\le C\int_{B_R(0)} \frac{R^{\alpha_1+\alpha_2-2\beta}[w_1]_{\beta;U}^{(-\alpha_1)}[w_2]_{\beta;U}^{(-\alpha_2)}|z|^{2\beta}}{|z|^{n+2s}}\,dz \ +\\
&\qquad\qquad\qquad\quad+  C\int_{\R^n\setminus B_R(0)} \frac{[w_1]_{C^{\alpha_1}(\R^n)}[w_2]_{C^{\alpha_2}(\R^n)} |z|^{\alpha_1+\alpha_2} }{|z|^{n+2s}}\,dz \\
&\le C R^{\alpha_1+\alpha_2-2s}K \,.
\end{split}
\]

Let $x_1,x_2\in B_{R/2}(x_0)\subset B_{2R}(x_0)$.
Next, we bound $|I_s(w_1,w_2)(x_1)-I_s(w_1,w_2)(x_2)|$.
Let $\eta$ be a smooth cutoff function such that $\eta\equiv 1$ on $B_{1}(0)$ and $\eta\equiv 0$ outside $B_{3/2}(0)$. Define
\[\eta^R(x)=\eta\left(\frac{x-x_0}{R}\right)\quad \mbox{ and  } \quad \bar w_i= \bigr(w_i-w_i(x_0)\bigl)\eta^R\,,\quad i=1,2\,.\]
Note that we have
\[
\|\bar w_i\|_{L^\infty(\R^n)}= \|\bar w_i\|_{L^\infty({B_{3R/2}})} \le \left(\frac{3R}{2}\right)^{\alpha_i} [w_i]_{C^{\alpha_i}(\R^n)}
\]
and
\[
\begin{split}
[\bar w_i]_{C^{\beta}(\R^n)} &\le  C \biggl([w_i]_{C^{\beta}(\overline{B_{3R/2}})}\|\eta\|_{L^\infty(B_{3R/2})}  + \|w_i-w_i(0)\|_{L^\infty(B_{3R/2})} [w_i]_{C^{\beta}(\overline{B_{3R/2}})}\biggr)
\\
&\le C R^{\alpha_i-\beta} \biggl( [w_i]_{C^{\alpha_i}(\R^n)}+ [w_i]_{\beta;U}^{(-\alpha_i)}\biggr)\,.
\end{split}
\]

Let
\[\varphi_i = w_i- w_i(x_0)- \bar w_i\] and observe that $\varphi_i$ vanishes in $B_R$. Hence, $\varphi_i(x_1)= \varphi_i(x_2)=0$,  $i=1,2$.
Next, let us write
\[ I_s(w_1,w_2)(x_1)-I_s(w_1,w_2)(x_2) = c_{n,s}\left( J_{11} + J_{12}+J_{21} +J_{22}\right),\]
where
\[\begin{split}
J_{11} = \int_{\R^n} \frac{\bigl(\bar w_1(x_1)-\bar w_1(y)\bigr)\bigl(\bar w_2(x_1)-\bar w_2(y)\bigr)}{|x_1-y|^{n+2s}}&\,dy\\
&\hspace{-20mm}-\int_{\R^n}\frac{\bigl(\bar w_1(x_2)-\bar w_1(y)\bigr)\bigl(\bar w_2(x_2)-\bar w_2(y)\bigr)}{|x_2-y|^{n+2s}}\,dy\,,
\end{split}\]
\[J_{12} = \int_{\R^n\setminus B_{R}} \frac{-\bigl(\bar w_1(x_1)-\bar w_1(y)\bigr)\varphi_2(y)}{|x_1-y|^{n+2s}}+
\frac{\bigl(\bar w_1(x_2)-\bar w_1(y)\bigr)\varphi_2(y)}{|x_2-y|^{n+2s}}\,dy\,,\]
\[J_{21} = \int_{\R^n\setminus B_{R}} \frac{-\bigl(\bar w_2(x_1)-\bar w_2(y)\bigr)\varphi_1(y)}{|x_1-y|^{n+2s}}+
\frac{\bigl(\bar w_2(x_2)-\bar w_2(y)\bigr)\varphi_1(y)}{|x_2-y|^{n+2s}}\,dy\,,\]
and
\[J_{22} = \int_{\R^n\setminus B_{R}} \frac{\varphi_1(y) \varphi_2(y)}{|x_1-y|^{n+2s}}-
\frac{\varphi_1(y)\varphi_2(y)}{|x_2-y|^{n+2s}}\,dy\,.\]
We now bound separately each of these terms.

{\em Bound of $J_{11}$.} We write $J_{11} = J_{11}^1+J_{11}^2$ where
\[J_{11}^1= \int_{\R^n}\frac{\bigl(\bar w_1(x_1)-\bar w_1(x_1+z)-\bar w_1(x_2)+\bar w_1(x_2+z)\bigr)
\bigl(\bar w_2(x_1)-\bar w_2(x_1+z)\bigr)}{|z|^{n+2s}}\,dz,  \]
\[J_{11}^2= \int_{\R^n}\frac{\bigl(\bar w_1(x_2)-\bar w_1(x_2+z)\bigr)\bigl(\bar w_2(x_1)-\bar w_2(x_1+z)-\bar w_2(x_2)+\bar w_2(x_2+z)\bigr)
}{|z|^{n+2s}}\,dz\,.\]

To bound $|J_{11}^1|$ we proceed as follows
\[
\begin{split}
|J_{11}^1| &\le \int_{B_r(0)} \frac{R^{\alpha_1-\beta}[w_1]_{\beta;U}^{(-\alpha_1)}|z|^{\beta}R^{\alpha_2-\beta}[w_2]_{\beta;U}^{(-\alpha_2)}|z|^{\beta} }{|z|^{n+2s}}\,dz\,+\\
&\hspace{20mm}+ \int_{\R^n\setminus B_r(0)} \frac{R^{\alpha_1-\beta}[w_1]_{\beta;U}^{(-\alpha_1)} r^{\beta} R^{\alpha_2-\beta}[w_2]_{\beta;U}^{(-\alpha_2)}|z|^{\beta} }{|z|^{n+2s}}\,dz  \\
&\le  C R^{\alpha_1+\alpha_2-2\beta}r^{2\beta-2s} K\,.
\end{split}
\]
Similarly, $|J_{11}^2|\le   C R^{\alpha_1+\alpha_2-2\beta}r^{2\beta-2s} K$.

{\em Bound of $J_{12}$ and $J_{21}$.}
We write $J_{12}= J_{12}^1+ J_{12}^2$ where
\[ J_{12}^1 = \int_{\R^n \setminus B_R} -\varphi_2(y) \frac{\bar w_1(x_1)-\bar w_1(x_2)}{|x_1-y|^{n+2s}}\,dy
\]
and
\[ J_{12}^2 =
\int_{\R^n \setminus B_R} -\varphi_2(y) \bigl(\bar w_1(x_2)-\bar w_1(y)\bigr)\left\{\frac{1}{|x_1-y|^{n+2s}}-\frac{1}{|x_2-y|^{n+2s}}\right\}\,dy\,.
\]
To bound $|J_{12}^1|$ we recall that $\varphi_2(x_1)=0$ and proceed as follows
\[
\begin{split}
|J_{12}^1|&\le C\int_{\R^n \setminus B_R} |x_1-y|^{\alpha_2}[\varphi_2]_{C^{0,\alpha_2}(\R^n)} \frac{R^{\alpha_1-\beta} [w_1]_{\beta;U}^{(-\alpha_1)} r^\beta}{|x_1-y|^{n+2s}}\,dy\\
&\leq CR^{\alpha_1+\alpha_2-\beta-2s}r^{\beta} K \le CR^{\alpha_1+\alpha_2-2\beta}r^{2\beta-2s} K.
\end{split}
\]
We have used that $[\varphi_2]_{C^{\alpha_2}(\R^n)}=[w-\bar w]_{C^{\alpha_2}(\R^n)}\leq 2[w]_{C^{\alpha_2}(\R^n)}$, $r\le R$, and $\beta<2s$.

To bound $|J_{12}^2|$, let $\Phi(z)=|z|^{-n-2s}$. Note that, for each $\gamma\in(0,1]$, we have
\begin{equation}\label{boundPhi}|\Phi(z_1-z)-\Phi(z_2-z)| \le C |z_1-z_2|^{\gamma} |z|^{-n-2s-\gamma}\end{equation}
for all $z_1, z_2$ in $B_{R/2}(0)$ and $z\in\R^n\setminus B_R(0)$.
Then, using that $\varphi_2(x_2)=0$,
\[
\begin{split}
|J_{12}^2|&\le C\int_{\R^n \setminus B_R} |x_2-y|^{\alpha_1+\alpha_2}[\varphi_2]_{C^{\alpha_2}(\R^n)}[\varphi_2]_{C^{\alpha_2}(\R^n)}
\frac{|x_1-x_2|^{2\beta-2s}}{|x_2-y|^{n+2\beta}}\,dy\\
&\leq C R^{\alpha_1+\alpha_2-2\beta}r^{2\beta-2s}K\,.
\end{split}
\]

This proves that $|J_{12}|\le C R^{\alpha_1+\alpha_2-2\beta}r^{2\beta-2s}K$. Changing the roles of $\alpha_1$ and $\alpha_2$ we obtain the same bound for $|J_{21}|$.

{\em Bound of $J_{22}$.} Using again $\varphi_i(x_i)=0$, $i=1,2$, we write
\[J_{22} = \int_{\R^n\setminus B_{R}} \bigl(\varphi_1(x_1) -\varphi_1(y)\bigr)\bigl(\varphi_2(x_1) -\varphi_2(y)\bigr)\left(\frac{1}{|x_1-y|^{n+2s}}-\frac{1}
{|x_2-y|^{n+2s}}\right)dy\,.\]
Hence, using again \eqref{boundPhi},
\[
\begin{split}
|J_{22}|&\le C\int_{\R^n \setminus B_R} |x_1-y|^{\alpha_1+\alpha_2}[\varphi_2]_{C^{0,\alpha_2}(\R^n)}[\varphi_2]_{C^{0,\alpha_2}(\R^n)}
\frac{|x_1-x_2|^{2\beta-2s}}{|x_1-y|^{n+2\beta}}\,dy\\
&\leq C R^{\alpha_1+\alpha_2-2\beta}r^{2\beta-2s}K\,.
\end{split}
\]

Summarizing, we have proven that for all $x_0$ such that $d_x=2R$ and for all $x_1,x_2\in B_{R/2}(x_0)$ it holds
\[
|I_s(\delta_0^s,w)(x_0)|\le  C R^{\alpha_1-\alpha_2-2s}K
\]
and
\[\frac{|I_s(\delta_0^s,w)(x_1)-I_s(\delta_0^s,w)(x_2)|}{|x_1-x_2|^{2\beta-2s}}\le C R^{\alpha_1+\alpha_2-2\beta}\bigl( [w]_{\alpha+s;U}^{(-\alpha)}+[w]_{C^\alpha(\R^n)}\bigr)\,.\]
This yields \eqref{eq:bound-Iclaim}, as shown in Step 2 in the proof of Lemma \ref{refined-2s-gain}.
\end{proof}

Next we prove Lemma \ref{lem:bound-I}.

\begin{proof}[Proof of Lemma \ref{lem:bound-I}]
The distance function $\delta_0$ is $C^{1,1}$ in $\overline{\Omega_{\rho_0}}$ and since $U\subset \Omega_{\rho_0}$ we have $d_x\le \delta_0(x)$ for all $x\in U$.
Hence, it follows that
\[[\delta_0^s]_{C^{s}(\R^n)}+ [\delta_0^s]_{\beta;U}^{(-s)} \le C(\Omega,\beta)\]
for all $\beta\in[s,2]$.

Then, applying Lemma \ref{claim:I} with $w_1=w$, $w_2=\delta_0^s$, $\alpha_1=\alpha$, $\alpha_2=s$, and $\beta=s+\alpha$, we obtain
\[\|I_s(w,\delta_0^s)\|_{2\alpha;U}^{(s-\alpha)}\le C\biggl( [w]_{C^{\alpha}(\R^n)}+ [w]_{\alpha+s;U}^{(-\alpha)}\biggr)\,,\]
and hence \eqref{eq:bound-I} follows.
\end{proof}

Using Lemma \ref{lem:bound-I} we can now prove Theorem \ref{thm:int-est-v} and Corollary \ref{krylov}.

\begin{proof}[Proof of Theorem \ref{thm:int-est-v}]
Let $U\subset\subset \Omega_{\rho_0}$.
We prove first that there exist $\alpha\in(0,1)$ and $C$, depending only on $s$ and $\Omega$ ---and not on $U$---, such that
\[\|u/\delta^s\|_{\alpha+2s;U}^{(-\alpha)}\le C\left(\|g\|_{L^\infty(\Omega)}+\|g\|_{\alpha;\Omega}^{(s-\alpha)}\right).\]
Then, letting $U\uparrow \Omega_{\rho_0}$ we will find that this estimate holds in $\Omega_{\rho_0}$ with the same constant.

To prove this, note that by Theorem \ref{thm:v-is-Calpha} we have
\[
\|u/\delta^s\|_{C^\alpha(\overline\Omega)} \le
C\bigl(s,\Omega\bigr)\|g\|_{L^\infty(\Omega)}\,.
\]
Recall that $v$ denotes the $C^\alpha(\R^n)$ extension of $u/\delta^s|_\Omega$ given by Lemma \ref{prop:extension-op-E}, which satisfies $\|v\|_{C^\alpha(\R^n)}=\|u/\delta^s\|_{C^\alpha(\overline\Omega)}$.
Since $u\in C^{\alpha+2s}(\Omega)$ and $\delta\in C^{1,1}(\Omega_{\rho_0})$, it is clear that $\|v\|_{\alpha+2s;U}^{(-\alpha)} <\infty$
---it is here where we use that we are in a subdomain $U$ and not in $\Omega_{\rho_0}$.
Next we obtain an  a priori bound for this seminorm in $U$. To do it, we use the equation \eqref{equaciov} for $v$:
\[(-\Delta)^sv= \frac{1}{\delta^s}\biggl( g(x) - v (-\Delta)^s \delta_0^s + I(\delta_0^s,v)\biggr) \quad\mbox{in } \Omega_{\rho_0}=\{x\in \Omega\,:\,\delta(x)<\rho_0\}\,.\]

Now we will se that this equation and Lemma \ref{refined-2s-gain} lead to an a priori bound for $\|v\|_{\alpha+2s;U}^{(-\alpha)}$. To apply Lemma \ref{refined-2s-gain}, we need to bound $\|(-\Delta)^s v\|_{\alpha;U}^{(2s-\alpha)}$. Let us examine the three terms on the right hand side of the equation.

{\em First term.} Using that
\[d_x= {\rm dist}(x,\partial U)<  {\rm dist}(x,\partial \Omega)=\delta(x)\]
for all $x\in U$ we obtain that, for all $\alpha\le s$,
\[\|\delta^{-s}g\|_{\alpha;U}^{(2s-\alpha)}\le C\bigl(s,\Omega\bigr)\|g\|_{\alpha;\Omega}^{(s-\alpha)}\,.\]

{\em Second term.}  We know from Lemma \ref{lapsdeltas} that, for $\alpha\le \min\{s,1-s\}$,
\[\|(-\Delta)^s \delta_0^s\|_{C^{\alpha}(\overline{\Omega_{\rho_0}})} \le C\bigl( s, \Omega)\,.\]
Hence,
\[
\begin{split}
\|\delta^{-s}v(-\Delta)^s \delta_0^s\|_{\alpha;U}^{(2s-\alpha)} &\le {\rm diam} (\Omega)^s \|\delta^{-s}v(-\Delta)^s \delta_0^s\|_{\alpha;U}^{(s-\alpha)} \le C\bigl(s, \Omega\bigr) \|v\|_{C^\alpha(\R^n)}\\
&\le C\bigl(s,\Omega\bigr)\|g\|_{L^\infty(\Omega)}\,.
\end{split}
\]

{\em Third term.}  From Lemma \ref{lem:bound-I} we know that
\[
\|I(v,\delta_0^s)\|_{\alpha;U}^{(s-\alpha)}\le C(n,s,\alpha)\biggl( \|v\|_{C^\alpha(\R^n)}+ [v]_{\alpha+s;U}^{(-\alpha)}\biggr)\,,
\]
and hence
\[
\begin{split}
\|\delta^{-s}I(v,\delta_0^s)\|_{\alpha;U}^{(2s-\alpha)}&\le C(n,s,\Omega,\alpha)\biggl( \|v\|_{C^\alpha(\R^n)}+ [v]_{\alpha+s;U}^{(-\alpha)}\biggr) \\
&\le C(n,s,\Omega,\alpha,\varepsilon_0)\|v\|_{C^\alpha(\R^n)}+ \varepsilon_0 \|v\|_{\alpha+2s;U}^{(-\alpha)}
\end{split}
\]
for each $\epsilon_0>0$.
The last inequality is by standard interpolation.

Now,  using Lemma \ref{refined-2s-gain} we deduce
\[\begin{split}
\|v\|_{\alpha+2s;U}^{(-\alpha)}&\le C\left(\|v\|_{C^\alpha(\R^n)} + \|(-\Delta)^s v\|_{\alpha;U}^{(2s-\alpha)}\right)\\
&\hspace{-8mm}\leq C\left(\|v\|_{C^\alpha(\R^n)}+\|\delta^{-s}g\|_{\alpha;U}^{(2s-\alpha)}+\|\delta^{-s}v(-\Delta)^s \delta_0^s\|_{\alpha;U}^{(2s-\alpha)}+\|I(v,\delta_0^s)\|_{\alpha;U}^{(s-\alpha)}\right)\\
&\hspace{-8mm}\le C(s,\Omega,\alpha,\varepsilon_0)\left(\|g\|_{L^\infty(\Omega)}+\|g\|_{\alpha;\Omega}^{(s-\alpha)}\right)+ C\varepsilon_0 \|v\|_{\alpha+2s;U}^{(-\alpha)},
\end{split}\]
and choosing $\varepsilon_0$ small enough we obtain
\[\|v\|_{\alpha+2s;U}^{(-\alpha)}\le C\left(\|g\|_{L^\infty(\Omega)}+\|g\|_{\alpha;\Omega}^{(s-\alpha)}\right).\]
Furthermore, letting $U\uparrow\Omega_{\rho_0}$ we obtain that the same estimate holds with $U$ replaced by $\Omega_{\rho_0}$.

Finally, in $\Omega\setminus\Omega_{\rho_0}$ we have that $u$ is $C^{\alpha+2s}$ and $\delta^s$ is uniformly positive and $C^{0,1}$. Thus, we have $u/\delta^s\in C^\gamma(\Omega\setminus\Omega_{\rho_0})$, where $\gamma=\min\{1,\alpha+2s\}$, and the theorem follows.
\end{proof}

Next we give the

\begin{proof}[Proof of Corollary \ref{krylov}]
(a) It follows from Proposition \ref{prop:u-is-Cs} that $u\in C^s(\R^n)$. The interior estimate follow by applying repeatedly Proposition \ref{prop:int-est-u}.

(b) It follows from Theorem \ref{thm:v-is-Calpha} that $u/\delta^s|_\Omega\in C^\alpha(\overline\Omega)$. The interior estimate follows from Theorem \ref{thm:int-est-v}.
\end{proof}

The following two lemmas are closely related to Lemma \ref{claim:I} and are needed in \cite{RS} and in Remark \ref{remviscosity} of this paper.

\begin{lem}\label{cosaiscalpha}
Let $U$ be an open domain and $\alpha$ and $\beta$ be such that $\alpha\le s<\beta$ and $\beta-s$ is not an integer. Let $k$ be an integer such that $\beta= k+\beta'$ with $\beta'\in(0,1]$. Then,
\begin{equation}\label{eq:bound-lap-s/2}
[(-\Delta)^{s/2}w]_{\beta-s;U}^{(s-\alpha)}\le C\bigl( \|w\|_{C^\alpha(\R^n)}+ \|w\|_{\beta;U}^{(-\alpha)}\bigr)\,,
\end{equation}
for all $w$ with finite right hand side.
The constant $C$ depends only on $n$, $s$, $\alpha$, and $\beta$.
\end{lem}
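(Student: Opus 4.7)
The plan is to follow the rescaling-and-localization template already used in the proofs of Lemmas \ref{refined-2s-gain} and \ref{claim:I}. Fix $x_0\in U$ and set $R=d_{x_0}/2$. Define $\tilde w(y)=w(x_0+Ry)$; the homogeneity identity $(-\Delta)^{s/2}\tilde w(y)=R^s\bigl((-\Delta)^{s/2}w\bigr)(x_0+Ry)$, together with the fact that the seminorm $[\,\cdot\,]_{\beta-s;U}^{(s-\alpha)}$ has rescale order $s-\alpha$, shows that \eqref{eq:bound-lap-s/2} will follow from the uniform estimate
\begin{equation}\label{planreduced}
\bigl\|(-\Delta)^{s/2}\tilde w\bigr\|_{C^{\beta-s}(\overline{B_{1/2}})}\le CR^\alpha\bigl(\|w\|_{C^\alpha(\R^n)}+\|w\|_{\beta;U}^{(-\alpha)}\bigr),
\end{equation}
exactly as in Step~2 of the proof of Lemma \ref{refined-2s-gain}. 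Subtracting the constant $\tilde w(0)$ (which $(-\Delta)^{s/2}$ kills), the function $\bar w:=\tilde w-\tilde w(0)$ satisfies the global bounds $|\bar w(y)|\le R^\alpha|y|^\alpha[w]_{C^\alpha(\R^n)}$ and $[\bar w]_{C^\alpha(\R^n)}\le R^\alpha[w]_{C^\alpha(\R^n)}$, together with the interior bound $\|\bar w\|_{C^\beta(\overline{B_{3/2}})}\le CR^\alpha\bigl(\|w\|_{C^\alpha(\R^n)}+\|w\|_{\beta;U}^{(-\alpha)}\bigr)$, the latter following from the definition of the weighted norm and the observation that $x_0+Ry$ lies at distance at least $R/2$ from $\partial U$ when $y\in B_{3/2}$.

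I would then pick a cutoff $\eta\in C^\infty_c(B_{3/2})$ with $\eta\equiv 1$ on $B_1$ and split $\bar w=v_1+v_2$ with $v_1=\eta\bar w$. The piece $v_1\in C^\beta(\R^n)$ is compactly supported with $\|v_1\|_{C^\beta(\R^n)}\le CR^\alpha(\|w\|_{C^\alpha(\R^n)}+\|w\|_{\beta;U}^{(-\alpha)})$. Because $\beta>s$ and $\beta-s$ is not an integer, the classical Hölder mapping property of the half-order fractional Laplacian (see e.g.\ Propositions 2.1.7--2.1.9 in \cite{S}) gives $\|(-\Delta)^{s/2}v_1\|_{C^{\beta-s}(\R^n)}\le C\|v_1\|_{C^\beta(\R^n)}$, handling the $v_1$ contribution to \eqref{planreduced}. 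For the tail $v_2=(1-\eta)\bar w$, which vanishes on $B_1$, the principal value collapses at any $y\in B_{1/2}$ to
\[
(-\Delta)^{s/2}v_2(y)=-c_{n,s/2}\int_{\R^n\setminus B_1}\frac{v_2(z)}{|y-z|^{n+s}}\,dz,
\]
an integral with smooth kernel in $y\in B_{1/2}$. Differentiating under the integral sign and using $|v_2(z)|\le R^\alpha|z|^\alpha[w]_{C^\alpha(\R^n)}$, one concludes that $(-\Delta)^{s/2}v_2$ is $C^\infty$ on $\overline{B_{1/2}}$ with every Hölder seminorm bounded by $CR^\alpha\|w\|_{C^\alpha(\R^n)}$, completing the proof of \eqref{planreduced} and hence of the lemma.

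The main technical obstacle I foresee is the tail analysis in the borderline case $\alpha=s$, where the bound $|v_2(z)|\le R^\alpha|z|^\alpha[w]_{C^\alpha(\R^n)}$ alone makes the integral $\int_{|z|\ge 1}|z|^\alpha|z|^{-n-s}dz$ diverge logarithmically. I would handle it by combining this Hölder bound with the global estimate $|v_2(z)|\le 2\|w\|_{L^\infty(\R^n)}$, splitting the domain of integration at a threshold $|z|=T$ and optimizing in $T$; the resulting logarithmic factor is absorbed using that $R$ is bounded above by $\mathrm{diam}(U)$, at the cost of enlarging the constant $C$.
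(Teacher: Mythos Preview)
Your approach is essentially the paper's: both localize at $x_0$ with a cutoff and split $w-w(x_0)$ into a compactly supported $C^\beta$ piece and a tail vanishing on the inner ball. The paper works directly at scale $R$ (no rescaling) and estimates the near piece by hand, with a case split $\beta'>s$ versus $\beta'<s$ that tracks whether the top H\"older quotient of $(-\Delta)^{s/2}w$ lives at the $k$-th or $(k-1)$-th derivative; you rescale to the unit ball and invoke the global $C^\beta\to C^{\beta-s}$ mapping for the cut-off piece as a black box, which is a clean shortcut.

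The one genuine gap is your treatment of the borderline case $\alpha=s$. Your optimization (split the tail integral at $|z|=T$, use $|v_2|\le 2\|w\|_{L^\infty}$ for $|z|>T$) gives a bound of order $R^s(1+|\log R|)$, and the logarithm blows up as $R\downarrow 0$---precisely the limit that matters for the weighted seminorm---not as $R\to\infty$; so ``$R$ bounded above by $\mathrm{diam}(U)$'' does not absorb it. This is not an artifact: the paper's own Lemma~\ref{remlog} records exactly this pointwise logarithm in $(-\Delta)^{s/2}w$ when $\alpha=s$. The correct fix is to notice that your reduced claim \eqref{planreduced} is stronger than needed: the lemma only asks for the top weighted \emph{seminorm} $[(-\Delta)^{s/2}w]_{\beta-s;U}^{(s-\alpha)}$, so it suffices to control $[(-\Delta)^{s/2}v_2]_{C^{\beta-s}(\overline{B_{1/2}})}$, not its $L^\infty$ norm. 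For that seminorm you bound the gradient (or a H\"older increment) of the tail integral, gaining an extra factor $|z|^{-1}$ in the kernel; the integral $\int_{|z|\ge 1}|z|^{\alpha-n-s-1}\,dz$ then converges for every $\alpha\le s$, and the obstacle disappears.
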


\begin{proof}
Let $x_0\in U$ and $R=d_{x_0}/2$, and denote $B_{\rho}=B_\rho(x_0)$.
Let $\eta$ be a smooth cutoff function such that $\eta\equiv 1$ on $B_{1}(0)$ and $\eta\equiv 0$ outside $B_{3/2}(0)$. Define
\[\eta^R(x)=\eta\left(\frac{x-x_0}{R}\right)\quad \mbox{ and  } \quad \bar w= \bigr(w-w(x_0)\bigl)\eta^R\,.\]
Note that we have
\[
\|\bar w\|_{L^\infty(\R^n)}= \|\bar w\|_{L^\infty(\overline{B_{3R/2}})} \le \left(\frac{3R}{2}\right)^\alpha [w]_{C^\alpha(\R^n)}\,.
\]
In addition, for each $1\le l\le k$
\[
\begin{split}
\|D^l \bar w\|_{L^\infty(\R^n)} &\le C\sum_{m=0}^l \|D^m (w-w(x_0)) D^{l-m} \eta^R\|_{L^\infty(\overline{B_{3R/2}})} \\
&\le C R^{-l+\alpha} \left( [w]_{C^\alpha(\R^n)}+\sum_{m=1}^{l} [w]_{m,U}^{(-\alpha)} \right).
\end{split}
\]
Hence, by interpolation, for each $0\le l\le k-1$
\[
\|D^l \bar w\|_{C^{l+\beta'}(\R^n)}\le C R^{-l-\beta'+\alpha} \left( [w]_{C^\alpha(\R^n)}+\sum_{m=1}^{l} [w]_{m,U}^{(-\alpha)} \right)\,,
\]
and therefore
\begin{equation}\label{Dk-bar-w}
[D^k \bar w]_{C^{\beta'}(\R^n)} \le C R^{-\beta+\alpha}\|w\|_{\beta;U}^{(-\alpha)}\,.
\end{equation}

Let $\varphi = w- w(x_0)- \bar w$ and observe that $\varphi$ vanishes in $B_R$ and, hence, $\varphi(x_1)= \varphi(x_2)=0$.

Next we proceed differently if $\beta'>s$ or if $\beta'< s$. This is because $C^{\beta-s}$ equals either $C^{k,\beta'-s}$ or $C^{k-1,1+\beta'-s}$.

{\em Case 1.} Assume $\beta'> s$. Let $x_1,x_2\in B_{R/2}(x_0)\subset B_{2R}(x_0)$. We want to bound $|D^k (-\Delta)^{s/2} w(x_1)- D^k (-\Delta)^{s/2} w(x_2)|$, where $D^k$ denotes any
$k$-th derivative with respect to a fixed multiindex.
We have
\[ (-\Delta)^{s/2} w = (-\Delta)^{s/2} \bar w + (-\Delta)^{s/2} \varphi \quad \mbox{in }B_{R/2}\,.\]
Then,
\[D^k(-\Delta)^{s/2}w(x_1) - D^k(-\Delta)^{s/2}w(x_2) = c_{n,\frac s2}(J_1 + J_2)\,,\]
where
\[J_1 = \int_{\R^n} \left\{\frac{ D^k \bar w(x_1)- D^k \bar w(y)}{|x_1-y|^{n+s}}-
\frac{D^k \bar w(x_2)- D^k \bar w(y)}{|x_2-y|^{n+s}}\right\}dy\,\]
and
\[J_2 = D^k\int_{\R^n\setminus B_{R}} \frac{-\varphi(y)}{|x_1-y|^{n+s}}\,dy- D^k\int_{\R^n\setminus B_{R}}
\frac{ -\varphi(y)}{|x_2-y|^{n+s}}\,dy\,.\]

To bound $|J_{1}|$ we proceed as follows. Let $r=|x_1-x_2|$. Then, using \eqref{Dk-bar-w},
\[
\begin{split}
|J_{1}|&= \biggl|\int_{\R^n}\frac{D^k\bar w(x_1)- D^k\bar w(x_1+z)-D^k\bar w(x_2)+D^k\bar w(x_2+z)}{|z|^{n+s}}\,dz \biggr| \\
&\le \int_{B_r} \frac{R^{\alpha-\beta}\|w\|_{\beta;U}^{(-\alpha)}  |z|^{\beta'}  }{  |z|^{n+s}  }\,dz
+ \int_{\R^n\setminus B_r} \frac{R^{\alpha-\beta}\|w\|_{\beta;U}^{(-\alpha)} r^{\beta'}}{|z|^{n+s}}\,dz  \\
&\le  CR^{\alpha-\beta}r^{\beta'-s}\|w\|_{\beta;U}^{(-\alpha)}\,.
\end{split}
\]

Let us bound now $|J_2|$.
Writing $\Phi(z)= |z|^{-n-s}$ and using that $\varphi(x_0)=0$,
\[
\begin{split}
|J_{2}|&=
\biggl|\int_{ \R^n\setminus B_{R} } \varphi(y) \bigl( D^k \Phi (x_1-y)- D^k\Phi(x_2-y) \bigr) \,dy \biggr| \\
&\le C\int_{\R^n \setminus B_R} |x_0-y|^\alpha [w]_{C^{\alpha}(\R^n)}
\frac{|x_1-x_2|^{\beta'-s}}{|x_0-y|^{n+\beta}}\,dy \\
&\le C R^{\alpha-\beta} r^{\beta'-s} [w]_{C^{\alpha}(\R^n)},
\end{split}
\]
where we have used that
\[|D^k\Phi(z_1-z)-D^k \Phi(z_2-z)| \le C |z_1-z_2|^{\beta'-s} |z|^{-n-\beta}\]
for all $z_1, z_2$ in $B_{R/2}(0)$ and $z\in\R^n\setminus B_R$.

Hence, we have proved that
\[[(-\Delta)^{s/2}w]_{C^{\beta-s}(\overline{B_R(x_0)})}\leq CR^{\alpha-\beta}\|w\|_{\beta;U}^{(-\alpha)}.\]

{\em Case 2.}  Assume $\beta'< s$. Let $x_1,x_2\in B_{R/2}(x_0)\subset B_{2R}(x_0)$. We want to bound $|D^{k-1} (-\Delta)^{s/2} w(x_1)- D^{k-1} (-\Delta)^{s/2} w(x_2)|$. We proceed as above but we now use
\[
\begin{split}
|D^{k-1}\bar w(x_1)&- D^{k-1}\bar w(x_1+y)-D^{k-1}\bar w(x_2)+D^{k-1}\bar w(x_2+y)|\le \\
&\le
\left|D^{k}\bar w(x_1)-D^{k}\bar w(x_2)\right||y| + |y|^{1+\beta'}
\|\bar w\|_{C^{\beta}(\R^n)} \\
&\le \bigl(|x_1-x_2|^{\beta'} |y|+ |y|^{1+\beta'}\bigr) R^{\alpha-\beta}\|w\|_{\beta;U}^{(-\alpha)}
\end{split}
\]
in $B_r$, and
\[
\begin{split}
|D^{k-1}\bar w(x_1)&- D^{k-1}\bar w(x_1+y)-D^{k-1}\bar w(x_2)+D^{k-1}\bar w(x_2+y)|\le \\
&\le
\left|D^{k}\bar w(x_1)-D^{k}\bar w(x_1+y)\right| |x_1-x_2| + |x_1-x_2|^{1+\beta'}
\|\bar w\|_{C^{\beta}(\R^n)} \\
&\le \bigl(|y|^{\beta'} |x_1-x_2|+ |x_1-x_2|^{1+\beta'}\bigr) R^{\alpha-\beta}\|w\|_{\beta;U}^{(-\alpha)}
\end{split}
\]
in $\R^n\backslash B_r$.
Then, as in Case 1 we obtain $[(-\Delta)^{s/2}w]_{C^{\beta-s}(\overline{B_R(x_0)})}\leq CR^{\alpha-\beta}\|w\|_{\beta;U}^{(-\alpha)}$.

This yields \eqref{eq:bound-lap-s/2}, as in Step 2 of Lemma \ref{refined-2s-gain}.
\end{proof}

Next lemma is a variation of the previous one and gives a pointwise bound for $(-\Delta)^{s/2} w$.
It is used in Remark \ref{remviscosity}.

\begin{lem}\label{remlog} Let $U\subset\R^n$ be an open set, and let $\beta>s$. 
Then, for all $x\in U$
\[|(-\Delta)^{s/2}w(x)|\leq C(\|w\|_{C^s(\R^n)}+\|w\|_{\beta;U}^{(-s)})\biggl(1+|\log{\rm dist}(x,\partial U)|\biggr),\]
whenever $w$ has finite right hand side.
The constant $C$ depends only on $n$, $s$, and $\beta$.
\end{lem}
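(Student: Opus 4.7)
The plan is to estimate $(-\Delta)^{s/2}w(x)$ pointwise by splitting the defining singular integral into three regions determined by the distance $R:=\operatorname{dist}(x,\partial U)$, using the interior weighted seminorm near $x$ and only the global $C^s$ bound far away. Writing the operator in its symmetric form
\[(-\Delta)^{s/2}w(x)=\tfrac{1}{2}c_{n,s/2}\int_{\R^n}\frac{2w(x)-w(x+y)-w(x-y)}{|y|^{n+s}}\,dy,\]
I decompose the $y$-space into $A_1=\{|y|\le R_0/2\}$, $A_2=\{R_0/2<|y|\le 1\}$, and $A_3=\{|y|>1\}$, where $R_0:=\min\{R,1\}$. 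The logarithm in the statement will arise from the middle annulus $A_2$.

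On $A_1$ the points $x\pm y$ lie in $B_{R/2}(x)\subset U$, so the weighted norm $\|w\|_{\beta;U}^{(-s)}$ controls (by the rescaling argument in Step 1 of Lemma \ref{refined-2s-gain}) both $\|D^l w\|_{L^\infty(B_{R/2}(x))}\le CR^{s-l}\|w\|_{\beta;U}^{(-s)}$ for $1\le l\le k$ and $[D^k w]_{C^{\beta'}(B_{R/2}(x))}\le CR^{s-\beta}\|w\|_{\beta;U}^{(-s)}$. Using the cancellation of odd-order terms in the symmetric difference and Taylor expansion to order $\min\{k,2\}$, one obtains
\[|2w(x)-w(x+y)-w(x-y)|\le CR^{s-\beta_*}|y|^{\beta_*}\|w\|_{\beta;U}^{(-s)},\qquad \beta_*:=\min\{\beta,2\}.\]
Since $\beta_*>s$, integrating this against $|y|^{-n-s}$ over $A_1$ yields a factor $R_0^{\beta_*-s}R^{s-\beta_*}\le C$, so the contribution of $A_1$ is at most $C\|w\|_{\beta;U}^{(-s)}$. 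On $A_2$ I use only the global Hölder bound $|2w(x)-w(x+y)-w(x-y)|\le 2\|w\|_{C^s(\R^n)}|y|^s$; the integrand becomes $2\|w\|_{C^s(\R^n)}|y|^{-n}$, and $\int_{A_2}|y|^{-n}dy=\omega_{n-1}\log(2/R_0)\le C(1+|\log R|)$ in both the cases $R\le 1$ and $R>1$. On $A_3$ I bound the second-order difference by $4\|w\|_{L^\infty(\R^n)}\le 4\|w\|_{C^s(\R^n)}$ and integrate $|y|^{-n-s}$ to a constant.

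The only mildly technical step is the estimate on $A_1$, where one must split into the cases $\beta<1$, $1\le\beta\le 2$, and $\beta>2$, using respectively a direct Hölder bound, first-order Taylor with cancellation of $Dw(x)\cdot y$, or second-order Taylor with the $L^\infty$ bound on $D^2w$. In all three cases the same estimate above holds, and the exponent $\beta_*>s$ ensures integrability near the origin. Summing the three contributions then yields
\[|(-\Delta)^{s/2}w(x)|\le C\bigl(\|w\|_{C^s(\R^n)}+\|w\|_{\beta;U}^{(-s)}\bigr)\bigl(1+|\log R|\bigr),\]
which is the claim.
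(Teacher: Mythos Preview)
Your proof is correct and takes a slightly different route from the paper's. The paper first reduces to $\beta<1$, then localizes $w$ by a cutoff: writing $w-w(x_0)=\bar w+\varphi$ with $\bar w$ supported in $B_{3R/2}(x_0)$ and $\varphi\equiv 0$ in $B_R(x_0)$, it splits $(-\Delta)^{s/2}w(x_0)$ into $J_1$ (coming from $\bar w$) and $J_2$ (coming from $\varphi$), bounding each by $C(1+|\log R|)$ times the relevant norm. You instead keep $w$ intact, use the symmetric second-order form of the operator, and split the \emph{domain of integration} into three shells determined by $R_0=\min\{R,1\}$. Both arguments extract the logarithm from the same mechanism: an annulus where only the global $C^s$ bound is available and the integrand behaves like $|y|^{-n}$. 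Your approach is somewhat more direct since it avoids the cutoff machinery and handles all $\beta>s$ at once via $\beta_*=\min\{\beta,2\}$; the paper's localization, on the other hand, fits more naturally with the framework of the preceding Lemma~\ref{cosaiscalpha}. One minor remark: the bounds $\|D^l w\|_{L^\infty(B_{R/2}(x))}\le CR^{s-l}\|w\|_{\beta;U}^{(-s)}$ and $[D^k w]_{C^{\beta'}(B_{R/2}(x))}\le CR^{s-\beta}\|w\|_{\beta;U}^{(-s)}$ follow directly from Definition~\ref{definorm} (since $d_{x'}\ge R/2$ for $x'\in B_{R/2}(x)$), so there is no need to invoke the rescaling in Lemma~\ref{refined-2s-gain}.
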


\begin{proof} We may assume $\beta<1$. Let $x_0\in U$ and $R=d_{x_0}/2$, and define $\bar w$ and $\varphi$ as in the proof of the previous lemma. Then,
\[ (-\Delta)^{s/2} w(x_0) =(-\Delta)^{s/2} \bar w(x_0) +(-\Delta)^{s/2} \varphi(x_0) = c_{n,\frac s2} (J_1+J_2),\]
where
\[
J_1= \int_{\R^n} \frac{\bar w(x_0)-\bar w(x_0+z)}{|z|^{n+s}}\,dz \quad \mbox{and}\quad
J_2= \int_{\R^n\setminus B_R} \frac{-\varphi(x_0+z)}{|z|^{n+s}}\,dz.
\]
With similar arguments as in the previous proof we readily obtain $|J_1|\le C (1+|\log R|) \|w\|_{\beta;U}^{(-s)}$ and $|J_2|\le C (1+|\log R|) \|w\|_{C^s(\R^n)}$.
\end{proof}

\appendix
\section{Basic tools and barriers}

In this appendix we prove Proposition \ref{prop:solution} and Lemmas \ref{prop:subsolution} and \ref{prop:supersolution}. Proposition \ref{prop:solution} is well-known (see \cite{CRS}), but for the sake of completeness we sketch here a proof that uses the Caffarelli-Silvestre extension problem \cite{CSext}.

\begin{proof}[Proof of Proposition \ref{prop:solution}]
Let $(x,y)$ and $(r,\theta)$ be Cartesian and polar coordinates of the plane. 
The coordinate $\theta\in(-\pi,\pi)$ is taken so that $\{\theta=0\}$ on $\{y=0,\  x>0\}$. 
Use that the function $r^s \cos(\theta/2)^{2s}$ is a solution in the half-plane $\{y>0\}$ to the extension problem \cite{CSext},
\[{\rm div}(y^{1-2s}\nabla u) = 0 \quad \mbox{ in} \ \{y>0\},\]
and that its trace on $y=0$ is $\varphi_0$.
\end{proof}

The fractional Kelvin transform has been studied thoroughly in \cite{Bog}.

\begin{prop}[Fractional Kelvin transform]\label{prop:frac-kelvin}
 Let $u$ be a smooth bounded function in $\R^n\setminus\{0\}$. Let $x\mapsto x^*= x/|x|^2$ be the inversion with respect to the unit sphere. Define $u^*(x)= |x|^{2s-n} u(x^*)$. Then,
\begin{equation}\label{eq:frac-kelvin}
(-\Delta)^s u^*(x) = |x|^{-2s-n} (-\Delta)^s u(x^*)\,,
\end{equation}
for all $x\neq 0$.
\end{prop}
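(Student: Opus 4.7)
The plan is to compute $(-\Delta)^s u^*(x)$ directly from its singular integral representation \eqref{laps} and to reduce the identity \eqref{eq:frac-kelvin} to the well-known fact that $|z|^{2s-n}$ (the Riesz kernel) is $s$-harmonic on $\R^n\setminus\{0\}$.

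First, I would record the two elementary identities for the inversion $z\mapsto z^*=z/|z|^2$ that drive the computation: the conformal distance identity
\[ |x-z^*|\,|z| = |x^*-z|\,|x|\qquad \textrm{for all } x,z\in\R^n\setminus\{0\},\]
which one checks by squaring both sides and expanding $|x-z^*|^2 = (|x|^2|z|^2 - 2\, x\!\cdot\! z + 1)/|z|^2$, and the Jacobian identity $dy = |z|^{-2n}\,dz$ for the change of variables $y = z^*$. Combined with $|y|^{2s-n} = |z|^{n-2s}$, these will let me transport the integral over $y$ to an integral over $z$ with kernel expressed in terms of $|x^*-z|$.

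Second, starting from
\[(-\Delta)^s u^*(x) = c_{n,s}\,\mathrm{PV}\!\int_{\R^n}\frac{|x|^{2s-n}u(x^*) - |y|^{2s-n}u(y^*)}{|x-y|^{n+2s}}\,dy,\]
I would substitute $y = z^*$. Using the two identities above and simplifying the exponents of $|z|$ and $|x|$, the integral becomes
\[(-\Delta)^s u^*(x) = c_{n,s}\,|x|^{-n-2s}\,\mathrm{PV}\!\int_{\R^n}\frac{|x|^{2s-n}|z|^{2s-n}u(x^*) - u(z)}{|x^*-z|^{n+2s}}\,dz.\]
Comparing this with $|x|^{-n-2s}(-\Delta)^s u(x^*) = c_{n,s}|x|^{-n-2s}\,\mathrm{PV}\!\int\frac{u(x^*)-u(z)}{|x^*-z|^{n+2s}}\,dz$, the desired equality \eqref{eq:frac-kelvin} reduces to showing
\[u(x^*)\,\mathrm{PV}\!\int_{\R^n}\frac{|x|^{2s-n}|z|^{2s-n}-1}{|x^*-z|^{n+2s}}\,dz = 0.\]
Writing $\xi = x^*$ so that $|x|^{2s-n} = |\xi|^{n-2s}$, the integral is precisely $-|\xi|^{n-2s}c_{n,s}^{-1}(-\Delta)^s f(\xi)$ with $f(z) = |z|^{2s-n}$. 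Since $f$ is the Riesz kernel and satisfies $(-\Delta)^s f \equiv 0$ on $\R^n\setminus\{0\}$ (a classical fact; a proof uses either the explicit Fourier symbol $|\xi|^{2s}$ or direct scaling and rotation invariance arguments, and is contained in \cite{Bog}), this quantity vanishes at $\xi\neq 0$, and \eqref{eq:frac-kelvin} follows.

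The only delicate point is that the integrals above are principal values, so the change of variables and the splitting into the two pieces must be justified with care. The clean way is to perform the substitution on the truncated integral over $\R^n\setminus B_\varepsilon(x)$, observing that the inversion $y=z^*$ maps this to the complement of an image ball whose diameter is comparable to $\varepsilon\,|x|^{-2}$ up to higher order, and then pass to the limit using that $u$ is smooth near $x^*$ (so that both $(-\Delta)^s u^*(x)$ and $(-\Delta)^s f(\xi)$ exist as classical PV integrals at the given point). With this justification, the main obstacle—which is really just bookkeeping—dissolves, and the proposition is proved.
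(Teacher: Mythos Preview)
Your proof is correct and uses the same ingredients as the paper's: the conformal distance identity $|x-z^*|\,|z|=|x^*-z|\,|x|$, the Jacobian $dy=|z|^{-2n}\,dz$, and the $s$-harmonicity of the Riesz kernel $|z|^{2s-n}$ away from the origin. The only difference is in packaging: the paper invokes $(-\Delta)^s|z|^{2s-n}=0$ \emph{first}, subtracting a constant from $u$ so that $u(x_0^*)=0$, after which the change of variables gives \eqref{eq:frac-kelvin} directly with no residual term; you instead carry the full integrand through the change of variables and identify the leftover piece as $-|x|^{-n-2s}u(x^*)\,|\xi|^{n-2s}(-\Delta)^s(|\cdot|^{2s-n})(\xi)$ at the end. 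The paper's normalization is slightly cleaner because it avoids having to split a principal-value integral into two pieces and argue each is well defined, but the content is the same.
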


\begin{proof}
Let $x_0\in\R^n\setminus\{0\}$. By subtracting a constant to $u^*$ and using $(-\Delta)^s |x|^{2s-n}=0$ for $x\neq 0$, we may assume $u^*(x_0)= u(x_0^*)=0$. Recall that
\[|x-y| = \frac{|x^*-y^*|}{|x^*||y^*|}\,.\]
Thus, using the change of variables $z=y^*= y/|y|^2$,
\[
\begin{split}
(-\Delta)^s u^*(x_0) &= c_{n,s}\ \text{PV} \int_{\R^n} \frac{- u^*(y)}{|x_0-y|^{n+2s}}\,dy\\
&= c_{n,s}\ \text{PV} \int_{\R^n} \frac{- |y|^{2s-n} u(y^*)}{|x_0^*-y^*|^{n+2s}} |x_0^*|^{n+2s}|y^*|^{n+2s}\,dy\\
&= c_{n,s} |x_0|^{-n-2s}\ \text{PV} \int_{\R^n} \frac{- |z|^{n-2s} u(z)}{|x_0^*-z|^{n+2s}} |z|^{n+2s}\,|z|^{-2n} dz\\
&= c_{n,s} |x_0|^{-n-2s}\ \text{PV} \int_{\R^n} \frac{-  u(z)}{|x_0^*-z|^{n+2s}} dz\\
&= |x_0|^{-n-2s} (-\Delta)^s u(x_0^*)\,.
\end{split}
\]
\end{proof}

Now, using Proposition \ref{prop:frac-kelvin} we prove Lemma \ref{prop:supersolution}.

\begin{proof}[Proof of Lemma \ref{prop:supersolution}]
Let us denote by $\psi$ (instead of $u$) the explicit solution \eqref{explicitsolution2} to problem \eqref{explicitsolution1} in $B_1$, which satisfies
\begin{equation}\label{supersol-in-ball}
\begin{cases}
(-\Delta)^s\psi = 1\quad&\mbox{in }B_1\\
\psi\equiv 0 &\mbox{in }\R^n \setminus B_1\\
0 <\psi< C(1-|x|)^s &\mbox{in }B_1\,.\\
\end{cases}
\end{equation}

From $\psi$, the supersolution $\varphi_1$ in the exterior of the ball is readily built using the fractional Kelvin transform. Indeed, let $\xi$ be a radial smooth function satisfying $\xi \equiv 1$ in $\R^n\setminus B_5$ and $\xi\equiv 0$ in $B_4$, and define $\varphi_1$ by
\begin{equation}\label{varphi1}
\varphi_1 (x) = C |x|^{2s-n}\psi(1-|x|^{-1}) + \xi(x)\,.
\end{equation}
Observe that $(-\Delta)^s \xi \ge - C_2$ in $B_4$, for some $C_2>0$.  Hence, if we take $C\ge 4^{2s+n}(1+C_2)$, using \eqref{eq:frac-kelvin}, we have
\[(-\Delta)^s\varphi_1 (x) \ge C |x|^{-2s-n} + (-\Delta)^s \xi(x) \ge 1 \quad \mbox{in } B_4 \,.\]
Now it is immediate to verify that $\varphi_1$ satisfies  \eqref{eq:propsupersol} for some $c_1>0$.

To see that $\varphi_1\in H^s_{\rm loc}(\R^n)$ we observe that from \eqref{varphi1} it follows
\[ |\nabla \varphi_1(x)|\le C(|x|-1)^{s-1} \quad \mbox{in }\R^n \setminus B_1\]
and hence, using Lemma \ref{remlog}, we have $(-\Delta)^{s/2}\varphi_1\in L^p_{\rm loc}(\R^n)$ for all $p<\infty$.
\end{proof}

Next we prove Lemma \ref{prop:subsolution}.

\begin{proof}[Proof of Lemma \ref{prop:subsolution}]
We define
\[\psi_1(x)= (1-|x|^2)^s\chi_{B_1}(x)\,.\] 
Since \eqref{explicitsolution2} is the solution of problem \eqref{explicitsolution1}, we have $(-\Delta)^s\psi_1$ is bounded in $B_1$.
Hence, for $C>0$ large enough the function $\psi= \psi_1 + C\chi_{\overline{B_{1/4}}}$ satisfies $(-\Delta)^s \psi\le 0$ in $B_1\setminus \overline{B_{1/4}}$ and it can be used as a viscosity subsolution. 
Note that $\psi$ is upper semicontinuous, as required to viscosity subsolutions, and it satisfies pointwise (if $C$ is large enough)
\[
\begin{cases}
 \psi  \equiv 0 \quad &\mbox{in }\R^n\setminus B_1 \\
(-\Delta)^s \psi  \le 0  &\mbox{in }B_1\setminus \overline{B_{1/4}}\\
\psi  = 1 &\mbox{in }\overline{B_{1/4}}\\
\psi (x)\ge c(1-|x|)^s & \mbox{in } B_1.
\end{cases}
\]

If we want a subsolution which is continuous and $H^s(\R^n)$ we may construct it as follows. We consider the viscosity solution (which is also a weak solution by Remark \ref{remviscosity}) of
\[
\begin{cases}
(-\Delta)^s\varphi_2  = 0  &\mbox{in }B_1\setminus B_{1/4}\\
\varphi_2  \equiv 0 \quad &\mbox{in }\R^n\setminus B_1 \\
\varphi_2  = 1 &\mbox{in }\overline{B_{1/4}}.
\end{cases}
\]
Using $\psi$ as a lower barrier, it is now easy to prove that $\varphi_2$ satisfies \eqref{eq:propsubsol} for some constant $c_2>0$.
\end{proof}

\section*{Acknowledgements}

The authors thank Xavier Cabr\'e for his guidance and useful discussions on the topic of this paper.

\end{document}